\theoremstyle{definition}
\newtheorem{THM}{Theorem}
\newtheorem{LEM}[THM]{Lemma}
\newtheorem{PROP}[THM]{Proposition}
\newtheorem{DEF}[THM]{Definition}
\newtheorem*{THM*}{Theorem}
\newtheorem*{LEM*}{Lemma}
\newtheorem*{PROP*}{Proposition}
\newtheorem*{COR*}{Corollary}
\newtheorem*{DEF*}{Definition}
\newtheorem*{RMK*}{Remark}
\newtheorem*{EX*}{Example}
\numberwithin{figure}{section}
\numberwithin{equation}{section}
\numberwithin{THM}{section}
\title{$A_2$ Skein Representations of Pure Braid Groups}
\author{Wataru Yuasa}
\address{Department of Mathematics\\
  Tokyo Institute of Technology\\
  2-12-1 Ookayama, Meguro-ku, Tokyo 152-8551, Japan}
\email[]{yuasa.w.aa@m.titech.ac.jp}
\subjclass[2010]{20F36, 57M99}
\keywords{Pure braid group; skein theory; $A_2$ web space.}
\begin{document}
\begin{abstract}
We define a family of representations $\{\rho_n\}_{n\geq 0}$ of a pure braid group $P_{2k}$.
These representations are obtained from an action of $P_{2k}$ on a certain type of $A_2$ web space with color $n$. 
The $A_2$ web space is a generalization of the Kauffman bracket skein module of a disk with marked points on its boundary. 
We also introduce a triangle-free basis of such an $A_2$ web space and calculate matrix representations of $\rho_n$ about the standard generators of $P_{2k}$.
\end{abstract}
\maketitle
\tikzset{->-/.style={decoration={
  markings,
  mark=at position #1 with {\arrow[black,thin]{>}}},postaction={decorate}}}
\tikzset{-<-/.style={decoration={
  markings,
  mark=at position #1 with {\arrow[black,thin]{<}}},postaction={decorate}}}
\tikzset{-|-/.style={decoration={
  markings,
  mark=at position #1 with {\arrow[black,thin]{|}}},postaction={decorate}}}
\tikzset{
    triple/.style args={[#1] in [#2] in [#3]}{
        #1,preaction={preaction={draw,#3},draw,#2}
    }
}

\section{Introduction}
The theory of quantum invariants has been developing from the discovery of the Jones polynomials of knots and links \cite{Jones85}. 
The Jones polynomial was reformulated through the linear skein theory, the Kauffman bracket and the Kauffman bracket skein module, by Kauffman~\cite{Kauffman87}. 
The linear skein theory is used to define and calculate many quantum invariants and quantum representations.
The quantum representation of mapping class groups of surfaces are introduced in \cite{BlanchetHabeggerMasbaumVogel95} and \cite{Roberts94}. 
Especially, 
Roberts defined it by using the linear skein theory of $3$-dimensional handlebodies \cite{Lickorish93B} and the recoupling theory, for example, see \cite{KauffmanLins94}. 
It is useful for computation of the quantum representation to translate skein elements into trivalent graphs with colored edges. 
The trivalent graph notation derives many useful formulae of the Kauffman bracket and makes it easy for us to calculate the quantum representations. 
Masbaum~\cite{Masbaum17} proved that the normal closure of powers of half-twists has infinite index in mapping class groups of punctured spheres by the quantum representation of braid groups. 
He used elementary skein theory whereas Stylianakis~\cite{Stylianakis15} used the Jones representation~\cite{Jones87}. 

In this paper, 
we introduce quantum representations of a pure braid group $P_{2k}$ of $2k$ strands by using skein theory of the $A_2$ bracket and the $A_2$ web spaces introduced by Kuperberg~\cite{Kuperberg96}. 
In the case of $A_1$, 
the skein theory corresponds to the theory of the Kauffman bracket and the Kauffman bracket skein modules. 
Our quantum representations are obtained by action of $P_{2k}$ on a clasped $A_2$ web space $W_{2k}(n^{\pm})$ and as a family $\{\rho_n\}_{n\geq 0}$ colored by non-negative integers. 
Moreover, 
we define a special basis called {\em triangle-free basis} of $W_{2k}(n^{\pm})$ by using trivalent graph notation of the $A_2$ webs. 
This basis is given with each choice of a colored triangle-free triangulation~\cite{AdinFirerRoichman10} of a $2k$-gon.
For a certain colored triangle-free triangulation $T_0$, 
we completely calculate the values of the matrix representation $\rho_n^{T_0}$ on the standard generators of $P_{2k}$ by using the $A_2$ skein theory. 

The paper is organized as follows. 
In Section~2, 
we introduce the $A_2$ web spaces and review known formulae of the $A_2$ bracket and new formulae used in the following sections. 
In Section~3, 
we consider a clasped $A_2$ web $W_{2k}(n^{\pm})$ on which we will act $P_{2k}$.
For each colored triangle-free triangulation of $2k$-gon, 
we give an ordered basis and we call it the triangle-free basis of $W_{2k}(n^{\pm})$.
In Section~4, 
we define the quantum representation $\rho_{n}\colon P_{2k}\to GL(W_{2k}(n^{\pm}))$. 
This representation has a natural matrix representation with respect to a triangle-free basis of $W_{2k}(n^{\pm}$. 
Finally, 
we compute the matrix representation $\rho_{n}^{T_0}$ for a special colored triangle-free triangulation $T_0$, 
but it enables to calculate matrix representations of every triangle-free basis. 
In this computation, 
we completely give a matrix of $\rho_{n}^{T_0}(A_{i,j})$ for any standard generator $A_{i,j}$ of $P_{2k}$.

\section{The $A_2$ web space}
We consider a unit disk $D$, a set of marked points $P=\{\,p_1, p_2, \dots, p_{k}\,\}$ on its boundary and a set of signs $\epsilon=\{\,\epsilon_1,\epsilon_2,\dots,\epsilon_{k}\,\}$. 
The marked points are arranged cyclically $p_1<p_2<\dots<p_k<p_1$ with respect to the orientation of $\partial D$ and each $p_i$ labeled by $\epsilon_i={+}\text{ or }{-}$. 
Let us denote such triple $(D,P,\epsilon)$ by $D(\epsilon_1,\dots,\epsilon_{k})$ or $D_{\epsilon}$.
A {\em bipartite uni-trivalent graph} $G$ is a directed graph such that every vertex is either trivalent or univalent and these vertices are divided into sinks or sources. 
A sink (resp. source) is a vertex such that all edges adjoining to the vertex point into (resp. away from) it.
A {\em bipartite trivalent graph} $G$ in $D_{\epsilon}$ is an embedding of a uni-trivalent graph into $D_{\epsilon}$ such that for any vertex $v$ has the following neighborhoods:
\begin{itemize}
\item \,\tikz[baseline=-.6ex]{
\draw [thin, dashed, fill=white] (0,0) circle [radius=.5];
\draw[-<-=.5] (0:0) -- (90:.5); 
\draw[-<-=.5] (0:0) -- (210:.5); 
\draw[-<-=.5] (0:0) -- (-30:.5);
\node (v) at (0,0) [above right]{$v$};
\fill (0,0) circle [radius=1pt];
}\, 
or 
\,\tikz[baseline=-.6ex]{
\draw [thin, dashed] (0,0) circle [radius=.5];
\clip (0,0) circle [radius=.5];
\draw [thin, fill=white] (0,-.5) rectangle (.5,.5);
\draw[-<-=.5] (0,0)--(.5,0);
\node (p) at (0,0) [left]{${+}$};
\node at (0,0) [above right]{$v$};
\draw [fill=cyan] (0,0) circle [radius=1pt];
}\, if $v$ is a sink,
\item \tikz[baseline=-.6ex]{
\draw [thin, dashed, fill=white] (0,0) circle [radius=.5];
\draw[->-=.5] (0:0) -- (90:.5); 
\draw[->-=.5] (0:0) -- (210:.5); 
\draw[->-=.5] (0:0) -- (-30:.5);
\node (v) at (0,0) [above right]{$v$};
\fill (0,0) circle [radius=1pt];
}\,
or \,\tikz[baseline=-.6ex]{
\draw [thin, dashed] (0,0) circle [radius=.5];
\clip (0,0) circle [radius=.5];
\draw [thin, fill=white] (0,-.5) rectangle (.5,.5);
\draw[->-=.5] (0,0)--(.5,0);
\node (p) at (0,0) [left]{${-}$};
\node at (0,0) [above right]{$v$};
\draw [fill=cyan] (0,0) circle [radius=1pt];
}\, if $v$ is a source.
\end{itemize}
An {\em $A_2$ basis web} is the boundary-fixing isotopy class of a bipartite trivalent graph $G$ in $D_{\epsilon}$, 
where any internal face of $D\setminus G$ has at least six sides. 
We denote the set of $A_2$ basis webs in $D_{\epsilon}$ by $B(\epsilon_0,\dots,\epsilon_{m-1})$ or $B_\epsilon$.
For example,
$B{(+,-,+,-,+,-)}$ has the following $A_2$ basis webs:
\[
\,\begin{tikzpicture}
\draw [thin, fill=white] (0,0) circle [radius=.5];
\draw[-<-=.5] (0:.5) -- (180:.5);
\draw[->-=.5] (60:.5) to[out=-120, in=-60] (120:.5);
\draw[-<-=.5] (240:.5) to[out=60, in=120] (300:.5);
\foreach \i in {0,1,...,6} \draw[fill=cyan] ($(0,0) !1! \i*60:(.5,0)$) circle [radius=1pt];
\node at (0:.5) [right]{$\scriptstyle{+}$};
\node at (60:.5) [right]{$\scriptstyle{-}$};
\node at (120:.5) [left]{$\scriptstyle{+}$};
\node at (180:.5) [left]{$\scriptstyle{-}$};
\node at (240:.5) [left]{$\scriptstyle{+}$};
\node at (300:.5) [right]{$\scriptstyle{-}$};
\end{tikzpicture}\,,
\,\begin{tikzpicture}
\begin{scope}[rotate=60]
\draw [thin, fill=white] (0,0) circle [radius=.5];
\draw[->-=.5] (0:.5) -- (180:.5);
\draw[-<-=.5] (60:.5) to[out=-120, in=-60] (120:.5);
\draw[->-=.5] (240:.5) to[out=60, in=120] (300:.5);
\foreach \i in {0,1,...,6} \draw[fill=cyan] ($(0,0) !1! \i*60:(.5,0)$) circle [radius=1pt];
\end{scope}
\node at (0:.5) [right]{$\scriptstyle{+}$};
\node at (60:.5) [right]{$\scriptstyle{-}$};
\node at (120:.5) [left]{$\scriptstyle{+}$};
\node at (180:.5) [left]{$\scriptstyle{-}$};
\node at (240:.5) [left]{$\scriptstyle{+}$};
\node at (300:.5) [right]{$\scriptstyle{-}$};
\end{tikzpicture}\,,
\,\begin{tikzpicture}
\begin{scope}[rotate=-60]
\draw [thin, fill=white] (0,0) circle [radius=.5];
\draw[->-=.5] (0:.5) -- (180:.5);
\draw[-<-=.5] (60:.5) to[out=-120, in=-60] (120:.5);
\draw[->-=.5] (240:.5) to[out=60, in=120] (300:.5);
\foreach \i in {0,1,...,6} \draw[fill=cyan] ($(0,0) !1! \i*60:(.5,0)$) circle [radius=1pt];
\end{scope}
\node at (0:.5) [right]{$\scriptstyle{+}$};
\node at (60:.5) [right]{$\scriptstyle{-}$};
\node at (120:.5) [left]{$\scriptstyle{+}$};
\node at (180:.5) [left]{$\scriptstyle{-}$};
\node at (240:.5) [left]{$\scriptstyle{+}$};
\node at (300:.5) [right]{$\scriptstyle{-}$};
\end{tikzpicture}\,,
\,\begin{tikzpicture}
\draw [thin, fill=white] (0,0) circle [radius=.5];
\draw[-<-=.5] (0:.5) to[out=180, in=-120] (60:.5);
\draw[rotate=120, -<-=.5] (0:.5) to[out=180, in=-120] (60:.5);
\draw[rotate=240, -<-=.5] (0:.5) to[out=180, in=-120] (60:.5);
\foreach \i in {0,1,...,6} \draw[fill=cyan] ($(0,0) !1! \i*60:(.5,0)$) circle [radius=1pt];
\node at (0:.5) [right]{$\scriptstyle{+}$};
\node at (60:.5) [right]{$\scriptstyle{-}$};
\node at (120:.5) [left]{$\scriptstyle{+}$};
\node at (180:.5) [left]{$\scriptstyle{-}$};
\node at (240:.5) [left]{$\scriptstyle{+}$};
\node at (300:.5) [right]{$\scriptstyle{-}$};
\end{tikzpicture}\,,
\,\begin{tikzpicture}
\begin{scope}[rotate=60]
\draw [thin, fill=white] (0,0) circle [radius=.5];
\draw[->-=.5] (0:.5) to[out=180, in=-120] (60:.5);
\draw[rotate=120, ->-=.5] (0:.5) to[out=180, in=-120] (60:.5);
\draw[rotate=240, ->-=.5] (0:.5) to[out=180, in=-120] (60:.5);
\foreach \i in {0,1,...,6} \draw[fill=cyan] ($(0,0) !1! \i*60:(.5,0)$) circle [radius=1pt];
\end{scope}
\node at (0:.5) [right]{$\scriptstyle{+}$};
\node at (60:.5) [right]{$\scriptstyle{-}$};
\node at (120:.5) [left]{$\scriptstyle{+}$};
\node at (180:.5) [left]{$\scriptstyle{-}$};
\node at (240:.5) [left]{$\scriptstyle{+}$};
\node at (300:.5) [right]{$\scriptstyle{-}$};
\end{tikzpicture}\,,
\,\begin{tikzpicture}
\draw [thin, fill=white] (0,0) circle [radius=.5];
\draw[-<-=.5] (0:.5) -- (0:.3);
\draw[rotate=60, ->-=.5] (0:.5) -- (0:.3);
\draw[rotate=120, -<-=.5] (0:.5) -- (0:.3);
\draw[rotate=180, ->-=.5] (0:.5) -- (0:.3);
\draw[rotate=240, -<-=.5] (0:.5) -- (0:.3);
\draw[rotate=300, ->-=.5] (0:.5) -- (0:.3);
\draw[->-=.5] (0:.3) -- (60:.3);
\draw[rotate=60, -<-=.5] (0:.3) -- (60:.3);
\draw[rotate=120, ->-=.5] (0:.3) -- (60:.3);
\draw[rotate=180, -<-=.5] (0:.3) -- (60:.3);
\draw[rotate=240, ->-=.5] (0:.3) -- (60:.3);
\draw[rotate=300, -<-=.5] (0:.3) -- (60:.3);
\foreach \i in {0,1,...,6} \draw[fill=cyan] ($(0,0) !1! \i*60:(.5,0)$) circle [radius=1pt];
\node at (0:.5) [right]{$\scriptstyle{+}$};
\node at (60:.5) [right]{$\scriptstyle{-}$};
\node at (120:.5) [left]{$\scriptstyle{+}$};
\node at (180:.5) [left]{$\scriptstyle{-}$};
\node at (240:.5) [left]{$\scriptstyle{+}$};
\node at (300:.5) [right]{$\scriptstyle{-}$};
\end{tikzpicture}\,.
\]
The {\em $A_2$ web space $W_\epsilon$} is the $\mathbb{Q}(q^{\frac{1}{6}})$-vector space spanned by $B_\epsilon$. 
A {\em tangled trivalent graph diagram} in $D_\epsilon$ is an immersed bipartite uni-trivalent graph in $D_\epsilon$ whose intersection points are only transverse double points of edges with crossing data 
\,\tikz[baseline=-.6ex, scale=.8]{
\draw [thin, dashed, fill=white] (0,0) circle [radius=.5];
\draw[->-=.8] (-45:.5) -- (135:.5);
\draw[->-=.8, white, double=black, double distance=0.4pt, ultra thick] (-135:.5) -- (45:.5);
}\, or 
\,\tikz[baseline=-.6ex, scale=.8]{
\draw [thin, dashed, fill=white] (0,0) circle [radius=.5];
\draw[->-=.8] (-135:.5) -- (45:.5);
\draw[->-=.8, white, double=black, double distance=0.4pt, ultra thick] (-45:.5) -- (135:.5);
}\, .
Tangled trivalent graph diagrams $G$ and $G'$ are regularly isotopic if $G$ is obtained from $G'$ by a finite sequence of boundary-fixing isotopies and (R1'), (R2), (R3) and (R4) moves with some direction of edges (see Fig.~\ref{Reidemeister}).
\begin{figure}
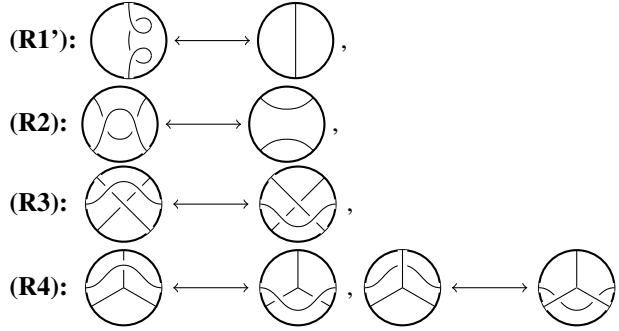

\begin{description}
\item[(R1')]
\tikz[baseline=-.6ex]{
\draw [thick] (0,0) circle [radius=.5];
\draw (.3,-.2)
to[out=south, in=east] (.2,-.3)
to[out=west, in=south] (.0,.0)
to[out=north, in=west] (.2,.3)
to[out=east, in=north] (.3,.2);
\draw[white, double=black, double distance=0.4pt, ultra thick] (0,-.5) 
to[out=north, in=west] (.2,-.1)
to[out=east, in=north] (.3,-.2);
\draw[white, double=black, double distance=0.4pt, ultra thick] (.3,.2)
to[out=south, in=east] (.2,.1)
to[out=west, in=south] (0,.5);
}
\tikz[baseline=-.6ex]{
\draw [<->, xshift=1.5cm] (1,0)--(2,0);
}
\tikz[baseline=-.6ex]{
\draw[xshift=3cm, thick] (0,0) circle [radius=.5];
\draw[xshift=3cm] (90:.5) to (-90:.5);
}\ ,
\item[(R2)]
\tikz[baseline=-.6ex]{
\draw [thick] (0,0) circle [radius=.5];
\draw (135:.5) to [out=south east, in=west](0,-.2) to [out=east, in=south west](45:.5);
\draw [white, double=black, double distance=0.4pt, ultra thick](-135:.5) to [out=north east, in=left](0,.2) to [out=right, in=north west] (-45:.5);
}
\tikz[baseline=-.6ex]{
\draw [<->, xshift=1.5cm] (1,0)--(2,0);
}
\tikz[baseline=-.6ex]{
\draw[xshift=3cm, thick] (0,0) circle [radius=.5];
\draw[xshift=3cm] (135:.5) to [out=south east, in=west](0,.2) to [out=east, in=south west](45:.5);
\draw[xshift=3cm] (-135:.5) to [out=north east, in=left](0,-.2) to [out=right, in=north west] (-45:.5);
}\ ,
\item[(R3)]
\tikz[baseline=-.6ex]{
\draw[thick] (0,0) circle [radius=.5];
\draw (-135:.5) -- (45:.5);
\draw[white, double=black, double distance=0.4pt, ultra thick] (135:.5) -- (-45:.5);
\draw[white, double=black, double distance=0.4pt, ultra thick](180:.5) to [out=right, in=left](0,.3) to [out=right, in=left] (-0:.5);
}
\tikz[baseline=-.6ex]{
\draw[<->, xshift=1.5cm] (1,0)--(2,0);
}
\tikz[baseline=-.6ex]{
\draw [xshift=3cm, thick] (0,0) circle [radius=.5];
\draw [xshift=3cm] (-135:.5) -- (45:.5);
\draw [xshift=3cm, white, double=black, double distance=0.4pt, ultra thick] (135:.5) -- (-45:.5);
\draw[xshift=3cm, white, double=black, double distance=0.4pt, ultra thick](180:.5) to [out=right, in=left](0,-.3) to [out=right, in=left] (-0:.5);
}\ ,
\item[(R4)]
\tikz[baseline=-.6ex]{
\draw[thick] (0,0) circle [radius=.5];
\draw (0:0) -- (90:.5); 
\draw (0:0) -- (210:.5); 
\draw (0:0) -- (-30:.5);
\draw[white, double=black, double distance=0.4pt, ultra thick](180:.5) to [out=right, in=left](0,.3) to [out=right, in=left] (-0:.5);
}
\tikz[baseline=-.6ex]{
\draw[<->, xshift=1.5cm] (1,0)--(2,0);
}
\tikz[baseline=-.6ex]{
\draw[thick] (0,0) circle [radius=.5];
\draw (0:0) -- (90:.5); 
\draw (0:0) -- (210:.5); 
\draw (0:0) -- (-30:.5);
\draw[white, double=black, double distance=0.4pt, ultra thick](180:.5) to [out=right, in=left](0,-.3) to [out=right, in=left] (-0:.5);
}\ , 
\tikz[baseline=-.6ex]{
\draw[thick] (0,0) circle [radius=.5];
\draw[white, double=black, double distance=0.4pt, ultra thick](180:.5) to [out=right, in=left](0,.3) to [out=right, in=left] (-0:.5);
\draw[white, double=black, double distance=0.4pt, ultra thick] (0:0) -- (90:.5); 
\draw (0:0) -- (210:.5); 
\draw (0:0) -- (-30:.5);
}
\tikz[baseline=-.6ex]{
\draw[<->, xshift=1.5cm] (1,0)--(2,0);
}
\tikz[baseline=-.6ex]{
\draw[thick] (0,0) circle [radius=.5];
\draw[white, double=black, double distance=0.4pt, ultra thick](180:.5) to [out=right, in=left](0,-.3) to [out=right, in=left] (-0:.5);
\draw (0:0) -- (90:.5); 
\draw[white, double=black, double distance=0.4pt, ultra thick] (0:0) -- (210:.5); 
\draw[white, double=black, double distance=0.4pt, ultra thick] (0:0) -- (-30:.5);
}.
\end{description}
\caption{The Reidemeister moves for tangled trivalent graph diagrams}
\label{Reidemeister}
\end{figure}
{\em Tangled trivalent graphs} in $D_\epsilon$ are regular isotopy classes of tangled trivalent graph diagrams in $D_\epsilon$. 
We denote $T_\epsilon$ the set of tangled trivalent graphs in $D_\epsilon$.
The diagram below is an example of a tangled trivalent graph diagram in $D{({+},{-},{+},{+},{+})}$. 
\begin{center}
\begin{tikzpicture}
\draw [thin, fill=white] (0,0) circle [radius=1];
\draw[-<-=.5] (0:1) -- (.7,0);
\draw[->-=.8] (.7,0) to[out=west, in=south east](144:1);
\draw[->-=.8] [white, double=black, double distance=0.4pt, ultra thick]
(72:1) -- (216:1);
\draw[->, rotate=-90] (.3,.4) to[out=west, in=north] (.1,.1) to[out=south, in=west] (.3,-.2);
\draw[white, double=black, double distance=0.4pt, ultra thick] 
(288:1) to[out=north west, in=west] (.4,.3);
\draw[-<-=.2] (.4,.3) to[out=east, in=north] (.7,0);
\draw[rotate=-90, white, double=black, double distance=0.4pt, ultra thick] (.3,.4) to[out=east, in=north] (.5,.1) to[out=south, in=east] (.3,-.2);
\foreach \i in {0,1,...,5} 
\draw[fill=cyan] ($(0,0) !1! \i*72:(1,0)$) circle [radius=1pt];
\node at (0:1) [right]{$\scriptstyle{+}$}
node at (72:1) [above]{$\scriptstyle{-}$} 
node at (144:1) [above left]{$\scriptstyle{+}$} 
node at (216:1) [below left]{$\scriptstyle{+}$} 
node at (288:1) [below]{$\scriptstyle{+}$};
\end{tikzpicture}
\end{center}
\begin{DEF}[The $A_2$ bracket~\cite{Kuperberg96}]
We define a $\mathbb{Q}(q^{\frac{1}{6}})$-linear map $\langle\,\cdot\,\rangle_3\colon\mathbb{Q}(q^{\frac{1}{6}})T_\epsilon\to W_\epsilon$ by the following.
\begin{itemize}
\item 
$\Big\langle\,\tikz[baseline=-.6ex]{
\draw [thin, dashed, fill=white] (0,0) circle [radius=.5];
\draw[->-=.8] (-45:.5) -- (135:.5);
\draw[->-=.8, white, double=black, double distance=0.4pt, ultra thick] (-135:.5) -- (45:.5);
}\,\Big\rangle_{\! 3}
=
q^{\frac{1}{3}}\Big\langle\,\tikz[baseline=-.6ex]{
\draw[thin, dashed, fill=white] (0,0) circle [radius=.5];
\draw[->-=.5] (-45:.5) to [out=north west, in=south](.2,0) to [out=north, in=south west](45:.5);
\draw[->-=.5] (-135:.5) to [out=north east, in=south](-.2,0) to [out=north, in=south east] (135:.5);
}\,\Big\rangle_{\! 3}
-
q^{-\frac{1}{6}}\Big\langle\,\tikz[baseline=-.6ex]{
\draw[thin, dashed, fill=white] (0,0) circle [radius=.5];
\draw[->-=.5] (-45:.5) -- (0,-.2);
\draw[->-=.5] (-135:.5) -- (0,-.2);
\draw[-<-=.5] (0,-.2) -- (0,.2);
\draw[-<-=.5] (45:.5) -- (0,.2);
\draw[-<-=.5] (135:.5) -- (0,.2);
}\,\Big\rangle_{\! 3}
$,
\item 
$\Big\langle\,\tikz[baseline=-.6ex]{
\draw [thin, dashed, fill=white] (0,0) circle [radius=.5];
\draw[->-=.8] (-135:.5) -- (45:.5);
\draw[->-=.8, white, double=black, double distance=0.4pt, ultra thick] (-45:.5) -- (135:.5);
}\,\Big\rangle_{\! 3}
=
q^{-\frac{1}{3}}\Big\langle\,\tikz[baseline=-.6ex]{
\draw[thin, dashed, fill=white] (0,0) circle [radius=.5];
\draw[->-=.5] (-45:.5) to [out=north west, in=south](.2,0) to [out=north, in=south west](45:.5);
\draw[->-=.5] (-135:.5) to [out=north east, in=south](-.2,0) to [out=north, in=south east] (135:.5);
}\,\Big\rangle_{\! 3}
-
q^{\frac{1}{6}}\Big\langle\,\tikz[baseline=-.6ex]{
\draw[thin, dashed, fill=white] (0,0) circle [radius=.5];
\draw[->-=.5] (-45:.5) -- (0,-.2);
\draw[->-=.5] (-135:.5) -- (0,-.2);
\draw[-<-=.5] (0,-.2) -- (0,.2);
\draw[-<-=.5] (45:.5) -- (0,.2);
\draw[-<-=.5] (135:.5) -- (0,.2);
}\,\Big\rangle_{\! 3}
$,
\item 
$\Big\langle\,\tikz[baseline=-.6ex, rotate=90]{
\draw[thin, dashed, fill=white] (0,0) circle [radius=.5];
\draw[->-=.6] (-45:.5) -- (-45:.3);
\draw[->-=.6] (-135:.5) -- (-135:.3);
\draw[-<-=.6] (45:.5) -- (45:.3);
\draw[->-=.6] (135:.5) -- (135:.3);
\draw[->-=.5] (45:.3) -- (135:.3);
\draw[-<-=.5] (-45:.3) -- (-135:.3);
\draw[->-=.5] (45:.3) -- (-45:.3);
\draw[-<-=.5] (135:.3) -- (-135:.3);
}\,\Big\rangle_{\! 3}
=
\Big\langle\,\tikz[baseline=-.6ex, rotate=90]{
\draw[thin, dashed, fill=white] (0,0) circle [radius=.5];
\draw[->-=.5] (-45:.5) to [out=north west, in=south](.2,0) to [out=north, in=south west](45:.5);
\draw[-<-=.5] (-135:.5) to [out=north east, in=south](-.2,0) to [out=north, in=south east] (135:.5);
}\,\Big\rangle_{\! 3}
+
\Big\langle\,\tikz[rotate=90, baseline=-.6ex, rotate=90]{
\draw[thin, dashed, fill=white] (0,0) circle [radius=.5];
\draw[-<-=.5] (-45:.5) to [out=north west, in=south](.2,0) to [out=north, in=south west](45:.5);
\draw[->-=.5] (-135:.5) to [out=north east, in=south](-.2,0) to [out=north, in=south east] (135:.5);
}\,\Big\rangle_{\! 3}
$,
\item 
$\Big\langle\,\tikz[baseline=-.6ex, rotate=-90]{
\draw[thin, dashed, fill=white] (0,0) circle [radius=.5];
\draw[->-=.5] (0,-.5) -- (0,-.25);
\draw[->-=.5] (0,.25) -- (0,.5);
\draw[-<-=.5] (0,-.25) to [out=60, in=120, relative](0,.25);
\draw[-<-=.5] (0,-.25) to [out=-60, in=-120, relative](0,.25);
}\,\Big\rangle_{\! 3}
=
\left[2\right]\Big\langle\,\tikz[baseline=-.6ex, rotate=-90]{
\draw[thin, dashed, fill=white] (0,0) circle [radius=.5];
\draw[->-=.5] (0,-.5) -- (0,.5);
}\,\Big\rangle_{\! 3}
$,
\item 
$
\Big\langle G\sqcup
\,\tikz[baseline=-.6ex]{
\draw[thin, dashed, fill=white] (0,0) circle [radius=.5];
\draw[->-=.5] (0,0) circle [radius=.3];
}\,\Big\rangle_{\! 3}
=
\left[3\right] \langle G\rangle_{3}
$.
\end{itemize}
We can confirm that this map is well-defined, that is, the map is invariant under the Reidemeister moves.
\end{DEF}

We next consider a $A_2$ web space $W{(n_1^{\epsilon_1},n_2^{\epsilon_2},\dots,n_l^{\epsilon_l})}$ whose marked point $p$ such that $p_{n_1+n_2+\dots+n_j+1} \leq p \leq p_{n_1+n_2+\dots+n_{j+1}}$ is decorated with $\epsilon_{j+1}$ for $j=0,1,\dots, l-1$. 
The index of $p_{n_1+n_2+\dots+n_j+1}$ is $p_1$ if $j=0$.
We define the $A_2$ clasp
$\Big\langle\tikz[baseline=-.6ex]{
\draw[->-=.8] (-.5,0) -- (.5,0);
\draw[fill=white] (-.1,-.3) rectangle (.1,.3);
\node at (.1,0) [above right]{${\scriptstyle n}$};
}\,\Big\rangle_3\in W_{(n^{+},n^{-})}
$ 
and introduce well-known properties of it. (See, for example, \ref{Kuperberg96} and \ref{OhtsukiYamada97}.) 
\begin{DEF}(The $A_2$ clasp of type $n$)
the $A_2$ clasp 
$\Big\langle
\tikz[baseline=-.6ex]{
\draw[->-=.8] (-.5,0) -- (.5,0);
\draw[fill=white] (-.1,-.3) rectangle (.1,.3);
\node at (.1,0) [above right]{${\scriptstyle n}$};}
\,\Big\rangle$ 
is defined inductively as follows. 
For $n=1$,
\[\Big\langle
\tikz[baseline=-.6ex]{
\draw[->-=.8] (-.5,0) -- (.5,0);
\draw[fill=white] (-.1,-.3) rectangle (.1,.3);
\node at (.1,0) [above right]{${\scriptstyle 1}$};}
\,\Big\rangle_3
=
\,
\tikz[baseline=-.6ex]{
\draw[->-=.5] (-.5,0) -- (.5,0) node at (0,0) [above]{${\scriptstyle 1}$};}
\,\in W_{(1^{+},1^{-})}\]
For any $n>1$,
\[
\Big\langle
\tikz[baseline=-.6ex]{
\draw[->-=.8] (-.5,0) -- (.5,0);
\draw[fill=white] (-.1,-.3) rectangle (.1,.3);
\node at (.1,0) [above right]{${\scriptstyle n}$};}
\,\Big\rangle_3
=
\bigg\langle\,
\tikz[baseline=-.6ex]{
\draw[->-=.8] (-.5,.1) -- (.5,.1);
\draw[->-=.5] (-.5,-.4) -- (.5,-.4);
\draw[fill=white] (-.1,-.2) rectangle (.1,.4);
\node at (0,0) [above right]{${\scriptstyle n-1}$};
\node at (.2,-.4) [above right]{${\scriptstyle 1}$};}
\,\bigg\rangle_{\! 3}
-\frac{\left[n-1\right]}{\left[n\right]}
\bigg\langle\,
\tikz[baseline=-.6ex]{
\draw[->-=.5] (-.9,.1) -- (-.4,.1);
\draw[->-=.5] (-.3,.2) -- (.3,.2);
\draw[->-=.5] (.4,.1) -- (.9,.1);
\draw[->-=.5] (-.9,-.4) 
to[out=east, in=west] (-.3,-.4) 
to[out=east, in=south] (-.1,-.2);
\draw[-<-=.8] (-.1,-.2) 
to[out=north, in=east] (-.3,0);
\draw[-<-=.5] (.9,-.4) 
to[out=west, in=east] (.3,-.4)
to[out=west, in=south] (.1,-.2);
\draw[->-=.8] (.1,-.2)
to[out=north, in=west] (.3,0);
\draw[fill=white] (-.4,-.2) rectangle (-.3,.4);
\draw[fill=white] (.3,-.2) rectangle (.4,.4);
\draw[-<-=.5] (-.1,-.2) -- (.1,-.2);
\node at (-.3,0) [above left]{${\scriptscriptstyle n-1}$};
\node at (.3,0) [above right]{${\scriptscriptstyle n-1}$};
\node at (0,.1) [above]{${\scriptscriptstyle n-2}$};
\node at (-.6,-.3) {${\scriptscriptstyle 1}$};
\node at (.6,-.3) {${\scriptscriptstyle 1}$};
\node at (0,-.2) [below]{${\scriptscriptstyle 1}$};
\node at (-.3,0) [right]{${\scriptscriptstyle 1}$};
\node at (.3,0) [left]{${\scriptscriptstyle 1}$};}
\,\bigg\rangle_{\! 3} \in W(n^{+},n^{-})\]
\end{DEF}

The $A_2$ clasps have the following properties.
\begin{LEM}[Properties of $A_2$ clasps]
For any positive integer $n$,
\begin{itemize} 
\item $\Big\langle\,\tikz[baseline=-.6ex]{
\draw[->-=.5] (-.6,0) -- (.6,0);
\draw[fill=white] (-.4,-.3) rectangle (-.2,.3);
\draw[fill=white] (.2,-.3) rectangle (.4,.3);
\node at (-.3,0) [above right]{${\scriptstyle n}$};
}\,\Big\rangle_{\! 3}
=
\Big\langle\,\tikz[baseline=-.6ex]{
\draw[->-=.8] (-.5,0) -- (.5,0);
\draw[fill=white] (-.1,-.3) rectangle (.1,.3);
\node at (0,0) [above right] {${\scriptstyle n}$};
}\,\Big\rangle_3
$,
\item $\Big\langle\,\tikz[baseline=-.6ex]{
\draw[->-=.5] (-.5,0) -- (-.1,0);
\draw[->-=.5] (0,.2) -- (.5,.2);
\draw[->-=.5] (0,-.2) -- (.5,-.2);
\draw[->-=.5] (0,.1) 
to[out=east, in=north] (.3,0);
\draw[->-=.5] (0,-.1)
to[out=east, in=south] (.3,0);
\draw[-<-=.5] (.3,0) -- (.5,0);
\draw[fill=white] (-.2,-.3) rectangle (0,.3);
\node at (.4,0) [right] {${\scriptstyle 1}$};
\node at (.4,.2) [right] {${\scriptstyle n-k-2}$};
\node at (.4,-.2) [right] {${\scriptstyle k}$};
}\,\Big\rangle_{\! 3}=0$\quad ($k=0,1,\dots,n-2$).
\end{itemize}
\end{LEM}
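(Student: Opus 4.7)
The plan is to prove both identities by a simultaneous induction on $n$, establishing the annihilation identity first and using it to derive the idempotency. For brevity I write $C_n$ for the $A_2$ clasp of type $n$, and $X_n$ for the subtracted diagram on the right-hand side of the recursive definition, so that $C_n = C_{n-1}\otimes\mathrm{id} - \tfrac{[n-1]}{[n]}X_n$. For the base case $n=1$, the clasp is the identity strand by definition, so the first identity is immediate; the second is vacuous because the index set $\{0,1,\dots,n-2\}$ is empty.

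For the inductive step, assume both identities hold for clasps of width strictly less than $n$. To prove the annihilation, substitute the recursive definition into the left-hand side and attach the trivalent cap to the right boundary, then split into cases according to whether the cap touches the single strand not covered by the inner $C_{n-1}$ of the first summand. In the \emph{easy case}, the cap lies entirely within the strands of some $(n-1)$-clasp in each summand: for the first summand this is clear, and for $X_n$ one slides the cap past the interior trivalent vertices using the $A_2$ square relation (the third defining identity of $\langle\,\cdot\,\rangle_3$), after which the inductive annihilation applied to the relevant $C_{n-1}$ kills each summand.

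The \emph{hard case}, and the main obstacle, is when the cap involves the distinguished strand. There the $C_{n-1}\otimes\mathrm{id}$ summand acquires a small digon which reduces by the bigon relation (the fourth defining identity of $\langle\,\cdot\,\rangle_3$) to $[2]$ times an $(n-1)$-clasp diagram; the $X_n$ summand, after similar digon collapses followed by the inductive annihilation on the neighbouring $C_{n-1}$'s, contributes another scalar multiple of the same $(n-1)$-clasp diagram. The coefficient $\tfrac{[n-1]}{[n]}$ in the recursive definition is calibrated precisely so that these two contributions cancel, using the quantum-integer identity $[2][n-1] = [n]+[n-2]$.

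With the annihilation at level $n$ established, the idempotency follows by expanding the upper of two stacked clasps. The first summand $(C_{n-1}\otimes\mathrm{id})\circ C_n$ equals $C_n$: expanding $C_n$ a second time, the outer $(n-1)$-clasp is absorbed into the $(n-1)$-clasp already present in both terms of the expansion by the inductive idempotency. The second summand $X_n\circ C_n$ contains a trivalent cap attached to the top boundary of $C_n$, which vanishes by the annihilation just proved at level $n$. Hence $C_n\circ C_n = C_n$, completing the induction.
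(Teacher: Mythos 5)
The paper itself offers no proof of this lemma---it is quoted as a known property of $A_2$ clasps with a pointer to Kuperberg and Ohtsuki--Yamada---so the only question is whether your induction stands on its own. Its architecture is the standard one and much of it is fine: the base case, the easy case of the annihilation (where both legs of the cap land on outputs of an $(n-1)$-clasp and induction applies directly---note that no ``square relation'' or sliding is needed there, since in $X_n$ the top $n-1$ outputs are exactly the outputs of the right-hand $(n-1)$-clasp), and the deduction of idempotency from annihilation at level $n$. (In that last step, be careful which factor of $C_n\circ C_n$ you expand: one choice leaves a cap on the \emph{input} side of $C_n$, for which you would need the mirror form of the annihilation or the symmetry of the clasp.)

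The genuine gap is in the hard case of the annihilation, which is the crux of the whole lemma. Your claim that the summand $C_{n-1}\otimes\mathrm{id}$ followed by the boundary cap ``acquires a small digon which reduces by the bigon relation to $[2]$ times an $(n-1)$-clasp diagram'' is false: attaching a single trivalent vertex to the last output of $C_{n-1}$ and the free strand creates no closed two-sided face, and that diagram---call it $D$---does not simplify at all. The actual content of the hard case is to show that the $X_n$ summand equals $\frac{[n]}{[n-1]}\,D$, so that the prefactor $\frac{[n-1]}{[n]}$ yields exact cancellation. Writing $X_n=(C_{n-1}\otimes\mathrm{id})\,E\,(C_{n-1}\otimes\mathrm{id})$ with $E$ the cap--cup pair on the last two strands, the boundary cap turns the right-hand factor into a one-strand partial closure of $C_{n-1}$ through two trivalent vertices, and evaluating that closure is where the work lies: one must expand \emph{that} inner $(n-1)$-clasp once more by the recursion, whereupon its leading term produces the digon (contributing $[2]$), its correction term produces a genuine square face to which the third defining relation applies, the inductive idempotency and annihilation of $C_{n-2}$ kill one resolution, and the survivor contributes $-\frac{[n-2]}{[n-1]}$; only then does $[2]-\frac{[n-2]}{[n-1]}=\frac{[n]}{[n-1]}$ appear. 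As written, the two ``contributions'' you propose to cancel are not scalar multiples of a common diagram under the reductions you describe, so the cancellation does not follow from your steps; you have correctly named the target identity $[2][n-1]=[n]+[n-2]$, but the computation that produces it is missing.
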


We also define the $A_2$ clasp in $W{(m^{+},n^{-},n^{+},m^{-})}$ according to Ohtsuki and Yamada~\cite{OhtsukiYamada97}.
\begin{DEF}[the $A_2$ clasp in $(m,n)$]\label{doubleA2clasp}
\[\Big\langle\,
\tikz[baseline=-.6ex, scale=.8]{
\draw[-<-=.8] (-.6,.4) -- (.6,.4);
\draw[->-=.8] (-.6,-.4) -- (.6,-.4);
\draw[fill=white] (-.1,-.6) rectangle (.1,.6);
\draw (-.1,.0) -- (.1,.0);
\node at (.4,-.6)[right]{$\scriptstyle{m}$};
\node at (-.4,-.6)[left]{$\scriptstyle{m}$};
\node at (.4,.6)[right]{$\scriptstyle{n}$};
\node at (-.4,.6)[left]{$\scriptstyle{n}$};
}\,\Big\rangle_3
=
\sum_{k=0}^{\min\{m,n\}}
(-1)^k
\frac{{n\brack k}{m\brack k}}{{n+m+1\brack k}}
\Big\langle\,\tikz[baseline=-.6ex]{
\draw
(-.4,.4) -- +(-.2,0)
(.4,-.4) -- +(.2,0)
(-.4,-.4) -- +(-.2,0)
(.4,.4) -- +(.2,0);
\draw[-<-=.5] (-.4,.5) -- (.4,.5);
\draw[->-=.5] (-.4,-.5) -- (.4,-.5);
\draw[-<-=.5] (-.4,.3) to[out=east, in=east] (-.4,-.3);
\draw[->-=.5] (.4,.3) to[out=west, in=west] (.4,-.3);
\draw[fill=white] (.4,-.6) rectangle +(.1,.4);
\draw[fill=white] (-.4,-.6) rectangle +(-.1,.4);
\draw[fill=white] (.4,.6) rectangle +(.1,-.4);
\draw[fill=white] (-.4,.6) rectangle +(-.1,-.4);
\node at (.4,-.6)[right]{$\scriptstyle{m}$};
\node at (-.4,-.6)[left]{$\scriptstyle{m}$};
\node at (.4,.6)[right]{$\scriptstyle{n}$};
\node at (-.4,.6)[left]{$\scriptstyle{n}$};
\node at (0,.5)[above]{$\scriptstyle{n-k}$};
\node at (0,-.5)[below]{$\scriptstyle{m-k}$};
\node at (-.2,0)[left]{$\scriptstyle{k}$};
\node at (.2,0)[right]{$\scriptstyle{k}$};
}\Big\rangle_3\,\in W(m^{+},n^{-},n^{-},m^{+}).
\]
\end{DEF}
We remark that the $A_2$ clasp in $W(m^{-},n^{+},n^{-},m^{+})$ is also defined by the above recursive formula with opposite directed webs.

\begin{LEM}[Property of $A_2$ clasps of type $(m,n)$]\label{doubleA2claspprop}
\[
\big\langle\,\tikz[baseline=-.6ex]{
\draw[-<-=.5] (-.5,.2) -- (-.1,.2);
\draw[->-=.5] (-.5,-.2) -- (-.1,-.2);
\draw[-<-=.5] (0,.2) -- (.5,.2);
\draw[->-=.5] (0,-.2) -- (.5,-.2);
\draw[-<=.5] (0,.1) 
to[out=east, in=north] (.3,0);
\draw (0,-.1)
to[out=east, in=south] (.3,0);
\draw[fill=white] (-.2,-.3) rectangle (0,.3);
\draw (-.2,.0) -- (.0,.0);
\node at (.4,0) [right] {${\scriptstyle 1}$};
\node at (.4,.2) [right] {${\scriptstyle n-1}$};
\node at (.4,-.2) [right] {${\scriptstyle m-1}$};
}\,\big\rangle_3=0 .
\]
\end{LEM}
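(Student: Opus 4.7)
The plan is to expand the $(m,n)$-clasp box via its recursive definition (Definition~\ref{doubleA2clasp}) and reduce the computation to the single-clasp uncappability given in the previous Lemma. Writing
\[
P_{m,n} = \sum_{k=0}^{\min\{m,n\}} (-1)^k \frac{{n\brack k}{m\brack k}}{{n+m+1\brack k}}\, E_k,
\]
where $E_k$ denotes the $k$-arc diagram bracketed by single $A_2$ clasps of weights $n$ on top and $m$ on bottom, the left-hand side of the lemma becomes $\sum_k c_k \bigl\langle E_k \cdot Y \bigr\rangle_3$, where $Y$ is the attached trivalent cap. The goal is to show this sum vanishes.

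For each fixed $k$, I would first isotope the trivalent vertex of $Y$ toward the right-hand single clasps of $E_k$. Because the two input strands to the trivalent vertex come from different clasps (the top weight-$n$ clasp and the bottom weight-$m$ clasp), the previous Lemma does not immediately annihilate $E_k \cdot Y$. Instead, the strategy is to use the bigon $[2]$-rule and the triangle/$H$-rule of the $A_2$ web space to rewrite the $k$-arc region, so that locally a Y-fusion appears on one of the two single clasps (giving zero by the previous Lemma), plus residual webs proportional to $E_{k-1}$ and $E_{k+1}$ with explicit scalar coefficients in quantum integers $[k]$, $[n-k]$, $[m-k]$, and $[n+m+1-k]$.

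With each $\bigl\langle E_k \cdot Y\bigr\rangle_3$ rewritten as a linear combination of $\bigl\langle E_{k\pm 1}\bigr\rangle_3$ with explicit coefficients, the weighted sum telescopes by a Pascal-type identity for quantum binomial coefficients, essentially the identity already built into the $c_k$. The main obstacle is the diagrammatic bookkeeping required to pass the trivalent vertex across the single clasps and through the $k$-arc region: this demands several applications of (R4)-style isotopy together with the $[2]$- and $[3]$-relations, while precisely tracking the accumulating quantum scalars. A cleaner alternative route, which I would attempt in parallel, is induction on $\min\{m,n\}$ using the recursive structure of Definition~\ref{doubleA2clasp}: the base case ($m=1$ or $n=1$) essentially collapses the $(m,n)$-clasp to a single clasp plus correction, so the previous Lemma applies directly and fixes the telescoping coefficients needed for the inductive step.
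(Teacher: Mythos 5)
The paper offers no proof of this lemma at all: it is quoted as a well-known property of the $(m,n)$-clasp with a pointer to Kuperberg and Ohtsuki--Yamada, so there is nothing in-paper to compare your argument against. Your overall plan --- expand the clasp via the defining sum in Definition~\ref{doubleA2clasp}, reduce each term $E_k$ composed with the attached web to a standard family, and check that the alternating sum of the coefficients $c_k=(-1)^k{n\brack k}{m\brack k}/{n+m+1\brack k}$ vanishes through a quantum-binomial identity --- is the correct skeleton and matches the proof in the cited literature. But as written the proposal has genuine gaps, and the first is a misreading of the statement: the attached web cannot be a trivalent vertex. The innermost strand of the $n$-group and the innermost strand of the $m$-group are oppositely oriented relative to any putative vertex (one group is directed into the clasp, the other out of it), so an all-in or all-out $A_2$ vertex cannot receive one leg from each; the figure is the U-turn (cup) joining the two adjacent strands across the $m$/$n$ divide. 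Consequently the step in which ``a Y-fusion appears on one of the two single clasps, giving zero by the previous Lemma'' never occurs: the single-clasp annihilation lemma concerns a $Y$ on two same-oriented strands of one clasp and does not apply verbatim to the cup configuration.

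The second and larger gap is that the entire content of the lemma lives in the local computation you defer. One must actually show $\langle E_k\cdot(\mathrm{cup})\rangle=\alpha_kF_k+\beta_kF_{k+1}$ for an explicit family $F_j$ of webs with boundary data $(m,n;\,n-1,m-1)$ --- note that $E_{k\pm1}$ itself has the wrong boundary, so ``proportional to $E_{k\pm1}$'' is not even type-correct --- compute $\alpha_k,\beta_k$ from the partial-trace relations for single clasps, and then verify $c_k\beta_k+c_{k+1}\alpha_{k+1}=0$. Neither the coefficients nor the ``Pascal-type identity'' are identified, so nothing is verified. Finally, the fallback route by induction on $\min\{m,n\}$ does not get off the ground: Definition~\ref{doubleA2clasp} is a closed-form sum over $k$, not a recursion in $m$ or $n$, so it supplies no inductive step, and even the base case $n=1$ already requires the same partial-trace computation rather than a direct appeal to the single-clasp lemma.
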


The clasped $A_2$ web space $W((m_1^{\epsilon_1}, n_1^{\epsilon'_1}),(m_2^{\epsilon_2}, n_2^{\epsilon'_2}),\dots,(m_k^{\epsilon_k}, n_k^{\epsilon'_k}))$ is a subspace of $W(m_1^{\epsilon_1}, n_1^{\epsilon'_1},\dots,m_k^{\epsilon_k}, n_k^{\epsilon'_k})$. 
An $A_2$ web in the subspace is clasped by the $A_2$ clasp of type $(m_j^{\epsilon_j},n_j^{\epsilon'_j})$ at near the marked points decorated by $m_j^{\epsilon_j}$ and $n_j^{\epsilon'_j}$ where $\epsilon\neq\epsilon'$ and $j=1,2,\dots,k$.

\begin{DEF}[A clasped $A_2$ web space]
Let $m_j, n'_j$ be non-negative integers and $\epsilon_j\neq\epsilon'_j$ signs for $j=1,2,\dots,k$. 
We define a subspace $W((m_1^{\epsilon_1}, n_1^{\epsilon'_1}),(m_2^{\epsilon_2}, n_2^{\epsilon'_2}),\dots,(m_k^{\epsilon_k}, n_k^{\epsilon'_k}))$ of $W(m_1^{\epsilon_1}, n_1^{\epsilon'_1},\dots,m_k^{\epsilon_k}, n_k^{\epsilon'_k})$ called the {\em clasped $A_2$ web space} as follows:
\begin{align*}
&W((m_1^{\epsilon_1}, n_1^{\epsilon'_1}),(m_2^{\epsilon_2}, n_2^{\epsilon'_2}),\dots,(m_k^{\epsilon_k}, n_k^{\epsilon'_k}))\\
&\quad=\Bigg\{\,
\bigg\langle\tikz[baseline=-.6ex]{
\draw [thin, fill=white] (0,0) circle [radius=1];
\draw[triple={[line width=1.4pt, white] in [line width=2.2pt, black] in [line width=5.4pt, white]}] (0,0) -- (-30:1);
\draw[triple={[line width=1.4pt, white] in [line width=2.2pt, black] in [line width=5.4pt, white]}] (0,0) -- (0:1);
\draw[triple={[line width=1.4pt, white] in [line width=2.2pt, black] in [line width=5.4pt, white]}] (0,0) -- (30:1);
\draw[triple={[line width=1.4pt, white] in [line width=2.2pt, black] in [line width=5.4pt, white]}] (0,0) -- (60:1);
\coordinate (a) at ($(0,0)!.8!-30:(-12:1)$);
\coordinate (b) at ($(0,0)!.9!-30:(12:1)$);
\coordinate (a0) at ($(0,0)!.8!0:(-12:1)$);
\coordinate (b0) at ($(0,0)!.9!0:(12:1)$);
\coordinate (a1) at ($(0,0)!.8!30:(-12:1)$);
\coordinate (b1) at ($(0,0)!.9!30:(12:1)$);
\coordinate (a2) at ($(0,0)!.8!60:(-12:1)$);
\coordinate (b2) at ($(0,0)!.9!60:(12:1)$);
\foreach \i in {0,1,...,11} \draw[fill=cyan] ($(0,0) !1! \i*30:(1,0)$) circle [radius=1pt];
\draw[fill=cyan] (-30:1) circle [radius=1pt] node [below right]{${\scriptscriptstyle (m_k^{\epsilon_k},n_k^{\epsilon'_k})}$};
\draw[fill=cyan] (0:1) circle [radius=1pt] node [right]{${\scriptscriptstyle (m_1^{\epsilon_1},n_k^{\epsilon'_1})}$};
\draw[fill=cyan] (30:1) circle [radius=1pt] node [above right]{${\scriptscriptstyle (m_2^{\epsilon_2},n_k^{\epsilon'_2})}$};
\node[rotate=10] at (100:.8){$\cdots$} node[rotate=25] at (-65:.8){$\cdots$} node[rotate=-60] at (-150:.8){$\cdots$};
\node[circle, draw, fill=lightgray!30] (0,0) {\quad$w$\quad\quad};
}\bigg\rangle_{\!3}
\;{\Bigg\vert}\; w\in W(m_1^{\epsilon_1}, n_1^{\epsilon'_1},\dots,m_k^{\epsilon_k}, n_k^{\epsilon'_k})\,\Bigg\},
\end{align*}
where
$
\tikz[baseline=-.6ex, scale=0.5]{
\draw[triple={[line width=1.4pt, white] in [line width=2.2pt, black] in [line width=5.4pt, white]}]
(-1,0)--(1,0);
\draw[fill=cyan] (1,0) circle (.1);
\node at (1,0) [right]{$\scriptstyle{(m_j^{\epsilon_j},n_j^{\epsilon'_j})}$};
}
$
denotes 
$
\tikz[baseline=-.6ex, scale=.5]{
\draw[-<-=.8] (-.6,.4) -- (.6,.4);
\draw[->-=.8] (-.6,-.4) -- (.6,-.4);
\draw[fill=white] (-.1,-.6) rectangle (.1,.6);
\draw (-.1,.0) -- (.1,.0);
\node at (.4,-.6)[right]{$\scriptstyle{m_j}$};
\node at (.4,.6)[right]{$\scriptstyle{n_j}$};}
$
if $\epsilon_j={+}$ and
$
\tikz[baseline=-.6ex, scale=.5]{
\draw[->-=.8] (-.6,.4) -- (.6,.4);
\draw[-<-=.8] (-.6,-.4) -- (.6,-.4);
\draw[fill=white] (-.1,-.6) rectangle (.1,.6);
\draw (-.1,.0) -- (.1,.0);
\node at (.4,-.6)[right]{$\scriptstyle{m_j}$};
\node at (.4,.6)[right]{$\scriptstyle{n_j}$};}
$
if $\epsilon_j={-}$.
\end{DEF}

In the following, 
we use trivalent graphs with white vertices to represent certain types of $A_2$ webs. 
This notation derives some important formulae and it is useful to calculate clasped $A_2$ web spaces. 

\begin{DEF}
Let $n$ be a non-negative integer. 
For $0\leq i\leq n$, 
we define two types of trivalent vertices as follows.
\[
\tikz[baseline=-.6ex, scale=0.5]{
\draw[triple={[line width=1.4pt, white] in [line width=2.2pt, black] in [line width=5.4pt, white]}]
(0,0) -- (1,0);
\draw[-<-=.5] (-1,1) to[out=east, in=north west] (.0,.0);
\draw[->-=.5] (-1,-1) to[out=east, in=south west] (.0,.0);
\draw[fill=white] (0,0)  circle (.2);
\node at (-1,1) [above]{$\scriptstyle{n}$};
\node at (-1,-1) [above]{$\scriptstyle{n}$};
\node at (.5,0) [above]{$\scriptstyle{i}$};}
\text{\ is defined by\ }
\tikz[baseline=-.6ex, scale=0.5]{
\draw (-1,.9) -- +(-.5,0);
\draw (-1,-.9) -- +(-.5,0);
\draw (1,.3) -- +(.5,0);
\draw (1,-.3) -- +(.5,0);
\draw[-<-=.5] (-1,1) to[out=east, in=west] (1,.3);
\draw[->-=.5] (-1,-1) to[out=east, in=west] (1,-.3);
\draw[-<-=.5] (-1,.8) to[out=east, in=east] (-1,.-.8);
\draw[fill=white] (-1.2,.6) rectangle (-1,1.2);
\draw[fill=white] (-1.2,-.6) rectangle (-1,-1.2);
\draw[fill=white] (1,-.6) rectangle (1.2,.6);
\draw (1,.0) -- (1.2,.0);
\node at (-1,.6)[left]{$\scriptstyle{n}$};
\node at (-1,-.6)[left]{$\scriptstyle{n}$};
\node at (-1.2,0){$\scriptstyle{n-i}$};
\node at (0,1){$\scriptstyle{i}$};
\node at (0,-1){$\scriptstyle{i}$};}
\, ,
\tikz[baseline=-.6ex, scale=0.5]{
\draw[triple={[line width=1.4pt, white] in [line width=2.2pt, black] in [line width=5.4pt, white]}]
(0,0) -- (1,0);
\draw[->-=.5] (-1,1) to[out=east, in=north west] (.0,.0);
\draw[-<-=.5] (-1,-1) to[out=east, in=south west] (.0,.0);
\draw[fill=white] (0,0)  circle (.2);
\node at (-1,1) [above]{$\scriptstyle{n}$};
\node at (-1,-1) [above]{$\scriptstyle{n}$};
\node at (.5,0) [above]{$\scriptstyle{i}$};}
\text{\ by\ }
\tikz[baseline=-.6ex, scale=0.5]{
\draw (-1,.9) -- +(-.5,0);
\draw (-1,-.9) -- +(-.5,0);
\draw (1,.3) -- +(.5,0);
\draw (1,-.3) -- +(.5,0);
\draw[->-=.5] (-1,1) to[out=east, in=west] (1,.3);
\draw[-<-=.5] (-1,-1) to[out=east, in=west] (1,-.3);
\draw[->-=.5] (-1,.8) to[out=east, in=east] (-1,.-.8);
\draw[fill=white] (-1.2,.6) rectangle (-1,1.2);
\draw[fill=white] (-1.2,-.6) rectangle (-1,-1.2);
\draw[fill=white] (1,-.6) rectangle (1.2,.6);
\draw (1,.0) -- (1.2,.0);
\node at (-1,.6)[left]{$\scriptstyle{n}$};
\node at (-1,-.6)[left]{$\scriptstyle{n}$};
\node at (-1.2,0){$\scriptstyle{n-i}$};
\node at (0,1){$\scriptstyle{i}$};
\node at (0,-1){$\scriptstyle{i}$};}\,.
\]
\end{DEF}

We use the following notations:
\begin{itemize}
\item $\Delta(m,n)=
\bigg\langle\,
\tikz[baseline=-.6ex, scale=0.5]{
\draw[-<-=.5] (0,0) circle (.6);
\draw[->-=.5] (0,0) circle (1);
\draw[fill=white] (.4,-.1) rectangle (1.2,.1);
\draw (.8,-.1) -- (.8,.1);
\node at (-.6,0) [right]{$\scriptstyle{n}$};
\node at (-1,0) [left]{$\scriptstyle{m}$};
}\,\bigg\rangle_{\! 3},
$
\item $\theta(n,n,(i,i))=
\bigg\langle\,
\tikz[baseline=-.6ex, scale=0.5]{
\draw[triple={[line width=1.4pt, white] in [line width=2.2pt, black] in [line width=5.4pt, white]}]
(0,0) -- (1,0);
\draw[->-=.5] 
(0,0) to[out=north, in=west] 
(.5,.8) to[out=east, in=north] (1,0);
\draw[-<-=.5] 
(0,0) to[out=south, in=west] 
(.5,-.8) to[out=east, in=south] (1,0);
\draw[fill=white] (0,0)  circle (.1);
\draw[fill=white] (1,0)  circle (.1);
\node at (0,.8) {$\scriptstyle{n}$};
\node at (0,-.8) {$\scriptstyle{n}$};
\node at (.5,.0) [above]{$\scriptstyle{i}$};
}\,\bigg\rangle_{\! 3},
$
\item $\operatorname{Tet}\!
\begin{bmatrix}
n^{-}&n^{+}&(j,j)\\
n^{-}&n^{+}&(i,i)
\end{bmatrix}=
\bigg\langle\,
\tikz[baseline=-.6ex, scale=0.5]{
\draw[triple={[line width=1.4pt, white] in [line width=2.2pt, black] in [line width=5.4pt, white]}]
(0,0) -- (1.2,0);
\draw[triple={[line width=1.4pt, white] in [line width=2.2pt, black] in [line width=5.4pt, white]}]
(.6,1) to[out=north, in=north] 
(2,1) -- (2,-1)
to[out=south, in=south] (.6,-1);
\draw[->-=.5] (0,0) -- (.6,1);
\draw[->-=.5] (.6,1) -- (1.2,0);
\draw[-<-=.5] (0,0) -- (.6,-1);
\draw[-<-=.5] (.6,-1) -- (1.2,0);
\draw[fill=white] (0,0)  circle (.1);
\draw[fill=white] (1.2,0)  circle (.1);
\draw[fill=white] (.6,1)  circle (.1);
\draw[fill=white] (.6,-1)  circle (.1);
\node at (0,.8) {$\scriptstyle{n}$};
\node at (0,-.8) {$\scriptstyle{n}$};
\node at (1,.8) {$\scriptstyle{n}$};
\node at (1,-.8) {$\scriptstyle{n}$};
\node at (.6,.0) [above]{$\scriptstyle{i}$};
\node at (2,.0) [left]{$\scriptstyle{j}$};
}\,\bigg\rangle_{\! 3},
$
\item 
$\displaystyle
\begin{Bmatrix}
n^{-}&n^{+}&(j,j)\\
n^{-}&n^{+}&(i,i)
\end{Bmatrix}
=
\frac{\operatorname{Tet}\!
\begin{bmatrix}
n^{-}&n^{+}&(j,j)\\
n^{-}&n^{+}&(i,i)
\end{bmatrix}\Delta(j,j)}{\theta(n,n,(j,j))^2},
$
\end{itemize}
where $m,n$ are any non-negative integers and $0\leq i,j\leq n$.

\begin{LEM}[\cite{Yuasa17b}]\label{value}\ 
\begin{enumerate}
\item $\Delta(i,j)=\frac{\left[i+1\right]\left[j+1\right]\left[i+j+2\right]}{\left[2\right]}$,
\item $\theta(n,n,(i,i))=\sum_{k=0}^i(-1)^k\frac{{i\brack k}^2}{{2i+1\brack k}}\frac{\Delta(n,0)^2}{\Delta(n-i+k,0)}=\frac{{n+i+2\brack 2i+2}}{{n\brack i}^2}\Delta(i,i)$,
\item $\operatorname{Tet}\!
\begin{bmatrix}
n^{-}&n^{+}&(j,j)\\
n^{-}&n^{+}&(i,i)
\end{bmatrix}
=\sum_{k=\max\{0, i+j-n\}}^i(-1)^k\frac{{i\brack k}^2{n-j\brack i-k}{n+j+2\brack i-k}}{{2i+1\brack k}{n\brack i-k}^2}\theta(n,n,(j,j))$.
\end{enumerate}
\end{LEM}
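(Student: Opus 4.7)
The plan is to reduce each of the three evaluations to an expansion of the $(i,i)$, $(j,j)$ or $(i,j)$ clasps via Definition~\ref{doubleA2clasp}, followed by a $q$-binomial summation. The essential building block is the base case $\Delta(n,0)=[n+1][n+2]/[2]$, which I would establish first by induction on $n$: starting from $\Delta(1,0)=[3]$, one computes $\Delta(n,0)$ by expanding the single-index clasp of type $n$ using its defining recursion and applying the bigon and digon reductions recorded in the Definition of the $A_2$ bracket.

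For part~(1), the approach is to take the closed diagram $\Delta(m,n)$ and expand the $(m,n)$-clasp by Definition~\ref{doubleA2clasp}. The result is a sum indexed by $0\leq k\leq\min\{m,n\}$ of nested single-index loops. After collapsing the internal bigons using the basic skein relations, each term reduces to a product of $\Delta(\cdot,0)$'s whose values are already known from the induction above. The alternating sum then telescopes to $[m+1][n+1][m+n+2]/[2]$ via a $q$-analogue of the Chu--Vandermonde identity.

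For part~(2), the diagram for $\theta(n,n,(i,i))$ carries a single $(i,i)$-clasp. Expanding this clasp with Definition~\ref{doubleA2clasp} produces a sum over $k$ of theta-nets with single-integer edge labels; each such theta-net closes up into nested loops and evaluates to $\Delta(n,0)^{2}/\Delta(n-i+k,0)$ times the squared binomial coefficient from Definition~\ref{doubleA2clasp}. This yields the first equality. The passage to the closed form $\genfrac{[}{]}{0pt}{}{n+i+2}{2i+2}\big/\genfrac{[}{]}{0pt}{}{n}{i}^{2}\cdot\Delta(i,i)$ is a $q$-hypergeometric identity, which I would prove either by induction on $i$ or by recognising the sum as a terminating balanced $q$-series and applying a Saalsch\"utz-type summation.

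For part~(3), I would expand both $(i,i)$-clasps (and treat the $(j,j)$-labelled outer edge analogously). By Lemma~\ref{doubleA2claspprop} the cross-terms in which the two expansions are incompatible vanish, and what remains is a single-index sum over $k$ of tetrahedral nets with single-integer labels. Each such net evaluates by the results already proved in (1) and (2), and a short manipulation of quantum binomials assembles the summands into the claimed formula. The main obstacle throughout is not the skein manipulation but the $q$-series bookkeeping required to collapse each raw expansion sum into the advertised closed form; in particular the second equality in~(2) is the one step that genuinely requires a nontrivial $q$-binomial identity rather than a diagrammatic reduction.
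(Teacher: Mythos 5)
The paper does not actually prove Lemma~\ref{value}: it is imported verbatim from \cite{Yuasa17b}, so there is no internal argument to compare yours against. Judged on its own terms, your strategy---expand the double clasps by Definition~\ref{doubleA2clasp} and finish with terminating $q$-binomial summations---is the natural one and is visibly consistent with the shape of the stated formulas: the factor $(-1)^k{i\brack k}^2/{2i+1\brack k}$ in parts (2) and (3) is exactly the coefficient of the $k$-th term in the expansion of the $(i,i)$-clasp, and both formulas pass the degenerate check $i=0$, where they collapse to $\Delta(n,0)$ and $\theta(n,n,(j,j))$ respectively.

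As written, though, the proposal is an outline with two concrete soft spots. First, in part (1), closing the $k$-th term of the $(m,n)$-clasp expansion inside the annular diagram $\Delta(m,n)$ does \emph{not} yield ``a product of $\Delta(\cdot,0)$'s'': it yields a closed network on four single clasps with edges coloured $n$, $n-k$, $m$, $m-k$, $k$, $k$, and reducing that network requires the absorption and bigon moves (the analogues of Lemmas~\ref{bigon} and~\ref{bibigon}) applied in order, producing \emph{ratios} of $\Delta$'s along the way; the claimed telescoping cannot start until those evaluations are in hand. Second, in part (3) there is only one $(i,i)$-labelled internal edge in the tetrahedral net (the clasps inside the two adjacent white vertices merge by idempotency), so ``expand both $(i,i)$-clasps'' should read ``expand the single $(i,i)$-clasp''; the resulting net for each $k$ must then be collapsed onto $\theta(n,n,(j,j))$ via Lemma~\ref{bigon}, which is precisely where the factor ${n-j\brack i-k}{n+j+2\brack i-k}/{n\brack i-k}^2$ arises and which your sketch does not address. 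Finally, every closed form in the lemma rests on a terminating $q$-hypergeometric identity that you name (Chu--Vandermonde, $q$-Saalsch\"utz) but do not verify; since you yourself flag these as the genuinely nontrivial steps, the proposal should be read as a plausible reduction of the lemma to those identities rather than a proof of it.
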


\begin{LEM}[\cite{Yuasa17b}]\label{bigon}
\[
\bigg\langle\,
\tikz[baseline=-.6ex, scale=0.5]{
\draw[triple={[line width=1.4pt, white] in [line width=2.2pt, black] in [line width=5.4pt, white]}]
(-1,0)--(-.2,0);
\draw[triple={[line width=1.4pt, white] in [line width=2.2pt, black] in [line width=5.4pt, white]}]
(1.2,0)--(2,0);
\draw[-<-=.5] 
(-.2,0) to[out=north, in=west] 
(.6,.5) to[out=east, in=north] (1.2,0);
\draw[->-=.5] 
(-.2,0) to[out=south, in=west] 
(.6,-.5) to[out=east, in=south] (1.2,0);
\draw[fill=white] (-.2,0) circle (.1);
\draw[fill=white] (1.2,0) circle (.1);
\node at (.6,.5) [above]{$\scriptstyle{n}$};
\node at (.6,-.5) [below]{$\scriptstyle{n}$};
\node at (-1,0) [above]{$\scriptstyle{i}$};
\node at (2,0) [above]{$\scriptstyle{j}$};
}\,\bigg\rangle_{\! 3}
=\delta_{ij}
\frac{\theta(n,n,(i,i))}{\Delta(i,i)}
\bigg\langle
\,\tikz[baseline=-.6ex, scale=0.5]{
\draw[triple={[line width=1.4pt, white] in [line width=2.2pt, black] in [line width=5.4pt, white]}]
(-1,0)--(1,0);
\node at (0,0) [above]{$\scriptstyle{i}$};
}\,\bigg\rangle_{\! 3},
\]
where $\delta_{ij}$ is the Kronecker delta function.
\end{LEM}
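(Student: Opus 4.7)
The strategy is to view the bigon as a linear map between clasped strands of colors $i$ and $j$, and to argue that this map vanishes when $i\neq j$ and is a scalar multiple of the identity strand when $i=j$; the scalar is then fixed by a single closure computation.

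For the vanishing case $i\neq j$, I would expand each of the two trivalent vertices according to its defining decomposition. After the expansion, the bigon becomes a web whose upper and lower arcs each carry an $A_2$ clasp of type $(n,n)$ on both endpoints, together with double clasps of type $(n-i,i)$ on the left and $(n-j,j)$ on the right. The $i$ strands exported to the right by the left vertex must match the $j$ strands exported to the left by the right vertex through the two $n$-bundles forming the arcs. When $i\neq j$, any such matching forces at least one strand to turn back into one of the $(n,n)$-clasps, and Lemma~\ref{doubleA2claspprop} then annihilates the whole web.

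For the case $i=j$, the vanishing argument together with the one-dimensionality of the relevant Hom space shows that the bigon equals $c\cdot(\text{strand of color } i)$ for some scalar $c\in\mathbb{Q}(q^{1/6})$. To compute $c$, I would close the external $i$-strands on the left and right of the bigon by joining them with a single clasped line of color $i$. The closed bigon is, by definition of $\theta$, equal to $\theta(n,n,(i,i))$, while the closed identity strand evaluates to $\Delta(i,i)$. Equating the two scalars yields $c=\theta(n,n,(i,i))/\Delta(i,i)$, as claimed.

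The main obstacle is making the vanishing case rigorous: although the Schur-type intuition is transparent, a complete argument requires expanding the trivalent vertices via their recursive definition and tracking how the resulting internal $i$- and $(n-i)$-strands interact with the surrounding $(n,n)$-clasps, with Lemma~\ref{doubleA2claspprop} invoked to eliminate every turnback. Once vanishing is established, the scalar in the $i=j$ case follows directly from the definitions of $\theta(n,n,(i,i))$ and $\Delta(i,i)$ given earlier.
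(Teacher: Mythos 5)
The paper does not actually prove Lemma~\ref{bigon}: it is quoted from \cite{Yuasa17b}, so there is no in-paper argument to compare against. Your overall strategy is nonetheless the standard one and is correct in outline: the bigon lies in the (at most one-dimensional) space of clasped webs joining a double clasp of type $(i,i)$ to one of type $(j,j)$; it vanishes for $i\neq j$; and for $i=j$ the scalar is determined by closing both sides, which gives $\theta(n,n,(i,i))$ on the left (this is literally the definition of $\theta$ as the theta-graph with edges $n$, $n$, $(i,i)$) and $c\cdot\Delta(i,i)$ on the right. That closure computation is exactly right.

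The gap, which you correctly flag, is the vanishing/one-dimensionality step, and your proposed route through it has a technical slip: the two arcs of the bigon carry single $A_2$ clasps of type $n$ (they are single directed $n$-bundles), not clasps of type $(n,n)$, so the annihilation needed for turnbacks into the arcs is the second item of the lemma ``Properties of $A_2$ clasps'' rather than Lemma~\ref{doubleA2claspprop}; the latter is what kills turnbacks into the external $(i,i)$ and $(j,j)$ clasps. More importantly, after expanding the two white vertices the strand counts at the two ends of each arc are consistent for every pair $(i,j)$, since $n=(n-i)+i=(n-j)+j$, so no contradiction arises purely from matching strands; one genuinely has to show that every basis web compatible with all four clasps contains a turnback. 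The efficient way to do this --- and the one this paper itself uses for the analogous dimension counts in the proof of Theorem~\ref{recoupling} --- is the Euler-characteristic (cut-path) argument of \cite[Lemma~3.3]{OhtsukiYamada97}, which shows directly that the relevant clasped web space is zero for $i\neq j$ and is spanned by the single $(i,i)$-strand for $i=j$. Substituting that argument for your expansion sketch closes the gap, and the rest of your proof goes through.
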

We can easily show the following by the same way as the previous lemma.
\begin{LEM}\label{bibigon}
\[
\bigg\langle\,
\tikz[baseline=-.6ex, scale=0.5]{
\draw[-<-=.5] (-1,0)--(-.2,0);
\draw[-<-=.5] (1.2,0)--(2,0);
\draw[triple={[line width=1.4pt, white] in [line width=2.2pt, black] in [line width=5.4pt, white]}] 
(-.2,0) to[out=north, in=west] 
(.6,.5) to[out=east, in=north] (1.2,0);
\draw[-<-=.5] 
(-.2,0) to[out=south, in=west] 
(.6,-.5) to[out=east, in=south] (1.2,0);
\draw[fill=white] (-.2,0) circle (.1);
\draw[fill=white] (1.2,0) circle (.1);
\node at (.6,.5) [above]{$\scriptstyle{i}$};
\node at (.6,-.5) [below]{$\scriptstyle{n}$};
\node at (-1,0) [above]{$\scriptstyle{n}$};
\node at (2,0) [above]{$\scriptstyle{n}$};
}\,\bigg\rangle_{\! 3}
=
\frac{\theta(n,n,(i,i))}{\Delta(n,0)}
\bigg\langle
\,\tikz[baseline=-.6ex, scale=0.5]{
\draw[-<-=.5](-1,0)--(1,0);
\node at (0,0) [above]{$\scriptstyle{n}$};
}\,\bigg\rangle_{\! 3}\, .
\]
\end{LEM}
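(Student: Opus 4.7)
The plan is to mirror the argument for Lemma~\ref{bigon}: I will first observe that the left-hand side must be a scalar multiple of the straight $n$-strand by a one-dimensionality argument, and then extract the scalar by closing the disk and comparing $\theta$- and $\Delta$-evaluations. Specifically, I would unpack the two white trivalent vertices using their definition, exposing the $n$-clasps that each of them contains on the external $n$-leg. As a consequence, the entire left-hand side is a web sandwiched between two $n$-clasps on the boundary of the disk. Any such web is absorbed, up to a scalar, by the idempotency and absorbing properties of the $A_2$ $n$-clasp already recorded in the paper; equivalently, the clasped web space with two $n$-clasped boundary legs is one-dimensional and spanned by the straight $n$-strand. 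Therefore the left-hand side equals $\lambda\bigl\langle\,n\text{-strand}\,\bigr\rangle_3$ for some scalar $\lambda=\lambda(n,i)\in\mathbb{Q}(q^{1/6})$.

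To compute $\lambda$, I would close both sides of the identity by joining the two external endpoints of the disk with a single outer $n$-arc. The right-hand side becomes a single $n$-loop of bracket $\Delta(n,0)$. The left-hand side becomes a web in which the two white vertices are joined by three arcs: the upper internal $i$-colored triple-line, the lower internal $n$-strand, and the newly attached outer $n$-arc. Up to planar isotopy, and with orientations inherited from the two white vertices, this is exactly the theta graph whose bracket is $\theta(n,n,(i,i))$ as defined in the paper. Matching the two closures gives $\lambda\,\Delta(n,0)=\theta(n,n,(i,i))$, and solving yields the stated formula.

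The only step that requires care is the orientation bookkeeping: one must check that the outer $n$-arc can be oriented compatibly with both white vertices so that the closed web is precisely the theta graph of Lemma~\ref{value}(2), and the one-dimensionality claim must be confirmed via the clasp properties. Both of these are straightforward consequences of the definitions already set up, and once they are verified the proof is complete without any further computation; this is exactly the sense in which the lemma is ``shown by the same way as the previous lemma.''
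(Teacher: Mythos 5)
Your argument is correct and is exactly the one the paper intends: the paper gives no written proof beyond the remark that the lemma follows ``by the same way as the previous lemma'' (Lemma~\ref{bigon}), and your two steps --- one-dimensionality of the space of webs sandwiched between two $n$-clasps, then closing with an outer $n$-arc to identify the scalar via $\lambda\,\Delta(n,0)=\theta(n,n,(i,i))$ --- are precisely that argument carried out in detail. The orientation compatibility and the one-dimensionality both follow from the recorded clasp properties as you note, so nothing is missing.
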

\begin{THM}[Recoupling Theorem for $A_2$]\label{recoupling}
Let $n$ be non-negative integer and $0\leq i,s,t\leq n$.
\begin{itemize}
\item
$\bigg\langle\tikz[baseline=-.6ex, scale=0.4]{
\draw[triple={[line width=1.4pt, white] in [line width=2.2pt, black] in [line width=5.4pt, white]}]
(0,0) -- (1,0);
\draw[-<-=.5] (-1,1) -- (0,0);
\draw[->-=.5] (-1,-1) -- (0,0);
\draw[-<-=.5] (1,0) -- (2,1);
\draw[->-=.5] (1,0) -- (2,-1);
\draw[fill=white] (0,0) circle (.1);
\draw[fill=white] (1,0) circle (.1);
\node at (-1,1) [below]{$\scriptstyle{n}$};
\node at (-1,-1) [above]{$\scriptstyle{n}$};
\node at (2,1) [below]{$\scriptstyle{n}$};
\node at (2,-1) [above]{$\scriptstyle{n}$};
\node at (.5,0) [above]{$\scriptstyle{i}$};
}\,\bigg\rangle_{\! 3}
=\sum_{j=0}^{n}
\begin{Bmatrix}
n^{-}&n^{+}&(j,j)\\
n^{-}&n^{+}&(i,i)
\end{Bmatrix}
\bigg\langle\tikz[baseline=-.6ex, scale=0.4]{
\draw[triple={[line width=1.4pt, white] in [line width=2.2pt, black] in [line width=5.4pt, white]}]
(0,-.5) -- (0,.5);
\draw[-<-=.5] (-1,1) -- (0,.5);
\draw[->-=.5] (-1,-1) -- (0,-.5);
\draw[-<-=.5] (0,.5) -- (1,1);
\draw[->-=.5] (0,-.5) -- (1,-1);
\draw[fill=white] (0,.5) circle (.1);
\draw[fill=white] (0,-.5) circle (.1);
\node at (-1,1) [below]{$\scriptstyle{n}$};
\node at (-1,-1) [above]{$\scriptstyle{n}$};
\node at (1,1) [below]{$\scriptstyle{n}$};
\node at (1,-1) [above]{$\scriptstyle{n}$};
\node at (0,0) [right]{$\scriptstyle{j}$};
}\,\bigg\rangle_{\!3}$,
\item
$\bigg\langle\tikz[baseline=-.6ex, scale=0.4]{
\draw[-<-=.5] (0,0) -- (1,0);
\draw[triple={[line width=1.4pt, white] in [line width=2.2pt, black] in [line width=5.4pt, white]}]
(-1,1) -- (0,0);
\draw[-<-=.5] (-1,-1) -- (0,0);
\draw[-<-=.5] (1,0) -- (2,1);
\draw[triple={[line width=1.4pt, white] in [line width=2.2pt, black] in [line width=5.4pt, white]}]
(1,0) -- (2,-1);
\draw[fill=white] (0,0) circle (.1);
\draw[fill=white] (1,0) circle (.1);
\node at (-1,1) [below]{$\scriptstyle{s}$};
\node at (-1,-1) [above]{$\scriptstyle{n}$};
\node at (2,1) [below]{$\scriptstyle{n}$};
\node at (2,-1) [above]{$\scriptstyle{t}$};
\node at (.5,0) [above]{$\scriptstyle{n}$};
}\,\bigg\rangle_{\! 3}
=
\begin{Bmatrix}
n^{+}&s^{\pm}&n\\
n^{-}&t^{\mp}&n
\end{Bmatrix}
\bigg\langle\tikz[baseline=-.6ex, scale=0.4]{
\draw[-<-=.5] (0,-.5) -- (0,.5);
\draw[triple={[line width=1.4pt, white] in [line width=2.2pt, black] in [line width=5.4pt, white]}] 
(-1,1) -- (0,.5);
\draw[-<-=.5] (-1,-1) -- (0,-.5);
\draw[-<-=.5] (0,.5) -- (1,1);
\draw[triple={[line width=1.4pt, white] in [line width=2.2pt, black] in [line width=5.4pt, white]}] 
(0,-.5) -- (1,-1);
\draw[fill=white] (0,.5) circle (.1);
\draw[fill=white] (0,-.5) circle (.1);
\node at (-1,1) [below]{$\scriptstyle{s}$};
\node at (-1,-1) [above]{$\scriptstyle{n}$};
\node at (1,1) [below]{$\scriptstyle{n}$};
\node at (1,-1) [above]{$\scriptstyle{t}$};
\node at (0,0) [right]{$\scriptstyle{n}$};
}\,\bigg\rangle_{\!3}$,
\end{itemize}
where 
\[
\begin{Bmatrix}
n^{+}&s^{\pm}&n\\
n^{-}&t^{\mp}&n
\end{Bmatrix}
=
\frac{\theta(n,n,(i,i))\theta(n,n,(j,j))}
{\operatorname{Tet}\!
\begin{bmatrix}
n^{-}&n^{+}&(s,s)\\
n^{-}&n^{+}&(t,t)
\end{bmatrix}\Delta(n,0)}\, .
\]
\end{THM}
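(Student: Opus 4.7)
The plan is to regard both sides of each identity as vectors in a common clasped $A_2$ web subspace and to determine the expansion coefficients by an orthogonality argument using Lemma~\ref{bigon} (resp.~Lemma~\ref{bibigon}). For the first identity, the ambient subspace is generated by the vertical templates $e_{j}$ $(j=0,1,\dots,n)$ depicted on the RHS; these form a basis because the colour of the vertical internal edge uniquely determines the web, by admissibility at the white trivalent vertices together with the absorbing property of the clasps. Accordingly I write $\mathrm{LHS}=\sum_{j=0}^{n}c_{j}\,e_{j}$ and the task reduces to identifying each $c_{j}$.

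To isolate $c_{j}$, I would close up the diagrams by gluing a mirror web $e_{j}^{*}$ onto the four external $n$-coloured endpoints, turning each side into a scalar. On the RHS, the glued diagram $e_{k}\cup e_{j}^{*}$ contains two bigons whose internal edges carry colours $k$ and $j$; applying Lemma~\ref{bigon} collapses this to $\delta_{jk}$ times a factor built from $\theta(n,n,(j,j))$, $\Delta(j,j)$, and the outer $n$-coloured loop $\Delta(n,0)$. Thus the RHS closure picks out a single term and returns $c_{j}\,\theta(n,n,(j,j))^{2}\Delta(n,0)/\Delta(j,j)$. On the LHS, the same closure produces a tetrahedral network whose four vertices are the two white vertices of the LHS together with the two vertices of $e_{j}^{*}$; this equals $\operatorname{Tet}\!\begin{bmatrix}n^{-}&n^{+}&(j,j)\\ n^{-}&n^{+}&(i,i)\end{bmatrix}\cdot\Delta(n,0)$. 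Equating the two expressions and dividing out yields $c_{j}=\operatorname{Tet}[\,\cdots\,]\Delta(j,j)/\theta(n,n,(j,j))^{2}$, which is precisely the symbol $\begin{Bmatrix}n^{-}&n^{+}&(j,j)\\ n^{-}&n^{+}&(i,i)\end{Bmatrix}$ appearing in the statement. The second identity is handled in exactly the same way: admissibility of the RHS template together with the mixed boundary colours $s,n,n,t$ forces the internal edge to carry colour $n$, so the sum collapses to a single term, and the relevant bigon evaluation is Lemma~\ref{bibigon} in place of Lemma~\ref{bigon}.

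The main obstacle is the bookkeeping of the clasps under the closure operation. The type-$(n,n)$ and type-$(i,i)$ clasps are themselves nontrivial linear combinations of webs (Definition~\ref{doubleA2clasp}), so verifying that the closure pairing factorises as a product of independent bigons requires checking that the off-diagonal terms in the clasp expansions vanish against the white-vertex constraints, repeatedly using Lemma~\ref{doubleA2claspprop} and the absorbing property of clasps. Once the pairing is shown to be diagonal in $j$, the recoupling formulae fall out of the scalar computation sketched above.
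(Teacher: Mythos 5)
Your coefficient computation is essentially the paper's: the author likewise identifies the $6j$ symbol by closing both sides with the dual vertical template and applying Lemma~\ref{bigon} (resp.\ Lemma~\ref{bibigon}), and your value $c_j=\operatorname{Tet}[\cdots]\Delta(j,j)/\theta(n,n,(j,j))^2$ agrees with the paper's definition of the symbol. (One minor slip: the closure of two H-templates leaves no residual $n$-coloured loop --- the right-hand side closes to $\delta_{jk}\,\theta(n,n,(j,j))^2/\Delta(j,j)$ and the left-hand side to the bare tetrahedron --- so the factor $\Delta(n,0)$ you insert on both sides is spurious; it cancels and does not affect $c_j$.)

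The genuine gap is the very first step: you write $\mathrm{LHS}=\sum_j c_j e_j$, but your justification that the $e_j$ form a basis (``the colour of the internal edge uniquely determines the web, by admissibility and the absorbing property'') only shows the templates are pairwise distinct, and your orthogonality computation only gives their linear independence. Neither shows that they \emph{span} $W((n^{+},0),(n^{-},0),(n^{+},0),(n^{-},0))$, and in particular neither shows that the horizontal template on the left-hand side lies in their span --- which is the actual content of the recoupling theorem; once the expansion is known to exist, reading off $c_j$ is routine. The paper supplies exactly this missing piece: it cuts an arbitrary clasped web along a vertical line, replaces the two clasps meeting that line by the $A_2$ clasp of type $(n,n)$ of Definition~\ref{doubleA2clasp}, whose expansion produces only lower-colour clasps across the cut, and then invokes the one-dimensionality of the triangle spaces $W((i^{+},i^{-}),(n^{+},0),(n^{-},0))$ (via the Euler-characteristic argument of Ohtsuki--Yamada) to collapse each half onto a single white vertex. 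The analogous statement that $W((i^{+},j^{-}),(s^{+},s^{-}),(n^{+},0))\neq 0$ forces $(i,j)=(0,n)$ is what makes the space in the second identity one-dimensional; your appeal to ``admissibility of the RHS template'' constrains the template itself but again does not show the left-hand web is proportional to it. Finally, the obstacle you single out --- clasp bookkeeping in the closure pairing --- is handled directly by Lemma~\ref{bigon}; the ingredient actually missing from your argument is the spanning step.
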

We call the coefficients {\em quantum $6j$ symbols}.
\begin{proof}
We prove this theorem in the same way as the proof of the Recoupling Theorem in \cite[Chapter~7]{KauffmanLins94}.
We can see that the clasped $A_2$ web space $W((i^{+},i^{-}),(n^{+},0),(n^{-},0))$ is one-dimensional vector space spanned by 
$\big\langle
\tikz[baseline=-.6ex, scale=0.3]{
\draw[triple={[line width=1.4pt, white] in [line width=2.2pt, black] in [line width=5.4pt, white]}]
(0,0) -- (1,0);
\draw[-<-=.5] (-1,1) to[out=east, in=north west] (.0,.0);
\draw[->-=.5] (-1,-1) to[out=east, in=south west] (.0,.0);
\draw[fill=white] (0,0)  circle (.2);
\node at (-1,1) [left]{$\scriptstyle{n}$};
\node at (-1,-1) [left]{$\scriptstyle{n}$};
\node at (.5,0) [above]{$\scriptstyle{i}$};}
\big\rangle_3$ 
by the similar argument in the proof of \cite[Lemma~3.3]{OhtsukiYamada97}.
A basis of $W((n^{+},0),(n^{-},0),(n^{+},0),(n^{-},0))$ is given as
\[
\mathcal{B}=\bigg\{\,
\Big\langle\tikz[baseline=-.6ex, scale=0.4]{
\draw[triple={[line width=1.4pt, white] in [line width=2.2pt, black] in [line width=5.4pt, white]}]
(0,0) -- (1,0);
\draw[-<-=.5] (-1,1) -- (0,0);
\draw[->-=.5] (-1,-1) -- (0,0);
\draw[-<-=.5] (1,0) -- (2,1);
\draw[->-=.5] (1,0) -- (2,-1);
\draw[fill=white] (0,0) circle (.1);
\draw[fill=white] (1,0) circle (.1);
\node at (-1,1) [below]{$\scriptstyle{n}$};
\node at (-1,-1) [above]{$\scriptstyle{n}$};
\node at (2,1) [below]{$\scriptstyle{n}$};
\node at (2,-1) [above]{$\scriptstyle{n}$};
\node at (.5,0) [above]{$\scriptstyle{i}$};
}\,\Big\rangle_{\! 3}
\,\Big\vert\,
0\leq i\leq n
\,\bigg\}.
\]
Firstly, 
we set a dotted line from south to north for any $A_2$ web $w$ in $W((n^{+},0),(n^{-},0),(n^{+},0),(n^{-},0))$ and replace two $A_2$ clasps on it with the $A_2$ clasp of type $(n,n)$ as follows:
\[w=\bigg\langle
\tikz[baseline=-.6ex, scale=0.3]{
\draw[thick, dotted] (0,-2) -- (0,2);
\draw[-<-=.2] (135:2) to[out=east, in=west] (0,1)
to[out=east, in=north west] (1,0);
\draw[->-=.2] (-135:2) to[out=east, in=west] (0,-1)
to[out=east, in=south west] (1,0);
\draw[->-=.3] (45:2) -- (1,0);
\draw[-<-=.3] (-45:2) -- (1,0);
\draw (0,0) circle (2);
\draw[fill=lightgray!30] (1,0) circle (.5);
\draw[fill=white] (-.1,.6) rectangle (.1,1.4);
\draw[fill=white] (-.1,-.6) rectangle (.1,-1.4);
\draw[fill=cyan] (-45:2) circle (.1);
\draw[fill=cyan] (45:2) circle (.1);
\draw[fill=cyan] (135:2) circle (.1);
\draw[fill=cyan] (-135:2) circle (.1);
\node at (-45:2) [right]{$\scriptstyle{n}$};
\node at (45:2) [right]{$\scriptstyle{n}$};
\node at (135:2) [left]{$\scriptstyle{n}$};
\node at (-135:2) [left]{$\scriptstyle{n}$};
\node at (1,0) {$\scriptstyle{w}$};}
\bigg\rangle_3\,
=\bigg\langle
\tikz[baseline=-.6ex, scale=0.3]{
\draw[thick, dotted] (0,-2) -- (0,2);
\draw[-<-=.2] (135:2) to[out=east, in=west] (0,.2) -- (1,.2);
\draw[->-=.2] (-135:2) to[out=east, in=west] (0,-.2) -- (1,-.2);
\draw[->-=.3] (45:2) -- (1,0);
\draw[-<-=.3] (-45:2) -- (1,0);
\draw (0,0) circle (2);
\draw[fill=lightgray!30] (1,0) circle (.5);
\draw[fill=white] (-.1,-.4) rectangle (.1,.4);
\draw (-.1,0) -- (.1,0);
\draw[fill=cyan] (-45:2) circle (.1);
\draw[fill=cyan] (45:2) circle (.1);
\draw[fill=cyan] (135:2) circle (.1);
\draw[fill=cyan] (-135:2) circle (.1);
\node at (-45:2) [right]{$\scriptstyle{n}$};
\node at (45:2) [right]{$\scriptstyle{n}$};
\node at (135:2) [left]{$\scriptstyle{n}$};
\node at (-135:2) [left]{$\scriptstyle{n}$};
\node at (1,0) {$\scriptstyle{w}$};}
\bigg\rangle_3\,
+w'.
\]
The definition of the $A_2$ clasp of type $(n,n)$ imply that $w'$ can be represented as a sum of $A_2$ webs such that the dotted line intersects with the webs at two clasps with color $k<n$. 
By repeating the replacement of clasps, 
\[w=\sum_{i=0}^na_i\bigg\langle
\tikz[baseline=-.6ex, scale=0.3]{
\draw[triple={[line width=1.4pt, white] in [line width=2.2pt, black] in [line width=5.4pt, white]}]
(-.5,0) -- (1,0);
\draw[-<-=.5] (135:2) to[out=east, in=north west] (-.5,0);
\draw[->-=.5] (-135:2) to[out=east, in=south west] (-.5,0);
\draw[->-=.3] (45:2) -- (1,0);
\draw[-<-=.3] (-45:2) -- (1,0);
\draw[fill=white] (-.5,0) circle (.2);
\draw (0,0) circle (2);
\draw[fill=lightgray!30] (1,0) circle (.5);
\draw[fill=cyan] (-45:2) circle (.1);
\draw[fill=cyan] (45:2) circle (.1);
\draw[fill=cyan] (135:2) circle (.1);
\draw[fill=cyan] (-135:2) circle (.1);
\node at (-45:2) [right]{$\scriptstyle{n}$};
\node at (45:2) [right]{$\scriptstyle{n}$};
\node at (135:2) [left]{$\scriptstyle{n}$};
\node at (-135:2) [left]{$\scriptstyle{n}$};
\node at (0,0) [above]{$\scriptstyle{i}$};
\node at (1,0) {$\scriptscriptstyle{w_i}$};}
\bigg\rangle_3\,
=\sum_{i=0}^na_i\bigg\langle
\tikz[baseline=-.6ex, scale=0.3]{
\draw[triple={[line width=1.4pt, white] in [line width=2.2pt, black] in [line width=5.4pt, white]}]
(-.5,0) -- (.5,0);
\draw[-<-=.5] (135:2) to[out=east, in=north west] (-.5,0);
\draw[->-=.5] (-135:2) to[out=east, in=south west] (-.5,0);
\draw[->-=.5] (45:2) to[out=west, in=north east] (.5,0);
\draw[-<-=.5] (-45:2) to[out=west, in=south east] (.5,0);
\draw[fill=white] (-.5,0) circle (.2);
\draw[fill=white] (.5,0) circle (.2);
\draw (0,0) circle (2);
\draw[fill=cyan] (-45:2) circle (.1);
\draw[fill=cyan] (45:2) circle (.1);
\draw[fill=cyan] (135:2) circle (.1);
\draw[fill=cyan] (-135:2) circle (.1);
\node at (-45:2) [right]{$\scriptstyle{n}$};
\node at (45:2) [right]{$\scriptstyle{n}$};
\node at (135:2) [left]{$\scriptstyle{n}$};
\node at (-135:2) [left]{$\scriptstyle{n}$};
\node at (0,0) [above]{$\scriptstyle{i}$};}
\bigg\rangle_3\, .
\]
By taking a dotted line from west to east, 
we also obtain another basis
\[
\mathcal{B}'=\bigg\{\,
\Big\langle\tikz[baseline=-.6ex, scale=0.4]{
\draw[triple={[line width=1.4pt, white] in [line width=2.2pt, black] in [line width=5.4pt, white]}]
(0,-.5) -- (0,.5);
\draw[-<-=.5] (-1,1) -- (0,.5);
\draw[->-=.5] (-1,-1) -- (0,-.5);
\draw[-<-=.5] (0,.5) -- (1,1);
\draw[->-=.5] (0,-.5) -- (1,-1);
\draw[fill=white] (0,.5) circle (.1);
\draw[fill=white] (0,-.5) circle (.1);
\node at (-1,1) [below]{$\scriptstyle{n}$};
\node at (-1,-1) [above]{$\scriptstyle{n}$};
\node at (1,1) [below]{$\scriptstyle{n}$};
\node at (1,-1) [above]{$\scriptstyle{n}$};
\node at (0,0) [right]{$\scriptstyle{j}$};
}\,\Big\rangle_{\!3}
\,\Big\vert\,
0\leq j\leq n
\,\bigg\}\, .
\]
Therefore, 
the first recoupling formula is the change of basis from $\mathcal{B}$ to $\mathcal{B}'$. 
The value of a quantum $6j$ symbol
$
\begin{Bmatrix}
n^{-}&n^{+}&(j_0,j_0)\\
n^{-}&n^{+}&(i,i)
\end{Bmatrix}
$ 
is obtained by closing $A_2$ webs in the both sides using $
\Big\langle\tikz[baseline=-.6ex, scale=0.4]{
\draw[triple={[line width=1.4pt, white] in [line width=2.2pt, black] in [line width=5.4pt, white]}]
(0,-.5) -- (0,.5);
\draw[-<-=.5] (-1,1) -- (0,.5);
\draw[->-=.5] (-1,-1) -- (0,-.5);
\draw[-<-=.5] (0,.5) -- (1,1);
\draw[->-=.5] (0,-.5) -- (1,-1);
\draw[fill=white] (0,.5) circle (.1);
\draw[fill=white] (0,-.5) circle (.1);
\node at (-1,1) [below]{$\scriptstyle{n}$};
\node at (-1,-1) [above]{$\scriptstyle{n}$};
\node at (1,1) [below]{$\scriptstyle{n}$};
\node at (1,-1) [above]{$\scriptstyle{n}$};
\node at (0,0) [right]{$\scriptstyle{j_0}$};
}\,\Big\rangle_{\!3}
$ and using Lemma~\ref{bigon}.

Next, 
we prove the second formula.
An argument about the Euler number of a polygon developed in \cite{OhtsukiYamada97} shows the clasped $A_2$ web space $W((i^{+},j^{-}),(s^{+},s^{-}),(n^{+},0))$ is non-zero if and only if $(i,j)=(0,n)$. 
For any $A_2$ web in $W((t^{-},t^{+}),(n^{-},0),(s^{+},s^{-}),(n^{+},0))$, 
we take a dotted line as the first proof and replace intersecting webs with clasp of type $(i,j)$ for some $i$ and $j$. 
However, 
the web vanish except in the case of $(i,j)=(0,n)$. 
Thus, 
$W((t^{-},t^{+}),(n^{-},0),(s^{+},s^{-}),(n^{+},0))$ is the one-dimensional vector space and we can take its basis as 
$\bigg\{\Big\langle\tikz[baseline=-.6ex, scale=0.4]{
\draw[-<-=.5] (0,0) -- (1,0);
\draw[triple={[line width=1.4pt, white] in [line width=2.2pt, black] in [line width=5.4pt, white]}]
(-1,1) -- (0,0);
\draw[-<-=.5] (-1,-1) -- (0,0);
\draw[-<-=.5] (1,0) -- (2,1);
\draw[triple={[line width=1.4pt, white] in [line width=2.2pt, black] in [line width=5.4pt, white]}]
(1,0) -- (2,-1);
\draw[fill=white] (0,0) circle (.1);
\draw[fill=white] (1,0) circle (.1);
\node at (-1,1) [below]{$\scriptstyle{s}$};
\node at (-1,-1) [above]{$\scriptstyle{n}$};
\node at (2,1) [below]{$\scriptstyle{n}$};
\node at (2,-1) [above]{$\scriptstyle{t}$};
\node at (.5,0) [above]{$\scriptstyle{n}$};
}\,\Big\rangle_{\! 3}\bigg\}
$
and
$\bigg\{
\Big\langle\tikz[baseline=-.6ex, scale=0.4]{
\draw[-<-=.5] (0,-.5) -- (0,.5);
\draw[triple={[line width=1.4pt, white] in [line width=2.2pt, black] in [line width=5.4pt, white]}] 
(-1,1) -- (0,.5);
\draw[-<-=.5] (-1,-1) -- (0,-.5);
\draw[-<-=.5] (0,.5) -- (1,1);
\draw[triple={[line width=1.4pt, white] in [line width=2.2pt, black] in [line width=5.4pt, white]}] 
(0,-.5) -- (1,-1);
\draw[fill=white] (0,.5) circle (.1);
\draw[fill=white] (0,-.5) circle (.1);
\node at (-1,1) [below]{$\scriptstyle{s}$};
\node at (-1,-1) [above]{$\scriptstyle{n}$};
\node at (1,1) [below]{$\scriptstyle{n}$};
\node at (1,-1) [above]{$\scriptstyle{t}$};
\node at (0,0) [right]{$\scriptstyle{n}$};
}\,\Big\rangle_{\!3}\bigg\}$. 
Let us evaluate the $6j$ symbol 
$
\begin{Bmatrix}
n^{+}&s^{\pm}&n\\
n^{-}&t^{\mp}&n
\end{Bmatrix}
$ by closing $A_2$ webs in the both sides using 
$
\Big\langle\tikz[baseline=-.6ex, scale=0.4, xscale=-1]{
\draw[->-=.5] (0,0) -- (1,0);
\draw[->-=.5] (1,0) -- (2,1);
\draw[triple={[line width=1.4pt, white] in [line width=2.2pt, black] in [line width=5.4pt, white]}]
(1,0) -- (2,-1);
\draw[fill=white] (1,0) circle (.1);
\node at (2,1) [below]{$\scriptstyle{n}$};
\node at (2,-1) [above]{$\scriptstyle{t}$};
\node at (.5,0) [above]{$\scriptstyle{n}$};
}\,\Big\rangle_{\! 3}
$. 
By Lemma~\ref{bibigon},
\begin{align*}
\frac{\theta(n,n,(t,t))}{\Delta(n,0)}
\Big\langle\tikz[baseline=-.6ex, scale=0.4]{
\draw[-<-=.5] (0,0) -- (1,0);
\draw[triple={[line width=1.4pt, white] in [line width=2.2pt, black] in [line width=5.4pt, white]}]
(-1,1) -- (0,0);
\draw[-<-=.5] (-1,-1) -- (0,0);
\draw[fill=white] (0,0) circle (.1);
\node at (-1,1) [below]{$\scriptstyle{s}$};
\node at (-1,-1) [above]{$\scriptstyle{n}$};
\node at (.5,0) [above]{$\scriptstyle{n}$};
}\,\Big\rangle_{\! 3}
&=
\begin{Bmatrix}
n^{+}&s^{\pm}&n\\
n^{-}&t^{\mp}&n
\end{Bmatrix}
\Big\langle\tikz[baseline=-.6ex, scale=0.4]{
\draw[-<-=.5] (0,-.5) -- (0,.5);
\draw[triple={[line width=1.4pt, white] in [line width=2.2pt, black] in [line width=5.4pt, white]}] 
(-1,1) -- (0,.5);
\draw[-<-=.5] (-1,-1) -- (0,-.5);
\draw[triple={[line width=1.4pt, white] in [line width=2.2pt, black] in [line width=5.4pt, white]}] 
(0,-.5) to[out=east, in=south west] (2,0);
\draw[-<-=.5] (0,.5) to[out=east, in=north west] (2,0);
\draw[-<-=.5] (2,0) -- (3,0);
\draw[fill=white] (2,0) circle (.1);
\draw[fill=white] (0,.5) circle (.1);
\draw[fill=white] (0,-.5) circle (.1);
\node at (-1,1) [below]{$\scriptstyle{s}$};
\node at (-1,-1) [above]{$\scriptstyle{n}$};
\node at (2.5,0) [above]{$\scriptstyle{n}$};
\node at (1,1) {$\scriptstyle{n}$};
\node at (1,-1) {$\scriptstyle{t}$};
\node at (0,0) [right]{$\scriptstyle{n}$};
}\,\Big\rangle_{\!3}.
\end{align*}
We apply the first recoupling formula to the $A_2$ web in the right-hand side and Lemma~\ref{bigon}. 
Thus, 
we obtain
\[
\frac{\theta(n,n,(t,t))}{\Delta(n,0)}
=
\begin{Bmatrix}
n^{+}&s^{\pm}&n\\
n^{-}&t^{\mp}&n
\end{Bmatrix}
\begin{Bmatrix}
n^{-}&n^{+}&(s,s)\\
n^{-}&n^{+}&(t,t)
\end{Bmatrix}
\frac{\theta(n,n,(s,s))}{\Delta(s,s)}.
\]
\end{proof}

\section{Triangle-free basis for $W((n^{+},0),(n^{-},0),\dots,(n^{+},0),(n^{-},0))$}
In this section, 
we construct a basis of a clasped $A_2$ web space $W(n^{+},n^{-},\dots,n^{+},n^{-})$ from a certain type of triangulations called {\em triangle-free triangulations}. 

Let $(D,Q_k)$ be a disk with marked points $Q_k=\{\,q_0,q_1,\dots,q_{k-1}\,\}$. 
We call a part of the boundary of $D$ between $q_i$ and $q_{i+1}$ an {\em edge} of $(D,Q_k)$.
A {\em triangulation} of $(D,Q_k)$ is a set of $(k-3)$ simple arcs ended at $Q_k$ such that these arcs are not isotopic to other arcs and edges of $(D,Q_k)$. 
A {\em chord} is an arc in a triangulation and {\em short chord} a chord connecting $q_{i-1}$ and $q_{i+1}$ where $1\leq k\leq k-1$ and $q_k=q_0$. 
\begin{DEF}[Triangle-free triangulation~\cite{AdinFirerRoichman10}]
A triangulation of $(D,Q_k)$ is {\em triangle-free} if any triple of chords does not make a triangle.
\end{DEF}

We remark that a triangulation of $(D,Q_k)$ is triangle-free for $k>3$ if and only if it contains only two short chords, see \cite[Claim~2.3]{AdinFirerRoichman10}. 
A {\em proper coloring} of a triangle-free triangulation is a color of the chords with $\{\,0,1,\dots, k-4\,\}$ labeled by the following rules:
\begin{enumerate}
\item we choose one of the two short chords and label it by $0$,
\item if a triangle contains two chords, then these chords are labeled by $i$ and $i+1$.
\end{enumerate}
See, for example, Fig.~\ref{Fig:tfree}.
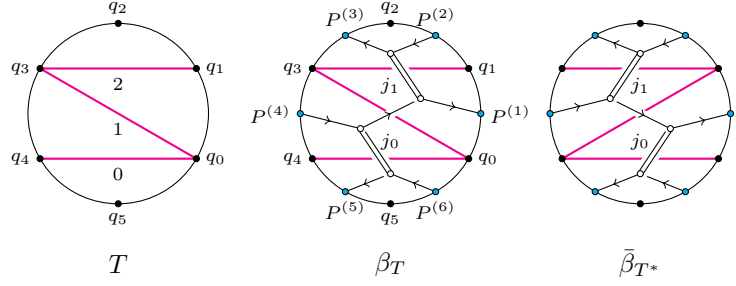
\begin{figure}
\begin{tikzpicture}[baseline=-.6ex, scale=0.4]
\draw[thick, magenta] (30:3) -- (150:3);
\draw[thick, magenta] (150:3) -- (-30:3);
\draw[thick, magenta] (-30:3) -- (-150:3);
\draw (0,0) circle (3);
\draw[fill=black] (30:3)  circle (.1);
\draw[fill=black] (90:3)  circle (.1);
\draw[fill=black] (150:3)  circle (.1);
\draw[fill=black] (-30:3)  circle (.1);
\draw[fill=black] (-90:3)  circle (.1);
\draw[fill=black] (-150:3)  circle (.1);
\node at (30:3) [right]{$\scriptstyle{q_1}$};
\node at (90:3) [above]{$\scriptstyle{q_2}$};
\node at (150:3) [left]{$\scriptstyle{q_3}$};
\node at (-30:3) [right]{$\scriptstyle{q_0}$};
\node at (-90:3) [below]{$\scriptstyle{q_5}$};
\node at (-150:3) [left]{$\scriptstyle{q_4}$};
\node at (0,-1.5) [below]{$\scriptstyle{0}$};
\node at (0,0) [below]{$\scriptstyle{1}$};
\node at (0,1.5) [below]{$\scriptstyle{2}$};
\node at (-90:5) {Triangle-free};
\end{tikzpicture}
\begin{tikzpicture}[baseline=-.6ex, scale=0.4]
\draw[thick, magenta] (-30:3) -- (90:3);
\draw[thick, magenta] (150:3) -- (-30:3);
\draw[thick, magenta] (-30:3) -- (-150:3);
\draw (0,0) circle (3);
\draw[fill=black] (30:3)  circle (.1);
\draw[fill=black] (90:3)  circle (.1);
\draw[fill=black] (150:3)  circle (.1);
\draw[fill=black] (-30:3)  circle (.1);
\draw[fill=black] (-90:3)  circle (.1);
\draw[fill=black] (-150:3)  circle (.1);
\node at (30:3) [right]{$\scriptstyle{q_1}$};
\node at (90:3) [above]{$\scriptstyle{q_2}$};
\node at (150:3) [left]{$\scriptstyle{q_3}$};
\node at (-30:3) [right]{$\scriptstyle{q_0}$};
\node at (-90:3) [below]{$\scriptstyle{q_5}$};
\node at (-150:3) [left]{$\scriptstyle{q_4}$};
\node at (0,-1.5) [above]{$\scriptstyle{2}$};
\node at (0,0) [above]{$\scriptstyle{1}$};
\node at (0,1.5) [right]{$\scriptstyle{0}$};
\node at (-90:5) {Triangle-free};
\end{tikzpicture}
\begin{tikzpicture}[baseline=-.6ex, scale=0.4]
\draw[thick, magenta] (-30:3) -- (90:3);
\draw[thick, magenta] (90:3) -- (-150:3);
\draw[thick, magenta] (-30:3) -- (-150:3);
\draw (0,0) circle (3);
\draw[fill=black] (30:3)  circle (.1);
\draw[fill=black] (90:3)  circle (.1);
\draw[fill=black] (150:3)  circle (.1);
\draw[fill=black] (-30:3)  circle (.1);
\draw[fill=black] (-90:3)  circle (.1);
\draw[fill=black] (-150:3)  circle (.1);
\node at (30:3) [right]{$\scriptstyle{q_1}$};
\node at (90:3) [above]{$\scriptstyle{q_2}$};
\node at (150:3) [left]{$\scriptstyle{q_3}$};
\node at (-30:3) [right]{$\scriptstyle{q_0}$};
\node at (-90:3) [below]{$\scriptstyle{q_5}$};
\node at (-150:3) [left]{$\scriptstyle{q_4}$};
\node at (-90:5) {Non triangle-free};
\end{tikzpicture}
\caption{Colored triangle-free triangulations for $(D,Q_6)$}
\label{Fig:tfree}
\end{figure}

Let us take a colored triangle-free triangulation $T$ of $(D,Q_{2k})$.
We take another set of marked points $P=P^{(1)}\cup P^{(2)}\cup \dots P^{(2k)}$ where $P^{(i)}=\{\,p_1^{(i)}, p_2^{(i)},\dots, p_n^{(i)}\,\}$ for $0 \leq i\leq 2k$. 
The marked points are arranged as $q_0<p_{i_1}^{(1)}<q_1<p_{i_2}^{(2)}<\dots<q_{2k-1}<p_{i_{2k}}^{(2k)}<q_0$ with respect to the cyclic order induced from the orientation of $\partial D$. 
We set the sign of marked points in $P^{(i)}$ is $\epsilon_i=(-1)^{i+1}$. 
Let us consider a clasped $A_2$ web space $W((n^{+},0),(n^{-},0),\dots,(n^{+},0),(n^{-},0))$ of $(D,P)$ such that the webs adjacent to $P^{(i)}$ has an $A_2$ clasp of type $(n^{\epsilon_i},0)$. 
We denote the clasped $A_2$ web space by $W_{2k}(n^\pm)$ for simplicity.

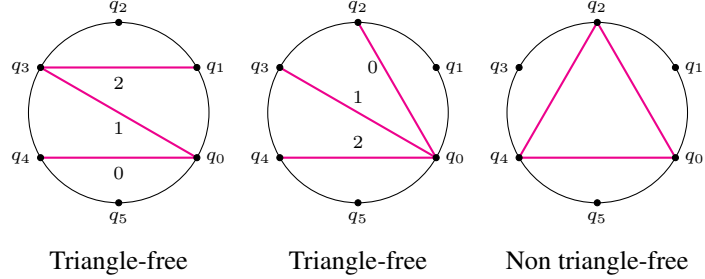
\begin{figure}
\begin{tikzpicture}[baseline=-.6ex, scale=0.4]
\draw[thick, magenta] (30:3) -- (150:3);
\draw[thick, magenta] (150:3) -- (-30:3);
\draw[thick, magenta] (-30:3) -- (-150:3);
\draw (0,0) circle (3);
\draw[fill=black] (30:3)  circle (.1);
\draw[fill=black] (90:3)  circle (.1);
\draw[fill=black] (150:3)  circle (.1);
\draw[fill=black] (-30:3)  circle (.1);
\draw[fill=black] (-90:3)  circle (.1);
\draw[fill=black] (-150:3)  circle (.1);
\node at (30:3) [right]{$\scriptstyle{q_1}$};
\node at (90:3) [above]{$\scriptstyle{q_2}$};
\node at (150:3) [left]{$\scriptstyle{q_3}$};
\node at (-30:3) [right]{$\scriptstyle{q_0}$};
\node at (-90:3) [below]{$\scriptstyle{q_5}$};
\node at (-150:3) [left]{$\scriptstyle{q_4}$};
\node at (0,-1.5) [below]{$\scriptstyle{0}$};
\node at (0,0) [below]{$\scriptstyle{1}$};
\node at (0,1.5) [below]{$\scriptstyle{2}$};
\node at (-90:5) {$T$};
\end{tikzpicture}
\begin{tikzpicture}[baseline=-.6ex, scale=0.4]
\draw[thick, magenta] (30:3) -- (150:3);
\draw[thick, magenta] (150:3) -- (-30:3);
\draw[thick, magenta] (-30:3) -- (-150:3);
\draw[-<-=.5] (240:3) -- (0,-2);
\draw[->-=.5] (300:3) -- (0,-2);
\draw[triple={[line width=1.4pt, white] in [line width=2.2pt, black] in [line width=5.4pt, white]}] (0,-2) -- (-1,-.5);
\draw[->-=.5] (180:3) -- (-1,-.5);
\draw[->-=.5, white, double=black, double distance=0.4pt, ultra thick] (-1,-.5) -- (1,.5);
\draw[-<-=.5] (0:3) -- (1,.5);
\draw[triple={[line width=1.4pt, white] in [line width=2.2pt, black] in [line width=5.4pt, white]}] (1,.5) -- (0,2);
\draw[->-=.5] (60:3) -- (0,2);
\draw[-<-=.5] (120:3) -- (0,2);
\draw (0,0) circle (3);
\draw[fill=white] (0,-2) circle (.1);
\draw[fill=white] (-1,-.5) circle (.1);
\draw[fill=white] (1,.5) circle (.1);
\draw[fill=white] (0,2) circle (.1);
\draw[fill=cyan] (0:3)  circle (.1);
\draw[fill=cyan] (60:3)  circle (.1);
\draw[fill=cyan] (120:3)  circle (.1);
\draw[fill=cyan] (180:3)  circle (.1);
\draw[fill=cyan] (240:3)  circle (.1);
\draw[fill=cyan] (300:3)  circle (.1);
\node at (0:3) [right]{$\scriptstyle{P^{(1)}}$};
\node at (60:3) [above]{$\scriptstyle{P^{(2)}}$};
\node at (120:3) [above]{$\scriptstyle{P^{(3)}}$};
\node at (180:3) [left]{$\scriptstyle{P^{(4)}}$};
\node at (240:3) [below]{$\scriptstyle{P^{(5)}}$};
\node at (300:3) [below]{$\scriptstyle{P^{(6)}}$};
\draw[fill=black] (30:3)  circle (.1);
\draw[fill=black] (90:3)  circle (.1);
\draw[fill=black] (150:3)  circle (.1);
\draw[fill=black] (-30:3)  circle (.1);
\draw[fill=black] (-90:3)  circle (.1);
\draw[fill=black] (-150:3)  circle (.1);
\node at (30:3) [right]{$\scriptstyle{q_1}$};
\node at (90:3) [above]{$\scriptstyle{q_2}$};
\node at (150:3) [left]{$\scriptstyle{q_3}$};
\node at (-30:3) [right]{$\scriptstyle{q_0}$};
\node at (-90:3) [below]{$\scriptstyle{q_5}$};
\node at (-150:3) [left]{$\scriptstyle{q_4}$};
\node at (0,-1.5) [above]{$\scriptstyle{j_0}$};
\node at (0,1.5) [below]{$\scriptstyle{j_1}$};
\node at (-90:5) {$\beta_T$};
\end{tikzpicture}
\begin{tikzpicture}[baseline=-.6ex, scale=0.4, xscale=-1]
\draw[thick, magenta] (30:3) -- (150:3);
\draw[thick, magenta] (150:3) -- (-30:3);
\draw[thick, magenta] (-30:3) -- (-150:3);
\draw[->-=.5] (240:3) -- (0,-2);
\draw[-<-=.5] (300:3) -- (0,-2);
\draw[triple={[line width=1.4pt, white] in [line width=2.2pt, black] in [line width=5.4pt, white]}] (0,-2) -- (-1,-.5);
\draw[-<-=.5] (180:3) -- (-1,-.5);
\draw[-<-=.5, white, double=black, double distance=0.4pt, ultra thick] (-1,-.5) -- (1,.5);
\draw[->-=.5] (0:3) -- (1,.5);
\draw[triple={[line width=1.4pt, white] in [line width=2.2pt, black] in [line width=5.4pt, white]}] (1,.5) -- (0,2);
\draw[-<-=.5] (60:3) -- (0,2);
\draw[->-=.5] (120:3) -- (0,2);
\draw (0,0) circle (3);
\draw[fill=white] (0,-2) circle (.1);
\draw[fill=white] (-1,-.5) circle (.1);
\draw[fill=white] (1,.5) circle (.1);
\draw[fill=white] (0,2) circle (.1);
\draw[fill=cyan] (0:3)  circle (.1);
\draw[fill=cyan] (60:3)  circle (.1);
\draw[fill=cyan] (120:3)  circle (.1);
\draw[fill=cyan] (180:3)  circle (.1);
\draw[fill=cyan] (240:3)  circle (.1);
\draw[fill=cyan] (300:3)  circle (.1);
\draw[fill=black] (30:3)  circle (.1);
\draw[fill=black] (90:3)  circle (.1);
\draw[fill=black] (150:3)  circle (.1);
\draw[fill=black] (-30:3)  circle (.1);
\draw[fill=black] (-90:3)  circle (.1);
\draw[fill=black] (-150:3)  circle (.1);
\node at (0,-1.5) [above]{$\scriptstyle{j_0}$};
\node at (0,1.5) [below]{$\scriptstyle{j_1}$};
\node at (-90:5) {$\bar{\beta}_{T^*}$};
\end{tikzpicture}
\caption{Colored triangle-free triangulations for $(D,Q_6)$}
\label{Fig:tfreebasis}
\end{figure}

\begin{DEF}[Triangle-free basis]\label{tfreebasis}
A {\em triangle-free basis web} $\beta_{T}(j_0,j_1,\dots,j_{k-2})$ is a clasped $A_2$ web obtained by patching the following pieces of $A_2$ webs: 
\begin{enumerate}
\item 
For a triangle containing two chords colored with $2i$ and $2i+1$,\\
\begin{tikzpicture}[baseline=-.6ex, scale=0.4]
\draw[triple={[line width=1.4pt, white] in [line width=2.2pt, black] in [line width=5.4pt, white]}]
(15:4) to (0:4);
\draw (15:4) -- (15:6);
\draw (15:4) -- (30:4);
\draw[thick, magenta] (0:0) -- (0:6);
\draw[thick, magenta] (0:0) -- (30:6);
\draw (0:6) arc (0:30:6);
\draw[fill=white] (15:4)  circle (.1);
\draw[fill=cyan] (15:6)  circle (.1);
\draw[fill=black] (0:6)  circle (.1);
\draw[fill=black] (30:6)  circle (.1);
\draw[fill=black] (0:0)  circle (.1);
\node at (20:4) [left]{$\scriptstyle{n}$};
\node at (15:5) [above]{$\scriptstyle{n}$};
\node at (5:4) [left]{$\scriptstyle{j_i}$};
\node at (30:2) [above]{$\scriptstyle{2i+1}$};
\node at (0:2) [below]{$\scriptstyle{2i}$};
\end{tikzpicture}
\ or\ 
\begin{tikzpicture}[baseline=-.6ex, scale=0.4]
\draw (15:4) -- (0:4);
\draw (15:4) -- (15:6);
\draw[triple={[line width=1.4pt, white] in [line width=2.2pt, black] in [line width=5.4pt, white]}] 
(15:4) -- (30:4);
\draw[thick, magenta] (0:0) -- (0:6);
\draw[thick, magenta] (0:0) -- (30:6);
\draw (0:6) arc (0:30:6);
\draw[fill=white] (15:4)  circle (.1);
\draw[fill=cyan] (15:6)  circle (.1);
\draw[fill=black] (0:6)  circle (.1);
\draw[fill=black] (30:6)  circle (.1);
\draw[fill=black] (0:0)  circle (.1);
\node at (20:4) [left]{$\scriptstyle{j_i}$};
\node at (15:5) [above]{$\scriptstyle{n}$};
\node at (5:4) [left]{$\scriptstyle{n}$};
\node at (30:2) [above]{$\scriptstyle{2i}$};
\node at (0:2) [below]{$\scriptstyle{2i+1}$};
\end{tikzpicture}
.
\item
For a triangle containing only one chord colored by $2i=0,2k-4$,\\
\begin{tikzpicture}[baseline=-.6ex, scale=0.4]
\draw (3,1) -- (1,.5);
\draw (3,1) -- (5,.5);
\draw[triple={[line width=1.4pt, white] in [line width=2.2pt, black] in [line width=5.4pt, white]}] 
(3,1) -- (3,2);
\draw[thick, magenta] (0,2) -- (6,2);
\draw (0,2) to[out=south, in=west] (3,0) to[out=east, in=south] (6,2);
\draw[fill=white] (3,1)  circle (.1);
\draw[fill=cyan] (1,.5)  circle (.1);
\draw[fill=cyan] (5,.5)  circle (.1);
\draw[fill=black] (0,2)  circle (.1);
\draw[fill=black] (6,2)  circle (.1);
\draw[fill=black] (3,0)  circle (.1);
\node at (3,1) [above right]{$\scriptstyle{j_i}$};
\node at (3,1) [below right]{$\scriptstyle{n}$};
\node at (3,1) [below left]{$\scriptstyle{n}$};
\node at (1.5,2) [above]{$\scriptstyle{2i}$};
\end{tikzpicture}
.
\end{enumerate}
A {\em triangle-free basis} of $W_{2k}(n^\pm)$ with respect to $T$ is $\mathcal{B}_T=\{\,\beta_T(j_0,j_1,\dots,j_{k-2}) \mid 0 \leq j_i \leq n\,\}$, see Fig.~\ref{Fig:tfreebasis}
\end{DEF}

\begin{THM}
$\mathcal{B}_T$ is a basis of $W_{2k}(n^\pm)$ for any colored triangle-free triangulation $T$.
\end{THM}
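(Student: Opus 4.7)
The plan is to prove linear independence and spanning of $W_{2k}(n^\pm)$ by $\mathcal{B}_T$ separately.

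For linear independence, I adapt the dual-pairing argument from the proof of Theorem~\ref{recoupling}. Let $T^*$ denote the mirror triangulation of $T$ and $\bar\beta_{T^*}(j'_0,\ldots,j'_{k-2})$ the corresponding mirrored basis webs, as indicated in Figure~\ref{Fig:tfreebasis}. Define a bilinear pairing on $\mathcal{B}_T \times \mathcal{B}_{T^*}$ by
\[
\Phi\bigl(\beta_T(j_*),\,\bar\beta_{T^*}(j'_*)\bigr)
\;=\;\bigl\langle\,\beta_T(j_*)\cup\bar\beta_{T^*}(j'_*)\,\bigr\rangle_3,
\]
where the union is the closed $A_2$ web on $S^2$ obtained by gluing along $\partial D$. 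Each white trivalent vertex of $\beta_T$ indexed by $j_i$ is paired with its mirror vertex through a bigon whose two inner arcs carry the colors $j_i$ and $j'_i$. Iterated application of Lemma~\ref{bigon} and Lemma~\ref{bibigon} to every such bigon yields a factor $\delta_{j_i,j'_i}$ times a nonzero $\theta/\Delta$ quotient; after all bigons are collapsed, the remaining closed web reduces to a product of $\theta$'s and $\Delta$'s, nonzero by Lemma~\ref{value}. Hence $\Phi$ is diagonal with nonzero diagonal entries, which forces $\mathcal{B}_T$ to be linearly independent.

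For spanning, I induct on $k$. The base case $k=2$ is exactly the content of the first half of the proof of Theorem~\ref{recoupling}: the space $W_4(n^\pm)$ is spanned by $n+1$ webs indexed by a single color $0 \le j_0 \le n$, which coincides with $\mathcal{B}_T$ for the unique triangulation of a square. For $k>2$, choose a long chord $c$ of $T$; one exists since a triangle-free triangulation of a $2k$-gon has $2k-3$ chords but only two short chords. Cutting along $c$ splits $(D,Q_{2k})$ into two smaller clasped polygons $D'$, $D''$ on which $T$ restricts to colored triangle-free triangulations $T'$, $T''$. For any $w \in W_{2k}(n^\pm)$, use the dotted-line/clasp-expansion technique from the proof of Theorem~\ref{recoupling} (via Definition~\ref{doubleA2clasp}) to decompose
\[
w \;=\; \sum_{i=0}^{n} w'_i \star_c w''_i,
\]
where $w'_i$ and $w''_i$ are clasped $A_2$ webs on $D'$ and $D''$, joined by a clasp of color $i$ at $c$. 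By the inductive hypothesis each $w'_i, w''_i$ expands in $\mathcal{B}_{T'}$, $\mathcal{B}_{T''}$; reassembling along $c$, with $i$ playing the role of one of the $j_*$ indices in the triangulation containing $c$, writes $w$ as a combination of elements of $\mathcal{B}_T$.

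The main obstacle is the bookkeeping in the spanning step: one must verify that $c$ can be chosen so that the cut produces compatible clasped structures on $D'$ and $D''$, that $T'$ and $T''$ are indeed colored triangle-free in our sense, and that the indices $(j_0,\ldots,j_{k-2})$ split cleanly between the two halves plus the chord $c$. The skein-theoretic ingredients — Lemmas~\ref{bigon}, \ref{bibigon}, Definition~\ref{doubleA2clasp}, and Theorem~\ref{recoupling} — are already developed in Section~2, so no new machinery is required.
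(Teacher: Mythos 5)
Your linear-independence argument is essentially the paper's: pair $\mathcal{B}_T$ against the mirrored, orientation-reversed webs $\bar\beta_{T^*}$ and collapse bigons with Lemmas~\ref{bigon} and~\ref{bibigon} to see the pairing matrix is diagonal with nonzero entries. That half is fine and matches the intended proof.

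The spanning half, however, has a genuine gap. You induct on $k$ by cutting $(D,Q_{2k})$ along a long chord $c$ and expanding the webs crossing $c$ into clasps of type $(i,i)$, $0\le i\le n$. But the two pieces $D'$ and $D''$ you produce are not instances of the statement you are inducting on: each acquires a new boundary ``point'' carrying a clasp of type $(i^{+},i^{-})$ (a pair of oppositely oriented $i$-colored families), whereas the inductive hypothesis only covers spaces of the form $W_{2k'}(n^\pm)$, in which every boundary clasp is of type $(n^{\epsilon},0)$. Likewise the restrictions of $T$ to $D'$ and $D''$ need not be colored triangle-free triangulations in the sense of the paper (the proper coloring must start at $0$ on a short chord and increase across triangles, and the two-short-chord count is not inherited by the halves). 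So the inductive step does not close as stated; you would have to strengthen the statement to a family of clasped web spaces with one distinguished $(i^{+},i^{-})$ boundary clasp and define bases for those, which is precisely the bookkeeping you defer. There is also a finer mismatch: in $\beta_T$ the edge crossing a given chord carries either the variable color $j_i$ or the fixed color $n$ depending on the parity of the chord's color (see Definition~\ref{tfreebasis}), so a cut along an arbitrary long chord does not automatically identify the expansion index $i$ with one of the $j_*$. The paper avoids all of this by not inducting on $k$: it processes the chords in the order of their colors, starting from the short chord colored $0$, and at each step uses the dotted-line clasp expansion of Theorem~\ref{recoupling} locally inside the next triangle, which produces exactly the pieces (1) and (2) of Definition~\ref{tfreebasis}. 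Reorganizing your spanning argument along the color order, rather than as a divide-and-conquer on $k$, removes the gap.
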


\begin{proof}
Fix a colored triangle-free triangulation $T$ and take any $A_2$ web $w$. 
In a manner similar to the proof of Theorem~\ref{recoupling}, 
The $A_2$ webs intersecting the short chord with color $0$ can be replaced by a sum of $A_2$ clasps. 
Then, 
$w$ is represented as a sum of $A_2$ webs containing trivalent graphs in (2) of Definition~\ref{tfreebasis}. 
Next, 
we consider $A_2$ webs in the triangle containing chords with color $0$ and $1$. 
Then, 
these $A_2$ webs are replaced by trivalent graph in (1) of Definition~\ref{tfreebasis}. 
In such a way, 
we can represent $w$ by a sum of triangle-free basis webs. 
Assume that $\sum_{0\leq j_i \leq n}a_{j_0,j_1,\dots,j_{k-2}}\beta_T(j_0,j_1,\dots,j_{k-2})=0$. 
Let $T^*$ be the mirror image of the colored triangle-free triangulation $T$. 
For any clasped $A_2$ web $w$ in $W_{2k}(n^\pm)$, 
$\bar{w}$ denote a clasped $A_2$ web in $W_{2k}(n^\mp)$ obtained by reversing the direction of $w$. 
It is easy to show that the pairing $\langle\,\beta_T(j_0,j_1,\dots,j_{k-2})\mid\bar{\beta}_{T^*}(s_0,s_1,\dots,s_{k-2})\,\rangle_3$ is non-zero if and only if $j_i=s_i$ for all $i=0,1,\dots, k-2$. 
This pairing is a map obtained by connecting each endpoint of $\beta_T(j_0,j_1,\dots,j_{k-2})$ and the endpoint of $\bar{\beta}_{T^*}(s_0,s_1,\dots,s_{k-2})$ in its mirror point. 
We can prove by induction on $k$ and using Lemma~\ref{bigon} and \ref{bibigon}.
\end{proof}

\section{A representation of the Pure Braid Group $P_{2k}$}
In this section, 
we give a definition of representations $\rho_n$ of the pure braid group of $2k$ strands and compute it for all standard generators. 

\subsection{Definition of $\rho_n$}
Let $T$ be a colored triangle-free triangulation of $(D,Q_{2k})$ and $\mathcal{B}_T$ the associated triangle-free basis of $W_{2k}(n^{\pm})$. 
In this section, 
we take an orientation of $\partial D$ in the clockwise direction for convenience. 
In fact, 
$q_0,q_1,\dots,q_{2k-1}$ and $P^{(1)}, P^{(2)},\dots,P^{(2k)}$ are arranged clockwise.
Moreover, 
we depict $D$ square such that $q_0$ is in the bottom side and other points of $Q_{2k}$ and $P$ in the upper side, see below:\\
\begin{center}
\begin{tikzpicture}[baseline=-.6ex, scale=0.5]
\draw[rounded corners] (0,-2) rectangle (7,0);
\draw[thick, magenta] (1.5,0) to[out=south, in=south] (4.5,0);
\draw[thick, magenta] (2.5,0) to[out=south, in=south] (4.5,0);
\draw[thick, magenta] (1.5,0) to[out=south, in=south] (5.5,0);
\draw[fill=cyan] (1,0)  circle (.1);
\draw[fill=cyan] (2,0)  circle (.1);
\draw[fill=cyan] (3,0)  circle (.1);
\draw[fill=cyan] (4,0)  circle (.1);
\draw[fill=cyan] (5,0)  circle (.1);
\draw[fill=cyan] (6,0)  circle (.1);
\draw[fill=black] (3.5,-2)  circle (.1);
\draw[fill=black] (1.5,0)  circle (.1);
\draw[fill=black] (2.5,0)  circle (.1);
\draw[fill=black] (3.5,0)  circle (.1);
\draw[fill=black] (4.5,0)  circle (.1);
\draw[fill=black] (5.5,0)  circle (.1);
\node at (3.5,-2) [above]{$\scriptstyle{q_0}$};
\node at (1.5,0) [above]{$\scriptstyle{q_1}$};
\node at (2.5,0) [above]{$\scriptstyle{q_2}$};
\node at (3.5,0) [above]{$\scriptstyle{q_3}$};
\node at (4.5,0) [above]{$\scriptstyle{q_4}$};
\node at (5.5,0) [above]{$\scriptstyle{q_5}$};
\node at (0,-1) [left]{$(D,Q_6)=\ $};
\end{tikzpicture}
\end{center}
The braid group of $m$ strands has a presentation~\cite{Artin47}
\[
B_m=\Bigg\langle\, \sigma_1,\sigma_2,\dots,\sigma_{m-1}\,\Bigg\vert\,
\begin{aligned} 
\sigma_i\sigma_{i+1}\sigma_i&=\sigma_{i+1}\sigma_i\sigma_{i+1}\quad(1\leq i< m-1)\\
\sigma_i\sigma_j&=\sigma_j\sigma_i\quad(\left|i-j\right|>1)
\end{aligned}
\,\Bigg\rangle. 
\]
The generator $\sigma_i$ represents the following braid diagram:\\
\begin{center}
\begin{tikzpicture}[baseline=-.6ex, scale=0.5]
\draw[white, double=black, double distance=0.4pt, ultra thick] 
(.5,-1) to[out=north, in=south] (-.5,1);
\draw[white, double=black, double distance=0.4pt, ultra thick] 
(-.5,-1) to[out=north, in=south] (.5,1);
\draw (-1,-1) -- (-1,1);
\draw (-3,-1) -- (-3,1);
\draw (1,-1) -- (1,1);
\draw (3,-1) -- (3,1);
\node at (-2,0) {$\cdots$};
\node at (2,0) {$\cdots$};
\node at (-3,0) [left]{$\sigma_i=\ $};
\node at (-.5,1) [above]{$\scriptstyle{i}$};
\node at (.5,1) [above]{$\scriptstyle{i+1}$};
\end{tikzpicture}
\ for $i=1,2,\dots,m-1$.
\end{center}
The composition of two braid diagrams $ab$ is given by gluing $b$ on the top of $a$.
The pure braid group $P_m$ is the kernel of the map $B_m\to S_m$ where $S_m$ is the symmetric group. 
It is well-known that a generating set of $P_m$ is given by
\[
\{\,
A_{i,j}=\sigma_{j-1}^{-1}\sigma_{j-2}^{-1}\dots\sigma_{i+1}^{-1}\sigma_i^2\sigma_{i+1}\dots\sigma_{j-2}\sigma_{j-1}
\mid 1\leq i<j \leq m
\,\}
\]
 and see \cite{Birman74} for detail on the presentation.
The braid diagram of the generator $A_{i,j}$ is the following:
\begin{center}
\begin{tikzpicture}[baseline=-.6ex, scale=0.5]
\draw (-2,-1) -- (-2,1);
\draw (-1,-1) -- (-1,1);
\draw (0,-1) -- (0,1);
\draw (1,-1) -- (1,1);
\draw (-.2,.0) to[out=north, in=south] (-.5,.3) -- (-.5,1);
\draw[white, double=black, double distance=0.4pt, ultra thick] (1.5,-1) to[out=north, in=east] (0,-.3);
\draw[white, double=black, double distance=0.4pt, ultra thick] (0,-.3) to[out=west, in=south] (-.5,0);
\draw[white, double=black, double distance=0.4pt, ultra thick] (-.5,0) to[out=north, in=west] (0,.3);
\draw[white, double=black, double distance=0.4pt, ultra thick] (-.5,0) to[out=north, in=west] (0,.3) to[out=east, in=south] (1.5,1);
\draw[white, double=black, double distance=0.4pt, ultra thick]  (-.5,-1) -- (-.5,-.3) to[out=north, in=south] (-.2,.0);
\draw (2,-1) -- (2,1);
\draw (3,-1) -- (3,1);
\node at (-1.5,0) {$\scriptstyle{\cdots}$};
\node at (.5,0) {$\scriptstyle{\cdots}$};
\node at (2.5,0) {$\scriptstyle{\cdots}$};
\node at (-2,0) [left]{$A_{i,j}=\ $};
\node at (-.5,1) [above]{$\scriptstyle{i}$};
\node at (1.5,1) [above]{$\scriptstyle{j}$};
\node at (-2,1) [above]{$\scriptstyle{1}$};
\node at (3,1) [above]{$\scriptstyle{m}$};
\end{tikzpicture}
\ for $1\leq i<j\leq m$.
\end{center}
Let us consider an action of the pure braid group $P_{2k}$ of $2k$ strands on $W_{2k}(n^{\pm})$. 
For any element in $P_{2k}$, 
we depict the braid diagram of it and replace each strand with $n$-parallelized strands. 
The action is obtained by gluing this parallelized braid diagram on the top of $(D,P)$, see Fig.~\ref{Pnrep} in the case of $k=3$. 
Of course, 
the bottom and the top of each strand of any element in $P_{2k}$ has the same index. 
Thus, 
the above operation preserves elements in $W_{2k}(n^{\pm})$. 
We denote this action by $\rho_n\colon P_{2k}\to GL(W_{2k}(n^{\pm}))$. 
The triangle-free basis $\mathcal{B}_T$ is naturally ordered by the lexicographic order of $(j_0,j_1,\dots,j_{k-2})$. 
It derives the matrix representation of $\rho_{n}^{T}\colon P_{2k} \to GL_{(n+1)^{k-2}}(\mathbb{C}(q^{\frac{1}{6}}))$.
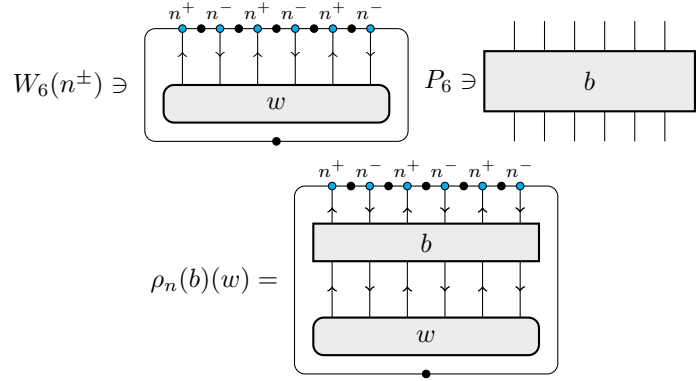
\begin{figure}
\begin{tikzpicture}[baseline=-.6ex, scale=0.5, yshift=3cm]
\draw[rounded corners] (0,-3) rectangle (7,0);
\draw [-<-=.5] (1,0) -- (1,-1.5);
\draw [->-=.5] (2,0) -- (2,-1.5);
\draw [-<-=.5] (3,0) -- (3,-1.5);
\draw [->-=.5] (4,0) -- (4,-1.5);
\draw [-<-=.5] (5,0) -- (5,-1.5);
\draw [->-=.5] (6,0) -- (6,-1.5);
\draw[fill=cyan] (1,0)  circle (.1);
\draw[fill=cyan] (2,0)  circle (.1);
\draw[fill=cyan] (3,0)  circle (.1);
\draw[fill=cyan] (4,0)  circle (.1);
\draw[fill=cyan] (5,0)  circle (.1);
\draw[fill=cyan] (6,0)  circle (.1);
\node at (1,0) [above]{$\scriptstyle{n^{+}}$};
\node at (2,0) [above]{$\scriptstyle{n^{-}}$};
\node at (3,0) [above]{$\scriptstyle{n^{+}}$};
\node at (4,0) [above]{$\scriptstyle{n^{-}}$};
\node at (5,0) [above]{$\scriptstyle{n^{+}}$};
\node at (6,0) [above]{$\scriptstyle{n^{-}}$};
\draw[fill=black] (3.5,-3)  circle (.1);
\draw[fill=black] (1.5,0)  circle (.1);
\draw[fill=black] (2.5,0)  circle (.1);
\draw[fill=black] (3.5,0)  circle (.1);
\draw[fill=black] (4.5,0)  circle (.1);
\draw[fill=black] (5.5,0)  circle (.1);
\draw[thick, rounded corners, fill=lightgray!30] (.5,-2.5) rectangle (6.5,-1.5);
\node at (3.5,-2) {$w$};
\node at (0,-1.5) [left]{$W_{6}(n^{\pm})\ni\ $};
\end{tikzpicture}
\begin{tikzpicture}[baseline=-.6ex, scale=0.4]
\draw (1,0) -- (1,4);
\draw (2,0) -- (2,4);
\draw (3,0) -- (3,4);
\draw (4,0) -- (4,4);
\draw (5,0) -- (5,4);
\draw (6,0) -- (6,4);
\draw[thick, fill=lightgray!30] (0,1) rectangle (7,3);
\node at (3.5,2) {$b$};
\node at (-1,2) {$P_{6}\ni\ $};
\end{tikzpicture}\\
\begin{tikzpicture}[baseline=-.6ex, scale=0.5]
\draw[rounded corners] (0,-5) rectangle (7,0);
\draw [-<-=.2, -<-=.8] (1,0) -- (1,-3.5);
\draw [->-=.2, ->-=.8] (2,0) -- (2,-3.5);
\draw [-<-=.2, -<-=.8] (3,0) -- (3,-3.5);
\draw [->-=.2, ->-=.8] (4,0) -- (4,-3.5);
\draw [-<-=.2, -<-=.8] (5,0) -- (5,-3.5);
\draw [->-=.2, ->-=.8] (6,0) -- (6,-3.5);
\draw[fill=cyan] (1,0)  circle (.1);
\draw[fill=cyan] (2,0)  circle (.1);
\draw[fill=cyan] (3,0)  circle (.1);
\draw[fill=cyan] (4,0)  circle (.1);
\draw[fill=cyan] (5,0)  circle (.1);
\draw[fill=cyan] (6,0)  circle (.1);
\node at (1,0) [above]{$\scriptstyle{n^{+}}$};
\node at (2,0) [above]{$\scriptstyle{n^{-}}$};
\node at (3,0) [above]{$\scriptstyle{n^{+}}$};
\node at (4,0) [above]{$\scriptstyle{n^{-}}$};
\node at (5,0) [above]{$\scriptstyle{n^{+}}$};
\node at (6,0) [above]{$\scriptstyle{n^{-}}$};
\draw[fill=black] (3.5,-5)  circle (.1);
\draw[fill=black] (1.5,0)  circle (.1);
\draw[fill=black] (2.5,0)  circle (.1);
\draw[fill=black] (3.5,0)  circle (.1);
\draw[fill=black] (4.5,0)  circle (.1);
\draw[fill=black] (5.5,0)  circle (.1);
\draw[thick, fill=lightgray!30] (.5,-2) rectangle (6.5,-1);
\node at (3.5,-1.5) {$b$};
\draw[thick, rounded corners, fill=lightgray!30] (.5,-4.5) rectangle (6.5,-3.5);
\node at (3.5,-4) {$w$};
\node at (0,-2.5) [left]{$\rho_n(b)(w)=\ $};
\end{tikzpicture}
\caption{Representation of $P_{6}$}
\label{Pnrep}
\end{figure}
For example, 
we can easily compute the matrix $\rho_{n}^{T}(\sigma_1^2)$. 
Let $T$ be a colored triangle-free triangulation of $(D,Q_{2k})$ such that it has a short chord from $q_0$ to $q_2$ colored with $0$. 
Then, 
$\rho_{n}^{T}(\sigma_1^2)$ can be computed by using the following formula.
\begin{LEM}[{\cite[Lemma~3.7]{Yuasa17b}}]\label{twist}
For any $j=0,1,\dots, n$, 
\begin{itemize}
\item
$
\bigg\langle\,\tikz[baseline=-.6ex]{
\draw[triple={[line width=1.4pt, white] in [line width=2.2pt, black] in [line width=5.4pt, white]}]
(-1.5,0) -- (-1,0);
\draw (-1,0) to[out=north east, in=west] (-.5,.4);
\draw (-1,0) to[out=south east, in=west] (-.5,-.4);
\draw[->-=1, white, double=black, double distance=0.4pt, ultra thick] 
(-.5,.4) to[out=east, in=west] (.0,-.4);
\draw[-<-=1, white, double=black, double distance=0.4pt, ultra thick] 
(-.5,-.4) to[out=east, in=west] (.0,.4);
\draw[white, double=black, double distance=0.4pt, ultra thick] 
(0,.4) to[out=east, in=west] (.5,-.4);
\draw[white, double=black, double distance=0.4pt, ultra thick] 
(0,-.4) to[out=east, in=west] (.5,.4);
\draw[fill=white] (-1,0) circle (.1);
\node at (-.5,-.5) [left]{$\scriptstyle{n}$};
\node at (-.5,.5) [left]{$\scriptstyle{n}$};
\node at (-1.5,0) [above]{$\scriptstyle{j}$};
}\,\bigg\rangle_{\!3}
=
q^{\frac{2}{3}(n^2+3n)-j^2-2j}
\bigg\langle\,\tikz[baseline=-.6ex]{
\draw[triple={[line width=1.4pt, white] in [line width=2.2pt, black] in [line width=5.4pt, white]}]
(-1.5,0) -- (-1,0);
\draw[->-=.5] (-1,0) to[out=north east, in=west] (-.5,.4);
\draw[-<-=.5] (-1,0) to[out=south east, in=west] (-.5,-.4);
\draw[fill=white] (-1,0) circle (.1);
\node at (-.5,-.5) [left]{$\scriptstyle{n}$};
\node at (-.5,.5) [left]{$\scriptstyle{n}$};
\node at (-1.5,0) [above]{$\scriptstyle{j}$};
}\,\bigg\rangle_{\!3}$, 
\item
$\bigg\langle\,\tikz[baseline=-.6ex]{
\draw[triple={[line width=1.4pt, white] in [line width=2.2pt, black] in [line width=5.4pt, white]}]
(-1.5,0) -- (-1,0);
\draw (-1,0) to[out=north east, in=west] (-.5,.4);
\draw (-1,0) to[out=south east, in=west] (-.5,-.4);
\draw[-<-=1, white, double=black, double distance=0.4pt, ultra thick] 
(-.5,-.4) to[out=east, in=west] (.0,.4);
\draw[->-=1, white, double=black, double distance=0.4pt, ultra thick] 
(-.5,.4) to[out=east, in=west] (.0,-.4);
\draw[white, double=black, double distance=0.4pt, ultra thick] 
(0,-.4) to[out=east, in=west] (.5,.4);
\draw[white, double=black, double distance=0.4pt, ultra thick] 
(0,.4) to[out=east, in=west] (.5,-.4);
\draw[fill=white] (-1,0) circle (.1);
\node at (-.5,-.5) [left]{$\scriptstyle{n}$};
\node at (-.5,.5) [left]{$\scriptstyle{n}$};
\node at (-1.5,0) [above]{$\scriptstyle{j}$};
}\,\bigg\rangle_{\!3}
=
q^{-\frac{2}{3}(n^2+3n)+j^2+2j}
\bigg\langle\,\tikz[baseline=-.6ex]{
\draw[triple={[line width=1.4pt, white] in [line width=2.2pt, black] in [line width=5.4pt, white]}]
(-1.5,0) -- (-1,0);
\draw[->-=.5] (-1,0) to[out=north east, in=west] (-.5,.4);
\draw[-<-=.5] (-1,0) to[out=south east, in=west] (-.5,-.4);
\draw[fill=white] (-1,0) circle (.1);
\node at (-.5,-.5) [left]{$\scriptstyle{n}$};
\node at (-.5,.5) [left]{$\scriptstyle{n}$};
\node at (-1.5,0) [above]{$\scriptstyle{j}$};
}\,\bigg\rangle_{\!3}$\, .
\end{itemize}
\end{LEM}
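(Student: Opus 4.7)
The plan is to recognize the left-hand side as a scalar multiple of the right-hand side via a Schur-type argument, and then to identify the scalar by a framing/ribbon computation. The trivalent vertex with clasp of type $j$ on the pair $(n^{+}, n^{-})$ represents the projection onto the $V_{(j,j)}$-isotypic component inside $V_n \otimes V_n^{*}$; this is consistent with Lemma~\ref{value} and with the absorption property of the clasp. Because the full twist commutes with the quantum-group action and $V_{(j,j)}$ is irreducible, the twist restricts to a scalar $\lambda_{j} \in \mathbb{Q}(q^{\frac{1}{6}})$ on the image of this projection. Thus
\[
\bigg\langle\text{twist with clasp } j\bigg\rangle_{\!3} = \lambda_{j} \bigg\langle\text{untwisted clasp } j\bigg\rangle_{\!3},
\]
and the task reduces to computing $\lambda_{j}$.

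To compute $\lambda_{j}$, I would use the following ribbon-isotopy identity: a full positive twist on a pair of parallel framed strands is framed-isotopic to one positive framing kink on the fused $(j,j)$-coloured strand, combined with one negative framing kink on each of the two individual $n$-coloured strands. Writing $\mu_{\lambda}$ for the ribbon-element eigenvalue attached to a single positive kink on a strand of colour $\lambda$, this yields
\[
\lambda_{j} = \mu_{(j,j)} \, \mu_{(n,0)}^{-1} \, \mu_{(0,n)}^{-1}.
\]
The individual kink scalars $\mu_{(n,0)}$ and $\mu_{(j,j)}$ are extracted from the crossing relations of Definition~2.1 applied to a single kink on a clasped strand: an induction on $n$, using the recursive clasp definition, establishes $\mu_{(n,0)} = q^{-\frac{1}{3}(n^{2}+3n)}$; then the Recoupling Theorem together with Lemma~\ref{bigon} and Lemma~\ref{value} gives $\mu_{(j,j)} = q^{-(j^{2}+2j)}$. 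Substituting produces exactly $\lambda_{j} = q^{\frac{2}{3}(n^{2}+3n) - j^{2} - 2j}$, and the negative-twist formula then follows by inversion.

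The main obstacle is careful bookkeeping of the $q^{\frac{1}{6}}$-powers. Each use of the crossing relation introduces fractional exponents, and the inductive step for $\mu_{(j,j)}$ must be cross-checked against the explicit values of $\theta(n,n,(i,i))$, $\operatorname{Tet}$, and $\Delta$ from Lemma~\ref{value}. An alternative, more computational route that avoids the explicit ribbon formalism is to close both diagrams at the top and evaluate them as closed $A_{2}$ webs: the untwisted closure equals $\theta(n,n,(j,j))$, and the twisted closure can be reduced by Reidemeister moves combined with the crossing expansions of Definition~2.1 until the same closed web reappears; the ratio of the two values is exactly $\lambda_{j}$. This second route uses only the tools developed in Section~2 and has the virtue of being entirely internal to the $A_{2}$ skein theory.
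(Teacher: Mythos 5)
The paper does not actually prove Lemma~\ref{twist}: it is imported verbatim from \cite[Lemma~3.7]{Yuasa17b}, so there is no internal proof to measure your argument against, and it must be judged on its own terms. Your overall architecture is sound and standard: the one-dimensionality of $W((j^{+},j^{-}),(n^{+},0),(n^{-},0))$ (which the paper does establish, in the proof of Theorem~\ref{recoupling}, following Ohtsuki--Yamada) forces the twisted diagram to be a scalar multiple $\lambda_j$ of the untwisted one; the framed-isotopy cabling identity $\lambda_j=\mu_{(j,j)}\,\mu_{(n,0)}^{-1}\,\mu_{(0,n)}^{-1}$ is correct; and with your stated kink eigenvalues it reproduces $q^{\frac{2}{3}(n^2+3n)-j^2-2j}$, consistently with the $j=0$ and $j=n$ specializations.

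The genuine gap is the determination of $\mu_{(j,j)}$. The tools you invoke for it --- Theorem~\ref{recoupling}, Lemma~\ref{bigon} and Lemma~\ref{value} --- evaluate crossingless closed webs ($\theta$, $\operatorname{Tet}$, $\Delta$) and carry no braiding information, so they cannot by themselves yield a kink eigenvalue. Worse, the natural skein-theoretic route to $\mu_{(j,j)}$ is to apply your cabling identity once more to the pair $(j^{+},j^{-})$ sitting inside the $(j,j)$-clasp, namely $\mu_{(j,j)}=\mu_{(j,0)}\,\mu_{(0,j)}\cdot\big(\text{full twist on that pair restricted to its top component}\big)$; that last factor is exactly the $n=j$, top-component instance of Lemma~\ref{twist} itself, so the argument as sketched is circular at its crucial step. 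To close the gap you must either compute the kink on the $(j,j)$-clasped doubled strand directly --- expanding every crossing by the defining relations of the $A_2$ bracket and using Lemma~\ref{doubleA2claspprop} to kill every resolution with a turnback adjacent to the clasp, which is a genuine computation with the fractional powers of $q$ and not a corollary of the closed-web formulas --- or import the ribbon element of $U_q(\mathfrak{sl}_3)$ acting on $V_\lambda$ by $q^{\pm\frac{1}{2}\langle\lambda,\lambda+2\rho\rangle}$, which requires separately identifying the diagrammatic kink with the algebraic twist and steps outside the skein-theoretic toolkit of Section~2. Your fallback of closing both diagrams and comparing with $\theta(n,n,(j,j))$ does not evade this: ``reduce the twisted closure by Reidemeister moves and crossing expansions until the same web reappears'' \emph{is} the computation whose outcome is in question. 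The same criticism applies, less severely, to $\mu_{(n,0)}$: the inductive cabling argument you gesture at does work, but it is the substance of the proof rather than a one-line consequence of the clasp recursion.
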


Consequently, 
we obtain $\rho_{n}(\beta_T(j_0,j_1,\dots,j_{k-2}))=q^{-\frac{2}{3}(n^2+3n)+j_0^2+2j_0}\beta_T(j_0,j_1,\dots,j_{k-2})$

The following proposition describes a relationship between $\rho_n^{T}$ and $\rho_n^{T'}$. 
\begin{PROP}
For any colored triangle-free triangulations $T$ and $T'$,
There exists $A\in GL_{(n+1)^{k-2}}(\mathbb{C}(q^{\frac{1}{6}}))$ such that 
$\rho_n^{T}=A^{-1}\rho_n^{T'}A$.
\end{PROP}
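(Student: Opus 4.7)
The plan is to observe that $\rho_n^T$ and $\rho_n^{T'}$ are not, morally, two distinct representations of $P_{2k}$: they are the matrix forms of a single abstract representation $\rho_n \colon P_{2k} \to GL(W_{2k}(n^{\pm}))$ written with respect to two different ordered bases of the same underlying vector space. By the preceding theorem, both $\mathcal{B}_T$ and $\mathcal{B}_{T'}$ are bases of $W_{2k}(n^{\pm})$, so there is a unique invertible linear map $A$ on $W_{2k}(n^{\pm})$ sending each vector of $\mathcal{B}_{T'}$ to the corresponding column of its expansion in the ordered basis $\mathcal{B}_T$. The identity $\rho_n^T(b) = A^{-1}\rho_n^{T'}(b)A$ for every $b \in P_{2k}$ is then just the elementary change-of-basis formula applied to $\rho_n(b)$.

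To make this concrete I would describe $A$ in one of two equivalent ways. The direct approach is to expand each basis web $\beta_{T'}(s_0,\dots,s_{k-2})$ first as a linear combination of ordinary $A_2$ basis webs using the definitions of the $A_2$ clasps, and then, using the expansion procedure employed in the proof of the previous theorem, re-express the result in the basis $\mathcal{B}_T$; the coefficients obtained form the columns of the matrix $A$. A more conceptual approach is to join $T$ to $T'$ by a sequence of chord-flips of the $2k$-gon and to translate each flip into a local application of the Recoupling Theorem (Theorem~\ref{recoupling}) inside the affected rhombus; then $A$ factors as a product of elementary change-of-basis matrices whose entries are explicit quantum $6j$-symbols together with normalization factors from Lemmas~\ref{bigon} and~\ref{bibigon}.

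The main obstacle here is bookkeeping rather than mathematics. One must be careful that the intermediate triangulations appearing in a flip sequence need not themselves be triangle-free (which is fortunate, since flip-connectivity within the triangle-free class is not obvious), and one must verify that the entries of $A$ really lie in $\mathbb{Q}(q^{1/6})$. Both points are immediate: the recoupling coefficients, clasp expansions, and the coefficients appearing in Definition~\ref{doubleA2clasp} are all rational functions of $q^{1/6}$, and once the existence of $A$ as an invertible change of basis on $W_{2k}(n^{\pm})$ is established, the conjugation identity is forced by linear algebra independently of the particular factorization of $A$ that one writes down.
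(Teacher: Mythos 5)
Your first paragraph is already a complete and correct proof of the statement as written: $\rho_n^{T}$ and $\rho_n^{T'}$ are by definition the matrix forms of the single abstract representation $\rho_n\colon P_{2k}\to GL(W_{2k}(n^{\pm}))$ with respect to the two ordered bases $\mathcal{B}_T$ and $\mathcal{B}_{T'}$, both of which are bases of the same space by the preceding theorem, so the change-of-basis matrix $A$ conjugates one into the other by elementary linear algebra, and its entries lie in $\mathbb{Q}(q^{\frac{1}{6}})$ since all clasp expansions and recoupling coefficients do. The paper instead takes exactly your second, ``conceptual'' route: it invokes Adin--Firer--Roichman's result that the affine Weyl group $\tilde{C}_k$ acts transitively on \emph{colored triangle-free} triangulations by colored flips, so $T$ is joined to $T'$ by a flip sequence all of whose intermediate triangulations remain triangle-free; each flip is then a local recoupling move, and $A$ factors into elementary matrices of quantum $6j$-symbols. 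This citation resolves the worry you raise about intermediate triangulations leaving the triangle-free class --- in the paper's flip sequence they never do, which is precisely what makes the factorization well defined and reusable later (it is the matrix $S$ appearing in Lemma~\ref{shift}). In short, your abstract argument is the more economical proof of bare existence and needs no connectivity result at all, while the paper's approach buys an explicit, computable $A$; both are valid, and your fallback observation that the conjugation identity is independent of any particular factorization of $A$ is exactly the right safeguard.
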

\begin{proof}
Each chord is contained a unique quadrangle as its diagonal. 
A {\em flip} is replacing the chord with the other diagonal. 
A flip of a colored triangle-free triangulation is a flip between triangle-free triangulations such that the flipped chord has the same color of the previous one.
Adin, Firer and Roichman~\cite[Proposition~3.2]{AdinFirerRoichman10} defined a transitive action of the affine Weyl group $\tilde{C}_{k}$ on the colored triangle-free triangulations of $(D,Q_k)$. 
A generator $s_i$ of $\tilde{C}_{k}$ acts by a flip of the chord colored with $i$. 
Therefore, 
the colored triangle-free triangulation $T$ transform into $T'$ by a sequence of flips of colored triangle-free triangulations. 
A flip of colored triangle-free triangulations induces the change of triangle-free basis of $W_{2k}(n^{\pm})$.  
\end{proof}

\subsection{Computation of the Representation $\rho_n^{T}$}
Let us compute the value of $\rho_{n}^{T}$ on the generators $A_{i,j}$ of $P_n$. 
At the beginning, 
we prepare a lemma and a colored triangle-free triangulation $T_0$ fitting for the calculation of $A_{i,j}$.
\begin{LEM}\label{twitwist} For any $j=0,1,\dots,n$,
\begin{itemize}
\item
$
\bigg\langle\,\tikz[baseline=-.6ex]{
\draw[-<-=.5, white, double=black, double distance=0.4pt, ultra thick] 
(-1.5,0) -- (-1,0);
\draw[triple={[line width=1.4pt, white] in [line width=2.2pt, black] in [line width=5.4pt, white]}]
(-.5,.4) to[out=east, in=west] (.0,-.4);
\draw[triple={[line width=1.4pt, white] in [line width=2.2pt, black] in [line width=5.4pt, white]}]
(-1,0) to[out=north east, in=west] (-.5,.4);
\draw (-1,0) to[out=south east, in=west] (-.5,-.4);
\draw[-<-=1, white, double=black, double distance=0.4pt, ultra thick] 
(-.5,-.4) to[out=east, in=west] (.0,.4);
\draw[white, double=black, double distance=0.4pt, ultra thick] 
(0,.4) to[out=east, in=west] (.5,-.4);
\draw[triple={[line width=1.4pt, white] in [line width=2.2pt, black] in [line width=5.4pt, white]}] 
(0,-.4) to[out=east, in=west] (.5,.4);
\draw[fill=white] (-1,0) circle (.1);
\node at (-.5,-.5) [left]{$\scriptstyle{n}$};
\node at (-.5,.5) [left]{$\scriptstyle{j}$};
\node at (-1.5,0) [above]{$\scriptstyle{n}$};
}\,\bigg\rangle_{\!3}
=
q^{j^2+2j}
\bigg\langle\,\tikz[baseline=-.6ex]{
\draw[-<-=.5] (-1.5,0) -- (-1,0);
\draw[triple={[line width=1.4pt, white] in [line width=2.2pt, black] in [line width=5.4pt, white]}] 
(-1,0) to[out=north east, in=west] (-.5,.4);
\draw[-<-=.5] (-1,0) to[out=south east, in=west] (-.5,-.4);
\draw[fill=white] (-1,0) circle (.1);
\node at (-.5,-.5) [left]{$\scriptstyle{n}$};
\node at (-.5,.5) [left]{$\scriptstyle{j}$};
\node at (-1.5,0) [above]{$\scriptstyle{n}$};
}\,\bigg\rangle_{\!3}\, $,
\item
$\bigg\langle\,\tikz[baseline=-.6ex]{
\draw[-<-=.5, white, double=black, double distance=0.4pt, ultra thick] 
(-1.5,0) -- (-1,0);
\draw[triple={[line width=1.4pt, white] in [line width=2.2pt, black] in [line width=5.4pt, white]}]
(-1,0) to[out=north east, in=west] (-.5,.4);
\draw (-1,0) to[out=south east, in=west] (-.5,-.4);
\draw[-<-=1, white, double=black, double distance=0.4pt, ultra thick] 
(-.5,-.4) to[out=east, in=west] (.0,.4);
\draw[triple={[line width=1.4pt, white] in [line width=2.2pt, black] in [line width=5.4pt, white]}]
(-.5,.4) to[out=east, in=west] (.0,-.4);
\draw[triple={[line width=1.4pt, white] in [line width=2.2pt, black] in [line width=5.4pt, white]}] 
(0,-.4) to[out=east, in=west] (.5,.4);
\draw[white, double=black, double distance=0.4pt, ultra thick] 
(0,.4) to[out=east, in=west] (.5,-.4);
\draw[fill=white] (-1,0) circle (.1);
\node at (-.5,-.5) [left]{$\scriptstyle{n}$};
\node at (-.5,.5) [left]{$\scriptstyle{j}$};
\node at (-1.5,0) [above]{$\scriptstyle{n}$};
}\,\bigg\rangle_{\!3}
=
q^{-j^2-2j}
\bigg\langle\,\tikz[baseline=-.6ex]{
\draw[-<-=.5] (-1.5,0) -- (-1,0);
\draw[triple={[line width=1.4pt, white] in [line width=2.2pt, black] in [line width=5.4pt, white]}] 
(-1,0) to[out=north east, in=west] (-.5,.4);
\draw[-<-=.5] (-1,0) to[out=south east, in=west] (-.5,-.4);
\draw[fill=white] (-1,0) circle (.1);
\node at (-.5,-.5) [left]{$\scriptstyle{n}$};
\node at (-.5,.5) [left]{$\scriptstyle{j}$};
\node at (-1.5,0) [above]{$\scriptstyle{n}$};
}\,\bigg\rangle_{\!3}\,$ .
\end{itemize}
\end{LEM}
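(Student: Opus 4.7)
The plan is to prove the first equation of the lemma; the second equation follows by applying the same argument to the mirror image of the diagram (with all crossings reversed). The first step is to unwind the geometry on the left-hand side: tracing the orientations and the drawing order in the picture, one sees that the incoming $n$-strand on the far left, the white trivalent vertex, and the lower $n$-strand exiting on the right together form one continuous $n$-colored strand, while the $j$-colored clasp is a separate arc emerging from the vertex that performs two successive crossings with this $n$-strand. Thus the diagram depicts a single $n$-strand passing through the vertex, with a $j$-clasp making one full mutual twist around it.

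Next I will apply the Reidemeister moves for tangled trivalent graph diagrams—specifically (R2) and (R4) from Figure~\ref{Reidemeister}—to pull the white vertex through the two crossings and out to the right side, straightening the $n$-strand in the process. After this isotopy, what remains is a self-kink on the $j$-clasp alone, attached to a clean trivalent vertex; removing this kink via (R1') produces the framing anomaly $\theta_{(j,j)}$ of a $(j,j)$-colored clasp as a scalar prefactor, while the rest of the diagram becomes exactly the right-hand side of the lemma. The value $\theta_{(j,j)} = q^{j^2+2j}$ I will identify by reading off Lemma~\ref{twist}: its scalar $q^{\frac{2}{3}(n^2+3n)-j^2-2j}$ factors as $\theta_n^2 \theta_{(j,j)}^{-1}$, where $\theta_n = q^{\frac{1}{3}(n^2+3n)}$ is the framing anomaly of a single $n$-colored strand (itself obtained from one crossing expansion of the $A_2$ bracket); solving for $\theta_{(j,j)}$ gives the desired scalar.

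The main obstacle is the bookkeeping at the two crossings: one must confirm carefully which strand passes over which and that the net writhe of the two crossings corresponds to a positive (rather than negative) full twist, since this distinction determines whether $\theta_{(j,j)}$ or its inverse appears in the final formula. A fully self-contained alternative that avoids the isotopy step is to expand each of the two crossings using the defining $A_2$ skein relations from the start of Section~2, collapse the resulting sum of noncrossing webs using the clasp-killing identities and Lemma~\ref{bibigon}, and read off the surviving coefficient $q^{j^2+2j}$ directly; this version stays entirely inside the $A_2$ skein calculus but requires more explicit computation and is essentially the route taken in the proof of Lemma~\ref{twist} in \cite{Yuasa17b}.
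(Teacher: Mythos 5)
Your overall strategy --- trade the mutual full twist between the $j$-clasp and the $n$-leg for framing anomalies of the individual legs and read off the answer as the kink coefficient $q^{\pm(j^2+2j)}$ of the $(j,j)$-clasp --- is viable, but two steps are not sound as written. First, the isotopy cannot end with ``a self-kink on the $j$-clasp alone.'' Pulling the double crossing through the white vertex (via (R4)) converts the full twist on the pair (lower $n$-leg, $j$-leg) into a curl on \emph{each} of the three legs: one sign on the remaining $n$-leg, the opposite sign on the twisted $n$-leg and on the $j$-leg. The two $n$-curls sit on different edges, separated by the vertex, so (R1') cannot cancel them --- (R1') only annihilates a pair of opposite kinks adjacent on the same strand, and it never removes a single kink at the cost of a scalar (that removal is an evaluation, not a regular isotopy). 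You must evaluate the two $n$-curls separately and note that their scalars are mutually inverse; only then does the $j$-kink's anomaly survive as the coefficient. Second, your identification of that anomaly as $q^{j^2+2j}$ is obtained by ``factoring'' the scalar of Lemma~\ref{twist} as (square of the $n$-anomaly)$\times$(inverse $(j,j)$-anomaly); but that factorization is precisely the vertex-twist identity you are in the middle of proving, applied to the other pair of legs, so as written the argument is circular unless you first establish that identity from (R1')--(R4).

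The paper avoids both issues with a different device: it embeds the configuration of Lemma~\ref{twist} into an auxiliary diagram in which one $n$-leg is additionally looped once around the vertex (hence crosses the $j$-cable twice), and evaluates that one diagram in two ways. Applying Lemma~\ref{twist} to the $(n,n)$-twist leaves exactly the left-hand side of Lemma~\ref{twitwist} with coefficient $q^{\frac{2}{3}(n^2+3n)-j^2-2j}$, while a regular isotopy collapses the whole loop to a kinked $n$-leg worth $q^{\frac{2}{3}(n^2+3n)}$ times the bare vertex; dividing gives $q^{j^2+2j}$ without ever naming the $(j,j)$-framing anomaly or the general vertex-twist move. Your fallback (expanding both crossings by the defining relations and collapsing with the clasp identities and Lemma~\ref{bibigon}) would also work but is far heavier. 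If you want to keep the isotopy route, prove the three-curl form of the vertex-twist move first and obtain the $(j,j)$-kink coefficient by closing the kink against a vertex --- which in effect reconstructs the paper's auxiliary diagram.
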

\begin{proof}
From Lemma~\ref{twist},
\[
\bigg\langle\,\tikz[baseline=-.6ex, xscale=-1]{
\draw[white, double=black, double distance=0.4pt, ultra thick] 
(-1.5,.4) to[out=west, in=north east] (-2,-.4);
\draw[triple={[line width=1.4pt, white] in [line width=2.2pt, black] in [line width=5.4pt, white]}]
(-2,.4) to[out=south east, in=west] (-1.5,-.4);
\draw[triple={[line width=1.4pt, white] in [line width=2.2pt, black] in [line width=5.4pt, white]}]
(-1.5,-.4) to[out=east, in=north west] (-1,0);
\draw (-1,0) to[out=north east, in=west] (-.5,.4);
\draw (-1,0) to[out=south east, in=west] (-.5,-.4);
\draw[-<-=1, white, double=black, double distance=0.4pt, ultra thick] 
(-.5,-.4) to[out=east, in=west] (.0,.4);
\draw[->-=1, white, double=black, double distance=0.4pt, ultra thick] 
(-.5,.4) to[out=east, in=west] (.0,-.4);
\draw[white, double=black, double distance=0.4pt, ultra thick] 
(0,-.4) to[out=east, in=west] (.5,.4);
\draw[white, double=black, double distance=0.4pt, ultra thick] 
(0,.4) to[out=east, in=west] (.5,-.4);
\draw[white, double=black, double distance=0.4pt, ultra thick] 
(.5,-.4) to[out=east, in=east] (.5,-.6) -- (-1,-.6);
\draw[white, double=black, double distance=0.4pt, ultra thick] 
(-1,-.6) to[out=west, in=east] (-1.5,.4);
\draw[fill=white] (-1,0) circle (.1);
\node at (.5,-.4) [left]{$\scriptstyle{n}$};
\node at (.5,.4) [left]{$\scriptstyle{n}$};
\node at (-2,.4) [right]{$\scriptstyle{j}$};
}\,\bigg\rangle_{\!3}
=q^{\frac{2}{3}(n^2+3n)-j^2-2j}
\bigg\langle\,\tikz[baseline=-.6ex]{
\draw[-<-=.5, white, double=black, double distance=0.4pt, ultra thick] 
(-1.5,0) -- (-1,0);
\draw[triple={[line width=1.4pt, white] in [line width=2.2pt, black] in [line width=5.4pt, white]}]
(-.5,.4) to[out=east, in=west] (.0,-.4);
\draw[triple={[line width=1.4pt, white] in [line width=2.2pt, black] in [line width=5.4pt, white]}]
(-1,0) to[out=north east, in=west] (-.5,.4);
\draw (-1,0) to[out=south east, in=west] (-.5,-.4);
\draw[-<-=1, white, double=black, double distance=0.4pt, ultra thick] 
(-.5,-.4) to[out=east, in=west] (.0,.4);
\draw[white, double=black, double distance=0.4pt, ultra thick] 
(0,.4) to[out=east, in=west] (.5,-.4);
\draw[triple={[line width=1.4pt, white] in [line width=2.2pt, black] in [line width=5.4pt, white]}] 
(0,-.4) to[out=east, in=west] (.5,.4);
\draw[fill=white] (-1,0) circle (.1);
\node at (-.5,-.5) [left]{$\scriptstyle{n}$};
\node at (-.5,.5) [left]{$\scriptstyle{j}$};
\node at (-1.5,0) [above]{$\scriptstyle{n}$};
}\,\bigg\rangle_{\!3}.\]
By a regular isotopy,
The left-hand side of the above equation is deformed into 
\[
\bigg\langle\,\tikz[baseline=-.6ex]{
\draw[-<-=.5] (-2,0) -- (-1,0);
\draw[triple={[line width=1.4pt, white] in [line width=2.2pt, black] in [line width=5.4pt, white]}] 
(-1,0) to[out=north east, in=west] (-.5,.5);
\draw[white, double=black, double distance=0.4pt, ultra thick] 
(-1.5,-.2) to[out=east, in=west] (-.5,-.5);
\draw[-<-=.8, white, double=black, double distance=0.4pt, ultra thick] 
(-1,0) to[out=south, in=east] (-1.5,-.6);
\draw[white, double=black, double distance=0.4pt, ultra thick] 
(-1.5,-.6) to[out=west, in=north] (-2,-.4);
\draw[white, double=black, double distance=0.4pt, ultra thick] 
(-2,-.4) to[out=south, in=west] (-1.5,-.2);
\draw[fill=white] (-1,0) circle (.1);
\node at (-.5,-.5) [right]{$\scriptstyle{n}$};
\node at (-.5,.5) [right]{$\scriptstyle{j}$};
\node at (-1.5,0) [above]{$\scriptstyle{n}$};
}\,\bigg\rangle_{\!3}
=q^{\frac{2}{3}(n^2+3n)}
\bigg\langle\,\tikz[baseline=-.6ex]{
\draw[-<-=.5] (-1.5,0) -- (-1,0);
\draw[triple={[line width=1.4pt, white] in [line width=2.2pt, black] in [line width=5.4pt, white]}] 
(-1,0) to[out=north east, in=west] (-.5,.4);
\draw[-<-=.5] (-1,0) to[out=south east, in=west] (-.5,-.4);
\draw[fill=white] (-1,0) circle (.1);
\node at (-.5,-.5) [left]{$\scriptstyle{n}$};
\node at (-.5,.5) [left]{$\scriptstyle{j}$};
\node at (-1.5,0) [above]{$\scriptstyle{n}$};
}\,\bigg\rangle_{\!3}\, .
\]
We can similarly prove the second equation.
\end{proof}
\begin{figure}
\begin{tikzpicture}[baseline=-.6ex, scale=0.5]
\draw[fill=lightgray] (-2,0) -- (2,0) to[out=south, in=south] (-2,0) -- cycle;
\draw[rounded corners] (-9.5,-5) rectangle (9.5,0);
\draw[thick, magenta] (-2,0) to[out=south, in=south] (2,0);
\draw[thick, magenta] (2,0) to[out=south, in=south] (-4,0);
\draw[thick, magenta] (-4,0) to[out=south, in=south] (4,0);
\draw[thick, magenta] (6,0) to[out=south, in=south] (-6,0);
\draw[thick, magenta] (-6,0) to[out=south, in=south] (8,0);
\draw[thick, magenta] (8,0) to[out=south, in=south] (-8,0);
\node[thick, magenta] at (-5,0) [below]{${\cdots}$};
\node[thick, magenta] at (5,0) [below]{${\cdots}$};
\draw[fill=black] (0,-5)  circle (.1);
\draw[fill=black] (-8,0)  circle (.1);
\draw[fill=black] (-6,0)  circle (.1);
\draw[fill=black] (-4,0)  circle (.1);
\draw[fill=black] (-2,0)  circle (.1);
\draw[fill=black] (0,0)  circle (.1);
\draw[fill=black] (2,0)  circle (.1);
\draw[fill=black] (4,0)  circle (.1);
\draw[fill=black] (6,0)  circle (.1);
\draw[fill=black] (8,0)  circle (.1);
\node at (0,-5) [below]{$\scriptstyle{q_0}$};
\node at (-8,0) [above]{$\scriptstyle{q_{1}}$};
\node at (-6,0) [above]{$\scriptstyle{q_{2}}$};
\node at (-5,0) [above]{$\scriptstyle{\cdots}$};
\node at (-4,0) [above]{$\scriptstyle{q_{k-2}}$};
\node at (-2,0) [above]{$\scriptstyle{q_{k-1}}$};
\node at (0,0) [above]{$\scriptstyle{q_k}$};
\node at (2,0) [above]{$\scriptstyle{q_{k+1}}$};
\node at (4,0) [above]{$\scriptstyle{q_{k+2}}$};
\node at (5,0) [above]{$\scriptstyle{\cdots}$};
\node at (6,0) [above]{$\scriptstyle{q_{2k-2}}$};
\node at (8,0) [above]{$\scriptstyle{q_{2k-1}}$};
\node at (0,0) [above]{$\scriptstyle{q_k}$};
\node at (-9.5,-3) [left]{$T_0=\ $};
\end{tikzpicture}\\
\begin{tikzpicture}[baseline=-.6ex, scale=0.5]
\draw[rounded corners] (-9.5,-5) rectangle (9.5,0);
\draw [triple={[line width=1.4pt, white] in [line width=2.2pt, black] in [line width=5.4pt, white]}] (0,-.5) -- (0,-1);
\draw [triple={[line width=1.4pt, white] in [line width=2.2pt, black] in [line width=5.4pt, white]}] (2,-1) -- (2,-1.5);
\draw [triple={[line width=1.4pt, white] in [line width=2.2pt, black] in [line width=5.4pt, white]}] (5,-2) -- (5,-2.5);
\draw [triple={[line width=1.4pt, white] in [line width=2.2pt, black] in [line width=5.4pt, white]}] (6,-2.5) -- (6,-3);
\draw[rounded corners] (-1,0) -- (-1,-.5) -- (0,-.5);
\draw[rounded corners] (-3,0) -- (-3,-1) -- (0,-1);
\draw[rounded corners] (-4,0) -- (-4,-1.5) -- (2,-1.5);
\draw[->-=.1, rounded corners] (-7,0) -- (-7,-2.5) -- (5,-2.5);
\draw[-<-=.1, rounded corners] (-9,0) -- (-9,-3) -- (6,-3);
\draw[rounded corners] (1,0) -- (1,-.5) -- (0,-.5);
\draw[rounded corners] (3,0) -- (3,-1) -- (2,-1);
\draw[-<-=.2, rounded corners] (6,0) -- (6,-2) -- (5,-2);
\draw[->-=.2, rounded corners] (7,0) -- (7,-2.5) -- (6,-2.5);
\draw[-<-=.1, rounded corners] (9,0) -- (9,-3) -- (6,-3);
\draw[rounded corners] (0,-1) -- (2,-1);
\draw[rounded corners] (2,-1.5) -- (3,-1.5);
\draw[rounded corners] (5,-2) -- (4,-2);
\draw[rounded corners] (5,-2.5) -- (6,-2.5);
\draw[fill=white] (0,-.5)  circle (.1);
\draw[fill=white] (0,-1)  circle (.1);
\draw[fill=white] (2,-1)  circle (.1);
\draw[fill=white] (2,-1.5)  circle (.1);
\draw[fill=white] (5,-2)  circle (.1);
\draw[fill=white] (5,-2.5)  circle (.1);
\draw[fill=white] (6,-2.5)  circle (.1);
\draw[fill=white] (6,-3)  circle (.1);
\draw[fill=cyan] (-9,0)  circle (.1);
\draw[fill=cyan] (-7,0)  circle (.1);
\draw[fill=cyan] (-3,0)  circle (.1);
\draw[fill=cyan] (-4,0)  circle (.1);
\draw[fill=cyan] (-1,0)  circle (.1);
\draw[fill=cyan] (1,0)  circle (.1);
\draw[fill=cyan] (3,0)  circle (.1);
\draw[fill=cyan] (6,0)  circle (.1);
\draw[fill=cyan] (7,0)  circle (.1);
\draw[fill=cyan] (9,0)  circle (.1);
\node at (3.5,-1.5) [below]{$\scriptstyle{\cdots}$};
\node at (-9,0) [above]{$\scriptstyle{P^{(1)}}$};
\node at (-7,0) [above]{$\scriptstyle{P^{(2)}}$};
\node at (-5,0) [above]{$\scriptstyle{\cdots}$};
\node at (-3,0) [above]{$\scriptstyle{P^{(k-1)}}$};
\node at (-1,0) [above]{$\scriptstyle{P^{(k)}}$};
\node at (1,0) [above]{$\scriptstyle{P^{(k+1)}}$};
\node at (3,0) [above]{$\scriptstyle{P^{(k+2)}}$};
\node at (5,0) [above]{$\scriptstyle{\cdots}$};
\node at (7,0) [above]{$\scriptstyle{P^{(2k-1)}}$};
\node at (9,0) [above]{$\scriptstyle{P^{(2k)}}$};
\node at (-9.5,-3) [left]{$\beta_{T_0}=\ $};
\end{tikzpicture}
\caption{Colored triangle-free triangulation $T_{0}$ and $\beta_{T_0}$}
\label{Fig:T0}
\end{figure}

Let us define a special colored triangulation $T_0$ of $(D,Q_{2k})$ as in Fig.~\ref{Fig:T0}. 
The shaded triangle contains the short chord colored by $0$. 
In this case, 
we can compute $\rho_n^{T_0}(A_{k-t,k+t})$ and $\rho_n^{T_0}(A_{k-t,k+t+1})$ for $1\leq t\leq k-1$. 
We remark that $\rho_n^{T_0}(A_{k,k+1})$ is already computed in the previous section. 

First, 
we calculate $\rho_n^{T_0}(A_{k-t,k+t})$, that is, 
we consider the clasped $A_2$ web obtained by gluing $A_{k-t,k+t}$ on the top of $\beta_{T_0}$. 
We can slide a subarc of the $(k+t)$-th strand knotted with the $(k-t)$-th strand downward. 
Then, 
the difference of the $A_2$ web and $\beta_n^{T_0}$ only appears around the clasped $A_2$ web of type $(j_{t},j_{t})$ and one of type $(j_{t-1},j_{t-1})$. 
This part is calculated by using Lemma~\ref{twist} and Lemma~\ref{twitwist} as follows:
\begin{align*}
&\Bigg\langle\,\tikz[baseline=-.6ex, scale=.5, yshift=-1cm]{
\draw[white, double=black, double distance=0.4pt, ultra thick, rounded corners] 
(-.5,.5) -- (-.5,-.5) -- (-1,-.5);
\draw[white, double=black, double distance=0.4pt, ultra thick, rounded corners] 
(-3,0) -- (1.5,0);
\draw[white, double=black, double distance=0.4pt, ultra thick, rounded corners] 
(-3,1) -- (1,1) -- (1,1.5) -- (0,1.5);
\draw[white, double=black, double distance=0.4pt, ultra thick, rounded corners] 
(-1,-.5) -- (-1.5,-.5) -- (-1.5,2) -- (1,2) -- (1,2.5);
\draw[white, double=black, double distance=0.4pt, ultra thick, rounded corners] 
(0,1.5) -- (-.5,1.5) -- (-.5,.5);
\draw[triple={[line width=1.4pt, white] in [line width=2.2pt, black] in [line width=5.4pt, white]}]
(0,0) -- (0,1);
\draw[triple={[line width=1.4pt, white] in [line width=2.2pt, black] in [line width=5.4pt, white]}]
(-2,1) -- (-2,2);
\draw[fill=white] (0,0) circle (.1);
\draw[fill=white] (0,1) circle (.1);
\draw[fill=white] (-2,1) circle (.1);
\node at (-2,1.5) [left]{$\scriptstyle{j_{t}}$};
\node at (0,.5) [right]{$\scriptstyle{j_{t-1}}$};
}\,\Bigg\rangle_{\!3}
=
q^{\frac{2}{3}(n^2+3n)-j_{t-1}^2-2j_{t-1}}
\Bigg\langle\,\tikz[baseline=-.6ex, scale=.5, yshift=-1cm]{
\draw[white, double=black, double distance=0.4pt, ultra thick, rounded corners] 
(0,1.5) -- (-.5,1.5) -- (-.5,-.5) -- (-1,-.5);
\draw[white, double=black, double distance=0.4pt, ultra thick, rounded corners] 
(-3,0) -- (1.5,0);
\draw[white, double=black, double distance=0.4pt, ultra thick, rounded corners] 
(-3,1) -- (1,1) -- (1,1.5) -- (0,1.5);
\draw[white, double=black, double distance=0.4pt, ultra thick, rounded corners] 
(-1,-.5) -- (-1.5,-.5) -- (-1.5,2) -- (1,2) -- (1,2.5);
\draw[triple={[line width=1.4pt, white] in [line width=2.2pt, black] in [line width=5.4pt, white]}]
(0,0) -- (0,1);
\draw[triple={[line width=1.4pt, white] in [line width=2.2pt, black] in [line width=5.4pt, white]}]
(-2,1) -- (-2,2);
\draw[fill=white] (0,0) circle (.1);
\draw[fill=white] (0,1) circle (.1);
\draw[fill=white] (-2,1) circle (.1);
\node at (-2,1.5) [left]{$\scriptstyle{j_{t}}$};
\node at (0,.5) [right]{$\scriptstyle{j_{t-1}}$};
}\,\Bigg\rangle_{\!3}\\
&=\sum_{a=0}^{n}
q^{\frac{2}{3}(n^2+3n)-j_{t-1}^2-2j_{t-1}}
\begin{Bmatrix}
n^{-}&n^{+}&(a,a)\\
n^{-}&n^{+}&(j_{t-1},j_{t-1})
\end{Bmatrix}
\Bigg\langle\,\tikz[baseline=-.6ex, scale=.5, yshift=-1cm]{
\draw[white, double=black, double distance=0.4pt, ultra thick, rounded corners] 
(.5,1.5) -- (0,1.5) -- (0,-.5) -- (-.2,-.5);
\draw[white, double=black, double distance=0.4pt, ultra thick, rounded corners] 
(-3,0) -- (-1.5,0) -- (-1,.5);
\draw[white, double=black, double distance=0.4pt, ultra thick, rounded corners] 
(-3,1) -- (-1.5,1) -- (-1,.5);
\draw[white, double=black, double distance=0.4pt, ultra thick, rounded corners] 
(.5,.5) -- (1,0) -- (1.5,0);
\draw[white, double=black, double distance=0.4pt, ultra thick, rounded corners] 
(.5,.5) -- (1,1) -- (1,1.5) -- (.5,1.5);
\draw[triple={[line width=1.4pt, white] in [line width=2.2pt, black] in [line width=5.4pt, white]}]
(-1,.5) -- (.5,.5);
\draw[triple={[line width=1.4pt, white] in [line width=2.2pt, black] in [line width=5.4pt, white]}]
(-2,1) -- (-2,2);
\draw[white, double=black, double distance=0.4pt, ultra thick, rounded corners] 
(-.2,-.5) -- (-.5,-.5) -- (-.5,2) -- (1,2) -- (1,2.5);
\draw[fill=white] (-1,.5) circle (.1);
\draw[fill=white] (.5,.5) circle (.1);
\draw[fill=white] (-2,1) circle (.1);
\node at (-2,1.5) [left]{$\scriptstyle{j_{t}}$};
\node at (.5,.5) [right]{$\scriptstyle{a}$};
}\,\Bigg\rangle_{\!3}\\
&=\sum_{a=0}^{n}
q^{\frac{2}{3}(n^2+3n)-j_{t-1}^2-2j_{t-1}}
q^{-a^2-2a}
\begin{Bmatrix}
n^{-}&n^{+}&(a,a)\\
n^{-}&n^{+}&(j_{t-1},j_{t-1})
\end{Bmatrix}
\Bigg\langle\,\tikz[baseline=-.6ex, scale=.5, yshift=-1cm]{
\draw[white, double=black, double distance=0.4pt, ultra thick, rounded corners] 
(-3,0) -- (-1.5,0) -- (-1,.5);
\draw[white, double=black, double distance=0.4pt, ultra thick, rounded corners] 
(-3,1) -- (-1.5,1) -- (-1,.5);
\draw[white, double=black, double distance=0.4pt, ultra thick, rounded corners] 
(.5,.5) -- (1,0) -- (1.5,0);
\draw[white, double=black, double distance=0.4pt, ultra thick, rounded corners] 
(.5,.5) -- (1,1) -- (1,2);
\draw[triple={[line width=1.4pt, white] in [line width=2.2pt, black] in [line width=5.4pt, white]}]
(-1,.5) -- (.5,.5);
\draw[triple={[line width=1.4pt, white] in [line width=2.2pt, black] in [line width=5.4pt, white]}]
(-2,1) -- (-2,2);
\draw[fill=white] (-1,.5) circle (.1);
\draw[fill=white] (.5,.5) circle (.1);
\draw[fill=white] (-2,1) circle (.1);
\node at (-2,1.5) [left]{$\scriptstyle{j_{t}}$};
\node at (0,.5) [above left]{$\scriptstyle{a}$};
}\,\Bigg\rangle_{\!3}\\
&=\sum_{a=0}^{n}\sum_{b=0}^{n}
q^{\frac{2}{3}(n^2+3n)-j_{t-1}^2-2j_{t-1}}
q^{-a^2-2a}
\begin{Bmatrix}
n^{-}&n^{+}&(a,a)\\
n^{-}&n^{+}&(j_{t-1},j_{t-1})
\end{Bmatrix}
\begin{Bmatrix}
n^{-}&n^{+}&(b,b)\\
n^{-}&n^{+}&(a,a)
\end{Bmatrix}\\
&\quad\quad\times\Bigg\langle\,\tikz[baseline=-.6ex, scale=.5, yshift=-1cm]{
\draw[white, double=black, double distance=0.4pt, ultra thick, rounded corners] 
(-3,0) -- (1.5,0);
\draw[white, double=black, double distance=0.4pt, ultra thick, rounded corners] 
(-3,1) -- (1,1) -- (1,2);
\draw[triple={[line width=1.4pt, white] in [line width=2.2pt, black] in [line width=5.4pt, white]}]
(0,0) -- (0,1);
\draw[triple={[line width=1.4pt, white] in [line width=2.2pt, black] in [line width=5.4pt, white]}]
(-2,1) -- (-2,2);
\draw[fill=white] (0,0) circle (.1);
\draw[fill=white] (0,1) circle (.1);
\draw[fill=white] (-2,1) circle (.1);
\node at (-2,1.5) [left]{$\scriptstyle{j_{t}}$};
\node at (0,.5) [right]{$\scriptstyle{b}$};
}\,\Bigg\rangle_{\!3}\ .
\end{align*}
The above calculation is independent of the choice of orientations of $n$-colored $A_2$ webs. 

Next, 
we calculate $\rho_n^{T_0}(A_{k-t,k+t+1})$ in a similar way.
In this case, 
we only have to calculate the following.
\begin{align*}
&\Bigg\langle\,\tikz[baseline=-.6ex, scale=.5, yshift=-1cm]{
\draw[triple={[line width=1.4pt, white] in [line width=2.2pt, black] in [line width=5.4pt, white]}]
(-1,1) -- (-1,2.5);
\draw[white, double=black, double distance=0.4pt, ultra thick, rounded corners] 
(0,1.5) -- (-1.5,1.5) -- (-1.5,.7);
\draw[white, double=black, double distance=0.4pt, ultra thick, rounded corners] 
(-1.5,.7) -- (-1.5,.5) -- (-2,.5);
\draw[white, double=black, double distance=0.4pt, ultra thick, rounded corners] 
(-3,0) -- (1.5,0);
\draw[white, double=black, double distance=0.4pt, ultra thick, rounded corners] 
(-3,1) -- (1,1) -- (1,1.5) -- (0,1.5);
\draw[white, double=black, double distance=0.4pt, ultra thick, rounded corners] 
(-2,.5) -- (-2.5,.5) -- (-2.5,2) -- (1,2) -- (1,2.5);
\draw[triple={[line width=1.4pt, white] in [line width=2.2pt, black] in [line width=5.4pt, white]}]
(0,0) -- (0,1);
\draw[fill=white] (0,0) circle (.1);
\draw[fill=white] (0,1) circle (.1);
\draw[fill=white] (-1,1) circle (.1);
\node at (-1,2.5) [left]{$\scriptstyle{j_{t-1}}$};
\node at (0,.5) [right]{$\scriptstyle{j_{t}}$};
}\,\Bigg\rangle_{\!3}
=
\begin{Bmatrix}
n^{+}&j_{t-1}^{\pm}&n\\
n^{-}&j_{t}^{\mp}&n
\end{Bmatrix}
\Bigg\langle\,\tikz[baseline=-.6ex, scale=.5, yshift=-1cm]{
\draw[triple={[line width=1.4pt, white] in [line width=2.2pt, black] in [line width=5.4pt, white]}]
(0,1) -- (0,2.5);
\draw[white, double=black, double distance=0.4pt, ultra thick, rounded corners] 
(0,1.5) -- (-1.5,1.5) -- (-1.5,.7);
\draw[white, double=black, double distance=0.4pt, ultra thick, rounded corners] 
(-1.5,.7) -- (-1.5,.5) -- (-2,.5);
\draw[white, double=black, double distance=0.4pt, ultra thick, rounded corners] 
(-3,0) -- (1.5,0);
\draw[white, double=black, double distance=0.4pt, ultra thick, rounded corners] 
(-3,1) -- (1,1) -- (1,1.5) -- (0,1.5);
\draw[white, double=black, double distance=0.4pt, ultra thick, rounded corners] 
(-2,.5) -- (-2.5,.5) -- (-2.5,2) -- (1,2) -- (1,2.5);
\draw[triple={[line width=1.4pt, white] in [line width=2.2pt, black] in [line width=5.4pt, white]}]
(-1,0) -- (-1,1);
\draw[fill=white] (-1,0) circle (.1);
\draw[fill=white] (-1,1) circle (.1);
\draw[fill=white] (0,1) circle (.1);
\node at (0,2.5) [left]{$\scriptstyle{j_{t-1}}$};
\node at (-1,.5) [right]{$\scriptstyle{j_{t}}$};
}\,\Bigg\rangle_{\!3}\\
&=
q^{j_{t-1}^2+2j_{t-1}}
\begin{Bmatrix}
n^{+}&j_{t-1}^{\pm}&n\\
n^{-}&j_{t}^{\mp}&n
\end{Bmatrix}
\Bigg\langle\,\tikz[baseline=-.6ex, scale=.5, yshift=-1cm]{
\draw[white, double=black, double distance=0.4pt, ultra thick, rounded corners] 
(.5,1) -- (1,1) -- (1,1.5) -- (-1,1.5);
\draw[triple={[line width=1.4pt, white] in [line width=2.2pt, black] in [line width=5.4pt, white]}]
(0,1) -- (0,2.5);
\draw[white, double=black, double distance=0.4pt, ultra thick, rounded corners] 
(-1,1.5) -- (-1.5,1.5) -- (-1.5,.7);
\draw[white, double=black, double distance=0.4pt, ultra thick, rounded corners] 
(-1.5,.7) -- (-1.5,.5) -- (-2,.5);
\draw[white, double=black, double distance=0.4pt, ultra thick, rounded corners] 
(-3,0) -- (1.5,0);
\draw[white, double=black, double distance=0.4pt, ultra thick, rounded corners] 
(-3,1) -- (.5,1);
\draw[white, double=black, double distance=0.4pt, ultra thick, rounded corners] 
(-2,.5) -- (-2.5,.5) -- (-2.5,2) -- (1,2) -- (1,2.5);
\draw[triple={[line width=1.4pt, white] in [line width=2.2pt, black] in [line width=5.4pt, white]}]
(-1,0) -- (-1,1);
\draw[fill=white] (-1,0) circle (.1);
\draw[fill=white] (-1,1) circle (.1);
\draw[fill=white] (0,1) circle (.1);
\node at (0,2.5) [left]{$\scriptstyle{j_{t-1}}$};
\node at (-1,.5) [right]{$\scriptstyle{j_{t}}$};
}\,\Bigg\rangle_{\!3}\\
&=
q^{-\frac{2}{3}(n^2+3n)+j_{t}^2+2j_{t}}q^{j_{t-1}^2+2j_{t-1}}
\Bigg\langle\,\tikz[baseline=-.6ex, scale=.5, yshift=-1cm]{
\draw[white, double=black, double distance=0.4pt, ultra thick, rounded corners] 
(-3,0) -- (1.5,0);
\draw[white, double=black, double distance=0.4pt, ultra thick, rounded corners] 
(-3,1) -- (1,1) -- (1,2);
\draw[triple={[line width=1.4pt, white] in [line width=2.2pt, black] in [line width=5.4pt, white]}]
(0,0) -- (0,1);
\draw[triple={[line width=1.4pt, white] in [line width=2.2pt, black] in [line width=5.4pt, white]}]
(-2,1) -- (-2,2);
\draw[fill=white] (0,0) circle (.1);
\draw[fill=white] (0,1) circle (.1);
\draw[fill=white] (-2,1) circle (.1);
\node at (-2,1.5) [left]{$\scriptstyle{j_{t-1}}$};
\node at (0,.5) [right]{$\scriptstyle{j_t}$};
}\,\Bigg\rangle_{\!3}\ .
\end{align*}

\begin{THM}\label{AT0}
Let $T_0$ be the colored triangle-free triangulation in Fig.~\ref{Fig:T0}. 
Then, 
\begin{align*}
&\rho_{n}(A_{k-t,k+t})(\beta_{T_0}(j_0,j_1,\dots,j_{k-2}))\\
&=\sum_{j'_t=0}^{n}\sum_{a=0}^{n}
q^{\frac{2}{3}(n^2+3n)-j_{t-1}^2-2j_{t-1}}
q^{-a^2-2a}
\begin{Bmatrix}
n^{-}&n^{+}&(a,a)\\
n^{-}&n^{+}&(j_{t-1},j_{t-1})
\end{Bmatrix}
\begin{Bmatrix}
n^{-}&n^{+}&(j'_t,j'_t)\\
n^{-}&n^{+}&(a,a)
\end{Bmatrix}\\
&\quad\quad\times\beta_{T_0}(j_0,\dots,j_{t-1},j'_t,j_{t+1},\dots,j_{k-2})
\\
&\rho_{n}(A_{k-t,k+t-1})(\beta_{T_0}(j_0,j_1,\dots,j_{k-2}))=q^{-\frac{2}{3}(n^2+3n)+j_{t}^2+2j_{t}}q^{j_{t-1}^2+2j_{t-1}}
\beta_{T_0}(j_0,j_1,\dots,j_{k-2})
\end{align*}
\end{THM}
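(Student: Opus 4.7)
The plan is to compute the action of the braid generators $A_{k-t,k+t}$ and $A_{k-t,k+t-1}$ on the triangle-free basis web $\beta_{T_0}(j_0,\dots,j_{k-2})$ by localizing the braid action to the pair of internal clasps of colors $j_{t-1}$ and $j_t$ along the chord cascade of $T_0$, and then using a twist formula followed by a change of recoupling basis. I will treat the two formulas in parallel, with the calculation already sketched in the paragraphs preceding the theorem serving as the computational skeleton.

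\textbf{Step 1: Localization of the braid action.} Gluing $A_{k-t,k+t}$ on top of $\beta_{T_0}$ produces a clasped $A_2$ web in which the $(k+t)$-th strand links the $(k-t)$-th strand once, with all intermediate strands passing through the resulting loop. Since $T_0$ was chosen so that its chords form a nested cascade, these intermediate strands, after being absorbed into the clasps of type $(n,0)$, contribute only through the double clasps of type $(j_{t-1},j_{t-1})$ and $(j_t,j_t)$ that sit directly above the relevant triangle of $T_0$. Using a regular isotopy, I slide the linking subarc downward until the crossings are concentrated in a neighborhood of these two clasps; the rest of $\beta_{T_0}$ is untouched.

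\textbf{Step 2: Apply the twist and recoupling formulas for $A_{k-t,k+t}$.} On the localized picture, the first full twist around the internal strand of color $j_{t-1}$ is removed using Lemma~\ref{twist}, producing the factor $q^{\frac{2}{3}(n^2+3n)-j_{t-1}^2-2j_{t-1}}$. Next, I expand the resulting $A_2$ web by the first Recoupling Theorem (Theorem~\ref{recoupling}), summing over an intermediate color $a$ with coefficient $\begin{Bmatrix}n^{-}&n^{+}&(a,a)\\ n^{-}&n^{+}&(j_{t-1},j_{t-1})\end{Bmatrix}$. A second application of Lemma~\ref{twist} to the loop of color $a$ yields the factor $q^{-a^2-2a}$. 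A final recoupling in the opposite direction, with summation variable $j'_t$ and coefficient $\begin{Bmatrix}n^{-}&n^{+}&(j'_t,j'_t)\\ n^{-}&n^{+}&(a,a)\end{Bmatrix}$, rewrites the web in the triangle-free basis and gives a linear combination of $\beta_{T_0}(j_0,\dots,j_{t-1},j'_t,j_{t+1},\dots,j_{k-2})$ as claimed.

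\textbf{Step 3: The second formula.} For $A_{k-t,k+t-1}$ the local picture near the two clasps is the one treated in Lemma~\ref{twitwist}: the strand of color $j_{t-1}$ now crosses through the double clasp of type $(j_t,j_t)$ in a way which, after the second Recoupling Theorem, yields the coefficient $\begin{Bmatrix}n^{+}&j_{t-1}^{\pm}&n\\ n^{-}&j_{t}^{\mp}&n\end{Bmatrix}$. Applying Lemma~\ref{twitwist} then produces the factor $q^{j_{t-1}^2+2j_{t-1}}$, and Lemma~\ref{twist} accounts for the compensating factor $q^{-\frac{2}{3}(n^2+3n)+j_t^2+2j_t}$ coming from undoing the twist on the outside strand of color $n$. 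The key point, which makes this case diagonal in the basis, is that the corresponding recoupling coefficient cancels via the two-dimensional one-dimensional bigon identity of Lemma~\ref{bibigon}, leaving $\beta_{T_0}(j_0,\dots,j_{k-2})$ unchanged up to the scalar displayed.

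\textbf{Main obstacle.} The routine computations are essentially carried out already in the displayed chains of equalities just before the theorem. The real subtlety is the bookkeeping: one has to verify that the two clasps of colors $j_{t-1}$ and $j_t$ really are the only clasps modified by the slide-down isotopy, that the orientation conventions fit so Lemmas~\ref{twist} and~\ref{twitwist} apply verbatim, and that the resulting intermediate web is genuinely of the form to which the Recoupling Theorem is applicable (i.e.\ the outer strands meet the inner clasps with the correct color pattern). Once this local combinatorial picture is fixed, the two formulas follow by substitution.
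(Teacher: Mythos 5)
Your proposal follows essentially the same route as the paper: slide the linking arc of $A_{k-t,k+t}$ (resp.\ $A_{k-t,k+t+1}$) down to the two internal clasps of colors $j_{t-1}$ and $j_t$, then alternate the twist formulas with the Recoupling Theorem, exactly as in the displayed chains of equalities preceding the theorem. The only slips are attributional: the factor $q^{-a^2-2a}$ comes from Lemma~\ref{twitwist} (not a second use of Lemma~\ref{twist}, which would introduce an extra $q^{\mp\frac{2}{3}(n^2+3n)}$), and the disappearance of the quantum $6j$ symbol in the second formula is due to undoing the one-dimensional recoupling of Theorem~\ref{recoupling}, not to Lemma~\ref{bibigon}.
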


Finally, 
we define a certain deformation of colored triangle-free triangulation $T_0$. 
Let $s_i$ be a left action on the colored triangle-free triangulations of $(D,Q_{2k})$ which flips $i$-th chord for $i=0,1,\dots,k-2$. 
It is easy to confirm that $s^m$ act on $T_0$ as a sequence of flips of triangle-free triangulations where $s=s_{2k}\dots s_2s_0s_{2k-1}\dots s_3s_1$. 
Moreover, 
$s^mT_0$ is a colored triangle-free triangulation obtained by shifting each chord $q_lq_{l'}$ to $q_{l+m}q_{l'+m}$ where the indices are modulo $2k$.
We denote it by $T_m=s^mT_0$. 
A colored triangle-free triangulation determines a triangle-free basis of $W_{2k}(n^\pm)$.
We take two colored triangle-free triangulation $T$ and $T'$ such that $T'=s_iT$.
Then, for $i'=0,1,\dots,k$, 
\[
\beta_{T}(j_0,j_1,\dots,j_{k-2})
=\sum_{j'_i=0}^{n}
\begin{Bmatrix}
n^{-}&n^{+}&(j'_{i'},j'_{i'})\\
n^{-}&n^{+}&(j_{i'},j_{i'})
\end{Bmatrix}
\beta_{T'}(j_0,\dots,j_{i'}-1,j'_{i'},j_{{i'}+1},\dots,j_{k-2}),
\]
where $i=2i'$ and
\[
\beta_{T}(j_0,j_1,\dots,j_{k-2})
=\begin{Bmatrix}
n^{+}&j_{i'}^{\pm}&n\\
n^{-}&j_{{i'}+1}^{\mp}&n
\end{Bmatrix}
\beta_{T'}(j_0,j_1,\dots,j_{k-2}),
\]
where $i=2i'+1$.
It defines the matrix $S_i$ such that $\mathcal{B}_{T}=\mathcal{B}_{T'}S_i$.
Let us denote the matrix representation of $s$ by $S=S_{2k}\dots S_2S_0S_{2k-1}\dots S_3S_1$.
\begin{LEM}\label{shift}
For any $i$ and $j$ such that $0\leq i<j\leq 2k$, 
\begin{align*}
\rho_n^{T_0}(A_{i,j})&=S^{-(t'-k)}\rho_n^{T_0}(A_{k-t,k+t})S^{t'-k}\quad\text{if $i+j$ is even, }\\
\rho_n^{T_0}(A_{i,j})&=S^{-(t'-k)}\rho_n^{T_0}(A_{k-t,k+t+1})S^{t'-k}\quad\text{if $i+j$ is odd.}
\end{align*}
where $t=\lfloor (j-i)/2\rfloor$ and $t'=\lfloor (i+j)/2\rfloor$.
\end{LEM}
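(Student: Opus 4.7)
The plan is to exploit the cyclic rotational symmetry of $(D,P,Q_{2k})$. Let $\tau\colon D\to D$ denote the homeomorphism that cyclically rotates the boundary marked points by $q_{l}\mapsto q_{l+1}$ and $P^{(l)}\mapsto P^{(l+1)}$, with indices taken modulo $2k$. This induces a linear automorphism $R=\tau_{*}$ of $W_{2k}(n^{\pm})$, and by the very construction of $T_{m}=s^{m}T_{0}$ as a shift of every chord by $m$ positions, $R$ carries $\beta_{T_{0}}(j_{0},\dots,j_{k-2})$ to $\beta_{T_{1}}(j_{0},\dots,j_{k-2})$, so more generally $R^{m}$ bijects $\mathcal{B}_{T_{0}}$ with $\mathcal{B}_{T_{m}}$ in a label-preserving way.

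First I would identify the matrix of $R$ in the basis $\mathcal{B}_{T_{0}}$. Combining the pointwise identity $R(\beta_{T_{0}}(\mathbf{j}))=\beta_{T_{1}}(\mathbf{j})$ with the change-of-basis relation $\mathcal{B}_{T_{0}}=\mathcal{B}_{T_{1}}S$ established immediately before the lemma, one reads off $[R]_{\mathcal{B}_{T_{0}}}=S^{-1}$, hence $[R^{m}]_{\mathcal{B}_{T_{0}}}=S^{-m}$. Next, because $\rho_{n}$ is defined topologically by gluing a braid diagram on top of an $A_{2}$ web in $(D,P)$, it is equivariant with respect to $R$: for every pure braid $g$ one has $R\circ\rho_{n}(g)\circ R^{-1}=\rho_{n}(\tau g\tau^{-1})$. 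Direct inspection of the braid diagrams shows that conjugation by $\tau^{m}$ sends $A_{k-t,k+t}$ (resp.\ $A_{k-t,k+t+1}$) to $A_{k-t+m,k+t+m}$ (resp.\ $A_{k-t+m,k+t+m+1}$), viewed as an element of $P_{2k}$ with cyclic strand relabeling.

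Setting $m=t'-k$, in the even case $i+j=2t'$ one checks $i=t'-t=k-t+m$ and $j=t'+t=k+t+m$, so $\tau^{m}A_{k-t,k+t}\tau^{-m}=A_{i,j}$; the odd case $i+j=2t'+1$ is the same computation with $A_{k-t,k+t+1}$. Consequently
\[
\rho_{n}(A_{i,j})=R^{m}\circ\rho_{n}(A_{k-t,k+t})\circ R^{-m}
\]
in the even case, and analogously with $A_{k-t,k+t+1}$ in the odd case. Passing to matrices in $\mathcal{B}_{T_{0}}$ and using $[R^{m}]=S^{-m}$ then yields the claimed conjugation formulas. The main obstacle will be the careful verification of the equivariance statement on generators, especially when the rotation forces the strand indices of $A_{k-t,k+t}$ to wrap around modulo $2k$: one must confirm that the rotated strand diagram really represents the standard generator $A_{i,j}$ of $P_{2k}$ rather than a conjugate of it by some stray crossings near $q_{0}$. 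This is most cleanly handled by drawing the rotated braid and isotoping it into the form $\sigma_{j-1}^{-1}\cdots\sigma_{i}^{2}\cdots\sigma_{j-1}$ of the defining presentation.
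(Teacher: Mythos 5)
Your proposal is correct and follows essentially the same route as the paper: the paper likewise observes that $\rho_n(A_{i,j})\beta_{T_{t'-k}}$ coincides with $\rho_n(A_{k-t,k+t})\beta_{T_0}$ (resp.\ $A_{k-t,k+t+1}$) "by rotation" and then conjugates by the change-of-basis matrix $S^{t'-k}$ relating $\mathcal{B}_{T_0}$ and $\mathcal{B}_{T_{t'-k}}$. Your version merely makes the rotation operator $R$ and the equivariance $R\rho_n(g)R^{-1}=\rho_n(\tau g\tau^{-1})$ explicit, which is a welcome clarification but not a different argument.
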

\begin{proof}
The action $s^m$ on $T_0$ shifts the chords of it $m$ times clockwise. 
We denoted it by $T_m$.
A clasped $A_2$ web $\rho_n(A_{i,j})\beta_{T_{t'-k}}(j_0,j_1,\dots,j_{k-2})$ coincides with $\rho_n(A_{k-t,k+t})\beta_{T_0}(j_0,j_1,\dots,j_{k-2})$ or $\rho_n(A_{k-t,k+t})\beta_{T_0}(j_0,j_1,\dots,j_{k-2})$ by rotation. 
Therefor, $\rho_n^{T_{t'-k}}(A_{i,j})=\rho_n^{T_0}(A_{k-t,k+t})$ if $i+j$ is even and $\rho_n^{T_{t'-k}}(A_{i,j})=\rho_n^{T_0}(A_{k-t,k+t+1})$ if $i+j$ is odd.
As a result, 
\begin{align*}
\rho_n(A_{i,j})\mathcal{B}_{T_0}
&=\rho_n(A_{i,j})\mathcal{B}_{T_{t'-k}}S^{t'-k}\\
&=\mathcal{B}_{T_{t'-k}}\rho_n^{T_{t'-k}}(A_{i,j})S^{t'-k}\\
&=\mathcal{B}_{T_{t'-k}}\rho_n^{T_0}(A_{k-t,k+t})S^{t'-k}\\
&=\mathcal{B}_{T_0}S^{-(t'-k)}\rho_n^{T_0}(A_{k-t,k+t})S^{t'-k}\\
\end{align*}
if $i+j$ is even. 
By the same way, 
\[
\rho_n(A_{i,j})\mathcal{B}_{T_0}=\mathcal{B}_{T_0}S^{-(t'-k)}\rho_n^{T_0}(A_{k-t,k+t+1})S^{t'-k}
\] if $i+j$ is odd.
\end{proof}

Consequently, 
we can determine the matrix representation with respect to $T_0$ for any generators by Theorem~\ref{AT0} and Lemma~\ref{shift} for any generators..
\bibliographystyle{amsalpha}
\bibliography{PureBraidA2rep}

\def\cprime{$'$}
\providecommand{\bysame}{\leavevmode\hbox to3em{\hrulefill}\thinspace}
\providecommand{\MR}{\relax\ifhmode\unskip\space\fi MR }
\providecommand{\MRhref}[2]{%
  \href{http://www.ams.org/mathscinet-getitem?mr=#1}{#2}
}
\providecommand{\href}[2]{#2}
\begin{thebibliography}{BHMV95}

\bibitem[AFR10]{AdinFirerRoichman10}
Ron~M. Adin, Marcelo Firer, and Yuval Roichman, \emph{Triangle-free
  triangulations}, Adv. in Appl. Math. \textbf{45} (2010), no.~1, 77--95.
  \MR{2628788}

\bibitem[Art47]{Artin47}
E.~Artin, \emph{Theory of braids}, Ann. of Math. (2) \textbf{48} (1947),
  101--126. \MR{0019087}

\bibitem[BHMV95]{BlanchetHabeggerMasbaumVogel95}
C.~Blanchet, N.~Habegger, G.~Masbaum, and P.~Vogel, \emph{Topological quantum
  field theories derived from the {K}auffman bracket}, Topology \textbf{34}
  (1995), no.~4, 883--927. \MR{1362791}

\bibitem[Bir74]{Birman74}
Joan~S. Birman, \emph{Braids, links, and mapping class groups}, Princeton
  University Press, Princeton, N.J.; University of Tokyo Press, Tokyo, 1974,
  Annals of Mathematics Studies, No. 82. \MR{0375281}

\bibitem[Jon85]{Jones85}
V.~F.~R. Jones, \emph{A polynomial invariant for knots via von {N}eumann
  algebras}, Bull. Amer. Math. Soc. (N.S.) \textbf{12} (1985), no.~1, 103--111.
  \MR{766964}

\bibitem[Jon87]{Jones87}
\bysame, \emph{Hecke algebra representations of braid groups and link
  polynomials}, Ann. of Math. (2) \textbf{126} (1987), no.~2, 335--388.
  \MR{908150}

\bibitem[Kau87]{Kauffman87}
Louis~H. Kauffman, \emph{State models and the {J}ones polynomial}, Topology
  \textbf{26} (1987), no.~3, 395--407. \MR{899057}

\bibitem[KL94]{KauffmanLins94}
Louis~H. Kauffman and S{\'o}stenes~L. Lins, \emph{Temperley-{L}ieb recoupling
  theory and invariants of {$3$}-manifolds}, Annals of Mathematics Studies,
  vol. 134, Princeton University Press, Princeton, NJ, 1994. \MR{1280463
  (95c:57027)}

\bibitem[Kup96]{Kuperberg96}
Greg Kuperberg, \emph{Spiders for rank {$2$} {L}ie algebras}, Comm. Math. Phys.
  \textbf{180} (1996), no.~1, 109--151. \MR{1403861}

\bibitem[Lic93]{Lickorish93B}
W.~B.~R. Lickorish, \emph{Skeins and handlebodies}, Pacific J. Math.
  \textbf{159} (1993), no.~2, 337--349. \MR{1214075}

\bibitem[Mas17]{Masbaum17}
Gregor Masbaum, \emph{On powers of half-twists in {$M(0,2n)$}}, Glasg. Math. J.
  (2017), 1--6.

\bibitem[OY97]{OhtsukiYamada97}
Tomotada Ohtsuki and Shuji Yamada, \emph{Quantum {${\rm SU}(3)$} invariant of
  {$3$}-manifolds via linear skein theory}, J. Knot Theory Ramifications
  \textbf{6} (1997), no.~3, 373--404. \MR{1457194}

\bibitem[Rob94]{Roberts94}
Justin Roberts, \emph{Skeins and mapping class groups}, Math. Proc. Cambridge
  Philos. Soc. \textbf{115} (1994), no.~1, 53--77. \MR{1253282}

\bibitem[Sty15]{Stylianakis15}
Charalampos Stylianakis, \emph{The normal closure of a power of a half-twist
  has infinite index in the mapping class group of a punctured sphere},
  arXiv:1511.02912 (2015).

\bibitem[Yua17]{Yuasa17b}
Wataru Yuasa, \emph{A {$q$}-series identity via the {$\mathfrak{sl}_3$} colored
  {J}ones polynomials for the {$(2,2m)$}-torus link}, to appear in Proceedings
  of the American Mathematical Society (arXiv:1612.02144) (2017).

\end{thebibliography}
\end{document}